\numberwithin{equation}{section}
\numberwithin{figure}{section}
\theoremstyle{plain}
\newtheorem{thm}{\protect\theoremname}[section]
  \theoremstyle{plain}
  \newtheorem{lem}[thm]{\protect\lemmaname}
\newcommand{\ds}{\displaystyle}
\def\R{\mathbb R}
\numberwithin{equation}{section}
\theoremstyle{definition}
\newtheorem{rem}[thm]{Remark}
\newtheorem{prop}[thm]{Proposition}
  \providecommand{\lemmaname}{Lemma}
\providecommand{\theoremname}{Theorem}
\begin{document}
\title[a Nonlinear Schr\"{o}dinger equation with
  non-symmetric electromagnetic fields]
{Infinitely many solutions for a nonlinear Schr\"{o}dinger equation with
  non-symmetric electromagnetic fields}

\author{Weiming Liu,\,\,\,\,Chunhua Wang}

\date{}
\address{[Weiming Liu] School of Mathematics and Statistics, Central China Normal University, Wuhan, 430079, P. R. China }

\email{[Weiming Liu] whu.027@163.com}

\address{[Chunhua Wang] School of Mathematics and Statistics, Central China Normal University, Wuhan, 430079, P. R. China}

\email{[Chunhua Wang] chunhuawang@mail.ccnu.edu.cn}

\begin{abstract}
In this paper, we study the nonlinear  Schr\"{o}dinger equation with non-symmetric electromagnetic fields
$$
\Big(\ds\frac{\nabla}{i}-A_{\epsilon}(x)\Big)^{2}u+V_{\epsilon}(x)u=f(u),~~~~~~u\in H^{1}(\R^{N},\mathbb{C}),
$$
where $A_{\epsilon}(x)=(A_{\epsilon,1}(x),A_{\epsilon,2}(x),\cdots,A_{\epsilon,N}(x))$ is a magnetic field satisfying that $A_{\epsilon,j}(x)(j=1,\ldots,N)$ is a real $C^{1}$  bounded function on $\mathbb{R}^{N}$ and $V_{\epsilon}(x)$
is an electric potential. Both of them  satisfy some decay conditions and $f(u)$ is a superlinear nonlinearity satisfying some non-degeneracy condition. Applying two times finite reduction methods and localized energy method, we prove that there exists some $\epsilon_{0 }> 0$ such that for $0 < \epsilon <
\epsilon_{0 }$, the above problem has infinitely many complex-valued solutions.

{\bf Keywords:} \,\,Electromagnetic fields; Finite reduction method; Localized energy method; Nonlinear Schr\"{o}dinger equation;
Non-symmetric.

{\bf Mathematics Subject Classification:}\,\,35J10, 35B99, 35J60.

\end{abstract}

\maketitle
{\small

\smallskip

\tableofcontents{}

\section{Introduction and main result}\label{s1}

In this paper, we investigate the existence of standing waves
$\psi(x,t)=e^{-\frac{iEt}{\hbar}}u(x)$, $E\in \R,u: \R^{N}\rightarrow\mathbb{C}$
to the time-dependent nonlinear Schr\"{o}dinger equation
with an external electromagnetic field
\begin{equation}\label{1.1}
i \hbar \frac{\partial \psi}{\partial t}=\Big(\ds\frac{\hbar}{i}\nabla -A(x)\Big)^{2}\psi+G(x)\psi-f(x,\psi),\,\,x\in \R^{N},
\end{equation}
which arises in various physical contexts such as nonlinear optics
 or plasma physics where one simulates the interaction effect among
 many particles by introducing a nonlinear term (see \cite{ss}). The function
 $\psi(x,t)$ takes on complex values, $\hbar$ is the Planck constant,
 $i$ is the imaginary unit. Here $A$ denotes a magnetic potential and the
  Schr\"{o}dinger operator is defined by
$$
\Big(\ds\frac{\hbar}{i}\nabla-A(x)\Big)^{2}\psi: =-\hbar^{2}\Delta \psi-\frac{2\hbar}{i}A\cdot\nabla\psi +|A|^{2}\psi-\frac{\hbar}{i}\psi div A.
$$

Actually, in general dimension, the magnetic field $B$ is a 2-form where
 $B_{k,j}=\partial_{j}A_{k}-\partial_{k}A_{j}$; in the case $N=3, B=curl A$.
 The function $G$ represents an electric potential.

  Assuming $f(x,e^{i \theta }u)=e^{i \theta}f(x,u),\theta\in \R^{1}$ and substituting
 this ansatz $\psi(x,t)=e^{-\frac{iEt}{\hbar}}u(x)$ into \eqref{1.1},
  one is led to solve the complex semilinear elliptic equation
\begin{equation}\label{1.2}
\Big(\ds\frac{\hbar}{i}\nabla-A(x)\Big)^{2}u+(G(x)-E)u=f(x,u),\,\,\,\,x\in\R^{N}.
\end{equation}

For simplicity, let $V(x)=(G(x)-E)$ and assume that $V$ is strictly positive
 on the whole space $\R^{N}.$ The transition from quantum mechanics to classical
mechanics can be formally described by letting $\hbar\rightarrow0$, and thus the existence of solutions for $\hbar$ small has physical interest.
Standing waves for $\hbar$ small are usually referred
 as semi-classical bound states (see \cite{h}).

When $A(x)\equiv 0$, problem \eqref{1.2} arises in various applications, such as chemotaxis, population genetics, chemical reactor theory, and the study of
standing waves of certain nonlinear Schr\"{o}dinger equations.
In recent years, a considerable amount of work has been devoted to study
wave solutions of \eqref{1.2} with $A(x)\equiv 0.$ Among of them, we refer to
\cite{bl1,cps,df,df1,dn,fw,l1,l2,o,r,w}. Recently, in \cite{aw}, Ao and Wei applying localized energy method obtained infinitely many positive solutions for \eqref{1.2} with non-symmetric potential.

On the contrary, there are
still relatively few papers which
deal with the case $A(x)\not\equiv 0$,
namely when a magnetic field is present. The first result on magnetic nonlinear Schr\"{o}dinger
equation is due to Esteban and Lions in \cite {el}. They obtained
the existence of standing waves to \eqref{1.2} for~$\hbar $~fixed and for special classes of
magnetic fields by solving an appropriate minimization problem for the corresponding energy functional in the cases of $N=2,3.$
In \cite{ct}, Cao and Tang constructed semiclassical multi-peak solutions for \eqref{1.2} with bounded vector potentials.
In \cite{cs1}, using a
penalization procedure, Cingolani and Secchi extended the result in \cite{cs} to the case of a vector potential $A$,
 possibly unbounded. The penalization approach was also used in \cite{bdp} by Bartsch, Dancer
and Peng to obtain multi-bump semiclassical bound for problem \eqref{1.2} with more general nonlinear term $f(x,u)$.
In \cite{k}, Kurata proved the existence of least energy solution of \eqref{1.2}
 for $\hbar>0$ under a condition relating $V(x)$ and $A(x)$. In \cite{h,h1},
Helffer studied asymptotic behavior of the eigenfunctions of the Schr\"{o}dinger
 operators with magnetic fields in the semiclassical limit. See also
 \cite{b} for generalization of the results and in \cite{hs} for potentials
 which degenerate at infinity. In \cite{lpw}, Li, Peng and Wang applied the finite reduction method to
 obtain infinitely many non-radial complex valued solutions for \eqref{1.2} with
 radial electromagnetic fields satisfying some algebraic decaying conditions.
Liu and Wang in \cite {lw} extends the result to some weaker symmetric conditions.
In \cite{pw}, Pi and Wang obtained multi-bump solutions for \eqref{1.2} with $\hbar=1,f(x,u)= |u|^{p-2}u$ and an electrical potential satisfying a
condition by applying the finite reduction method.

In this paper, inspired by \cite{aw,wy1}, our main idea is to use the Lyapunov-Schmidt reduction method. We want to point out that the only assumption we need is
the non-degeneracy of the bump. We have no requirements on the structure of the nonlinearity.

If $\hbar=1$, $A(x)=A_{0}+\epsilon \tilde{A}(x)$, $V(x)=1+\epsilon \tilde{V}(x)$ and $f(x,u)=f(u)$, then (1.2) is
reduced to the following complex problem
$$
\Big(\ds\frac{\nabla}{i}-A_{0}-\epsilon \tilde{A}(x)\Big)^{2}u+(1+\epsilon \tilde{V}(x))u=f(u),~~~~~~u\in H^{1}(\R^{N},\mathbb{C}).
$$
For simplicity of notations, in the sequel, we denote
$$
A_{\epsilon}(x)=A_{0}+\epsilon \tilde{A}(x)\,\,\,\,\,\text{and}\,\,\,\,V_{\epsilon}(x)=1+\epsilon \tilde{V}(x).
$$
Then we are concerned with the following problem
\begin{equation}\label{1.3}
\Big(\ds\frac{\nabla}{i}-A_{\epsilon}(x)\Big)^{2}u+V_{\epsilon}(x)u=f(u),~~~~~~u\in H^{1}(\R^{N},\mathbb{C}).
\end{equation}

In order to state our main result, we give the conditions imposed on $\tilde{A}(x)$, $\tilde{V}(x)$ and $f$: \\

$(A_{1})$ $\lim\limits_{|x|\rightarrow \infty} |\tilde{A}(x)|=0$; \\

$(A_{2})$ $\exists
 0<\alpha_{1}<1$,  $\lim\limits_{|x|\rightarrow \infty} |\tilde{A}(x)|^{2}e^{\alpha_{1}|x|}=+\infty$; \\

 $(A_{3})$ $\exists
 0<\alpha_{2}<1$,  $\lim\limits_{|x|\rightarrow \infty} |div \tilde{A}(x)|^{2}e^{\alpha_{2}|x|}=+\infty$; \\

 $(A_{4})$ $\lim\limits_{|x|\rightarrow \infty} |\nabla \tilde{A}(x)|=0$;\\

$(V_{1})$ $\tilde{V}(x)\in C(\R^{N},\R)$ and $\lim\limits_{|x|\rightarrow \infty} |\tilde{V}(x)|=0$; \\

$(V_{2})$ $\exists
 0<\alpha_{3}<1$,  $\lim\limits_{|x|\rightarrow \infty} |\tilde{V}(x)|e^{\alpha_{3}|x|}=+\infty$; \\

 $(f_{1})$ $f : \mathbb{C} \rightarrow \mathbb{C}$ is of class $C^{1+\delta}$ for some $0 <\delta \leq 1$,~$f'(0)=0$;\\

$(f_{2})$ $f(e^{i\theta}u)=e^{i\theta}f(u),\theta\in \R^{1};$\\

 $(f_{3})$
 The equation
\begin{eqnarray}\label{1.4}
\left\{%
\begin{array}{ll}
   \ds -\Delta w+w=f(w),& w>0 ~\text{in}~ \mathbb{R}^{N}, \vspace{0.2cm}\\
\ds \lim_{|x|\rightarrow \infty}w(x)=0,& w(0)=\ds\max_{x\in \mathbb{R}^{N}}w(x),
\end{array}
\right.
\end{eqnarray}
has a non-degenerate solution $w$, i.e.,
$$
ker(\Delta-1+f'(w))\cap L^{\infty}(\R^{N})=span\Big\{\frac{\partial w}{\partial x_{1}},\ldots,\frac{\partial w}{\partial x_{N}}\Big\}.
$$

Particularly, $f(u)=|u|^{p-1}u$ satisfies $ (f_{2}).$\\

Under the above assumptions, the spectrum of the linearized operator
$$
\Delta
\varphi -\varphi + f'(w)\varphi = \lambda\varphi, ~~\varphi \in H^{1}(\mathbb{R}^{N})
$$
admits the following decompositions
$$
\lambda_{1}>\lambda_{2}>\ldots>\lambda_{n}>\lambda_{n+1}=0>\lambda_{n+2}
$$
where each of the eigenfunction corresponding to the positive eigenvalue $\lambda_{j}$ decays exponentially.
These eigenfunctions will play an important role in our secondary Lyapunov-Schmidt
reduction(see Section 3 below).

\begin{rem}\label{rem1.4}
It is easy to find that $w$ is a solution of \eqref{1.4} if and
only if  $e^{iA_{0}\cdot x}w$ is a solution of the following problem
\begin{eqnarray}\label{ea0}
\left\{%
\begin{array}{ll}
   \ds \Big(\frac{\nabla}{i} -A_{0}\Big)^{2}u+u=f(u),& x\in\mathbb{R}^{N}, \vspace{0.2cm}\\
\ds \lim_{|x|\rightarrow \infty}|u(x)|=0,& |u(0)|=\ds\max_{x\in \mathbb{R}^{N}}|u(x)|,
\end{array}
\right.
\end{eqnarray}
from
which and $(f_{3})$ we can deduce that  \eqref{ea0} has a non-degenerate solution $e^{i \sigma+iA_{0}\cdot x}w,$
i.e.
$$
ker\Big(-\Big(\frac{\nabla}{i}-A_{0}\Big)^{2}-1+f'(w)\Big)=span\Big\{\frac{\partial (e^{i \sigma+iA_{0}\cdot x}w)}{\partial x_{1}},\ldots,\frac{\partial (e^{i \sigma+iA_{0}\cdot x}w)}{\partial x_{N}},\frac{\partial (e^{i \sigma+iA_{0}\cdot x}w)}{\partial \sigma}\Big\}.
$$

\end{rem}

In the sequel, the Sobolev space $H^{1}(\mathbb{R}^{N})$ is endowed with the standard norm
$$
\|u\|=\Bigl(\int|\nabla u|^{2}+|u|^{2}\Bigl)^{\frac{1}{2}},
$$
which is induced by the inner product
$$
\left\langle \nabla u,\nabla v\right\rangle =\int(\nabla u\nabla v+uv).
$$

Denote $\alpha=\min\{\alpha_{1},\alpha_{2},\alpha_{3}\}.$

Our main result of this paper is as follows:

\begin{thm}\label{thm1.1}
Assume that $(A_{1})$-$(A_{4})$, $(V_{1})$-$(V_{2})$ and $(f_{1})$-$(f_{3})$ hold.
Then there exists $\epsilon_{0} > 0$ such that $0 < \epsilon < \epsilon_{0}$,
problem (1.3) has infinitely many complex-valued solutions.
\end{thm}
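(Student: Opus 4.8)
The plan is to construct the infinitely many solutions as multi-bump configurations whose bumps are placed at points $\xi_1,\dots,\xi_k$ going to infinity, using the magnetic ground state $U_{\xi,\sigma}:=e^{i\sigma+iA_0\cdot(x-\xi)}w(\cdot-\xi)$ from Remark~\ref{rem1.4} as the building block. Because the electromagnetic data $\tilde A,\tilde V$ are \emph{not} symmetric, one cannot reduce to a symmetric subspace; instead, following Ao--Wei and the localized energy method, the idea is a \emph{two-step} Lyapunov--Schmidt reduction. First I would fix $k$ and, for each $\epsilon$ small, look for a solution of the form $u=\sum_{j=1}^k U_{\xi_j,\sigma_j}+\phi$ with $\phi$ small and orthogonal (in the real-Hilbert-space sense coming from the inner product on $H^1(\R^N,\C)$) to the approximate kernel spanned by $\partial_{x_\ell}U_{\xi_j,\sigma_j}$ and $\partial_{\sigma}U_{\xi_j,\sigma_j}$. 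The nondegeneracy hypothesis $(f_3)$, transferred to the magnetic operator via Remark~\ref{rem1.4}, guarantees that the linearized operator $L:=-(\tfrac{\nabla}{i}-A_\epsilon)^2-V_\epsilon+f'(U)$ restricted to this orthogonal complement is uniformly invertible once the bumps are well separated; a contraction-mapping argument then produces a unique $\phi=\phi(\xi,\sigma,\epsilon)$ solving the auxiliary equation, with quantitative estimates $\|\phi\|\lesssim \epsilon$ plus exponentially small interaction and error terms controlled by the decay rates $\alpha_1,\alpha_2,\alpha_3$ in $(A_2),(A_3),(V_2)$.

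Next I would carry out the reduced (finite-dimensional) problem. Plugging $\phi(\xi,\sigma)$ back in, the original PDE is solvable iff the gradient of the reduced energy
$$
\mathcal{E}(\xi,\sigma)=I_\epsilon\Bigl(\sum_{j=1}^k U_{\xi_j,\sigma_j}+\phi(\xi,\sigma)\Bigr)
$$
in the $(\xi,\sigma)$ variables vanishes. Here the phases $\sigma_j$ decouple nicely: because of $(f_2)$, $I_\epsilon$ is invariant under the global $U(1)$ action, so the $\sigma$-derivative equations are essentially automatically satisfied (or reduce to a trivial algebraic condition), and the real content is in the $\xi_j$-equations. Expanding $\mathcal{E}$ one finds, up to higher-order terms, a main part
$$
\mathcal{E}(\xi,\sigma)= k\,c_0 + \sum_{j=1}^k \Psi_\epsilon(\xi_j) + (\text{bump-bump interactions}) + o(\text{main terms}),
$$
where $c_0$ is the single-bump magnetic energy and $\Psi_\epsilon$ collects the contributions of $\epsilon\tilde A$ and $\epsilon\tilde V$ at the bump location. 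The decay assumptions $(A_1),(A_4),(V_1)$ make $\Psi_\epsilon(\xi_j)\to k c_0$ as $|\xi_j|\to\infty$, while the \emph{lower} bounds $(A_2),(A_3),(V_2)$ guarantee that $\Psi_\epsilon$ decays \emph{slower} than the exponentially small bump-bump interaction $\sim e^{-|\xi_i-\xi_j|}$; this separation of scales is exactly what makes a genuinely non-symmetric max--min (or topological degree) argument work.

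I would then set up a max--min scheme for $\mathcal{E}$ on a suitable configuration space: place the $k$ points on a large sphere of radius $R=R(\epsilon)\to\infty$ (or along well-chosen rays), constrain their mutual distances to an annulus $[\rho,2\rho]$ with $\rho\to\infty$, and show that an interior critical point exists because on the boundary of this configuration set the energy is strictly dominated — the interaction term blows up or the decaying potential term forces the energy down, depending on which part of the boundary one is on. The key point, using $(A_2)$-$(A_3)$-$(V_2)$, is that the ``good'' slowly-decaying term beats the ``bad'' exponentially-small interaction, so the critical point is stable and nondegenerate enough to survive the error terms from $\phi$. Since $k$ was arbitrary, letting $k\to\infty$ (with $\epsilon_0$ possibly depending on $k$, or by a diagonal argument fixing one sequence $\epsilon_n\to0$ that works for all $k\le n$) yields solutions with arbitrarily many bumps, hence infinitely many geometrically distinct complex-valued solutions of \eqref{1.3}.

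I expect the main obstacle to be the reduced-energy expansion and the boundary analysis of the max--min: one must compute $\Psi_\epsilon(\xi)$ and the interaction term to high enough precision that the competition between the algebraically/slowly decaying magnetic--electric contributions and the exponentially decaying bump interactions is resolved uniformly in $k$ and in the positions $\xi_j$, and one must handle the coupling introduced by the magnetic phase $e^{iA_0\cdot x}$ (which, unlike the $A=0$ case, means the ``translates'' are not literal translates and the cross terms carry oscillatory factors). The delicate bookkeeping of these exponentially small remainders against the lower bounds in $(A_2),(A_3),(V_2)$ — and ensuring the implicit $\phi$-corrections do not destroy the sign of the leading terms on the configuration boundary — is where the real work lies; the first-step reduction itself is by now standard given the non-degeneracy in $(f_3)$.
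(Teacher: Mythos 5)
Your plan captures the right scaffolding (magnetic bumps $U_{\xi,\sigma}$, the first Lyapunov--Schmidt step using the non-degeneracy from $(f_3)$ and Remark~\ref{rem1.4}, a reduced-energy variational argument, and the competition between the slow decay of $\tilde A, \tilde V$ and the exponentially small bump interactions), but the endgame has a genuine gap: you do not obtain the uniformity in the number of bumps that the theorem actually asserts. The conclusion is that there is a single $\epsilon_0$ such that for each fixed $\epsilon<\epsilon_0$ problem \eqref{1.3} already has infinitely many solutions; you explicitly allow $\epsilon_0$ to depend on $k$, or propose a diagonal sequence $\epsilon_n\to 0$ handling $k\le n$, and either option produces only finitely many solutions for any given $\epsilon$. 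Your configuration space compounds this: placing bumps on a sphere of radius $R(\epsilon)\to\infty$ with mutual distances in an annulus $[\rho,2\rho]$, $\rho\to\infty$, lets the relevant smallness thresholds degrade as $k$ grows, so no uniform $\epsilon_0$ can emerge from that scheme.

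The paper removes this obstruction by (i) fixing the minimal separation $\mu$ in $\Omega_m$ once and for all, (ii) working in the weighted norm $\|f\|_*$ of \eqref{2.2} built from $E(x)=\sum_j e^{-\gamma|x-Q_j|}$ and invoking the counting bound of Lemma~\ref{number} so that all constants in Propositions~\ref{prop2.2}--\ref{prop2.3} are independent of $m$ and $\textbf{Q}_m$, and (iii) running an induction on $m$: the reduced energy $\mathcal{M}(\sigma,\textbf{Q}_m)$ is maximized over $[0,2\pi]\times\bar\Omega_m$ and Lemma~\ref{lem4.1} proves the strict gain $\mathcal{C}_{m+1}>\mathcal{C}_m+I(z)$ along with the boundary estimate forcing the maximizer into the interior, where all $c_{j,k}$ vanish. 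The engine of that induction is what the paper actually calls the \emph{secondary} Lyapunov--Schmidt reduction (Section~3, Lemma~\ref{lem3.2}): the correction $\phi_{m+1}=u_{\textbf{Q}_{m+1}}-(u_{\textbf{Q}_m}+z_{Q_{m+1}})$ is decomposed further along the localized eigenfunctions of the \emph{positive} eigenvalues $\lambda_1,\dots,\lambda_n$ of the linearization, yielding an $H^1$ estimate on $\phi_{m+1}$ with error terms summable in $m$. What you describe as a ``two-step'' reduction (solve for $\phi$, then solve the reduced equations) is just the ordinary single reduction; the genuinely second reduction controlling the accumulated error from step $m$ to $m+1$ is absent from your proposal, and without it the uniform-in-$m$ maximization argument that makes $\epsilon_0$ independent of the number of bumps cannot be closed.
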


In the following, we sketch  the main idea in the proof of Theorem 1.2.

We introduce some notations first. Let $\mu > 0$ be a real number such that $w(x) \leq ce^{-|x|}$ for $|x| > \mu$ and some constant $c$ independent of $\mu$
large. Now we define the configuration space
$$
\Omega_{1} = \mathbb{R}^{N}, \Omega_{m}:= \Bigl\{ \textbf{Q}_{m} = (Q_{1}, Q_{2},\ldots,
Q_{m})\in\mathbb{R}^{mN}:~\min_{k\neq j}|Q_{k} - Q_{j} |\geq \mu \Bigl\} , \forall m
> 1.
$$

Let $w$ be the non-degenerate solution of \eqref{1.4} and $m \geq1$ be an integer. Define the sum of $m$ spikes as
$$
w_{Q_{j}}=w(x-Q_{j}),\,\xi_{j}=e^{i\sigma+iA_{0}\cdot(x-Q_{j})},\,
z_{Q_{j}}=\xi_{j}w(x-Q_{j})\,\,\text{and}\,\,z_{ \textbf{Q}_{m}}=\sum_{j=1}^{m}z_{Q_{j}},
 $$
where $\sigma\in [0, 2\pi]$.

Let the operator be
$$
\mathcal {S}(u)=-\Bigl(\frac{\nabla}{i}-A_{\epsilon}(x)\Bigl)^{2}u-V_{\epsilon}(x)u+f(u).
$$
Fixing $(\sigma,\textbf{Q}_{m}) = (\sigma,Q_{1},\ldots, Q_{m})\in [0,2\pi]\times\Omega_{m},$ we define the following functions as the approximate kernels:
$$
D_{j,k}=\frac{\partial (e^{i\sigma+iA_{0}\cdot(x-Q_{j})}w_{Q_{j}})}{\partial x_{k}}\eta_{j}(x),~\text{for}~j=1,\ldots,m,k=1,\ldots,N$$
and
$$
D_{j,N+1}=\frac{\partial (e^{i\sigma+iA_{0}\cdot(x-Q_{j})}w_{Q_{j}})}{\partial \sigma}\eta_{j}(x),j=1,\ldots,m,
$$
where $\eta_{j}(x)=\eta(\frac{2|x-Q_{j}|}{\mu-1})$ and $\eta(t)$ is a cut off function, such that $\eta(t)=1$ for $|t|\leq1$ and $\eta(t)=0$ for $|t|\geq
\frac{\mu^{2}}{\mu^{2}-1}$. Note that the support of $D_{j,k}$ belongs to $B_{\frac{\mu^{2}}{2(\mu+1)}}(Q_{j})$.

Applying $z_{\textbf{Q}_{m}}$as the approximate solution and performing the Lyapunov-Schmidt reduction, we can show that there exists a constant $\mu_{0}$,
such that for $\mu\geq\mu_{0}$, and $\epsilon<c_{\mu}$, for some constant $c_{\mu}$ depending on $\mu$ but independent of $m$ and $\textbf{Q}_{m}$, we can find
a $\varphi_{\sigma,\textbf{Q}_{m}}$ such that
$$
\mathcal {S}(z_{\textbf{Q}_{m}}+\varphi_{\sigma,\textbf{Q}_{m}})=\sum_{j=1}^{m}\sum_{k=1}^{N+1}c_{j,k}D_{j,k},
$$
and we can show that $\varphi_{\sigma,\textbf{Q}_{m}}$ is $C^{1}$ in $(\sigma,\textbf{Q}_{m}).$ This is done in Section 2.

After that, for any $m$, we define a new function
\begin{equation}\label{m}
\mathcal {M}(\sigma,\textbf{Q}_{m})=J(z_{\textbf{Q}_{m}}+\varphi_{\sigma,\textbf{Q}_{m}}),
\end{equation}
we maximize $\mathcal
{M}(\sigma,\textbf{Q}_{m})$ over $[0,2\pi]\times \bar{\Omega}_{m}.$

At the maximum point of $\mathcal {M}(\sigma,\textbf{Q}_{m}),$ we show that $c_{j,k}=0$
for all $j,k.$ Therefore we prove that the corresponding $z_{\textbf{Q}_{m}}+\varphi_{\sigma,\textbf{Q}_{m}}$ is a solution of
\eqref{1.3}. By the arguments before, we know that there exists $\mu_{0}$ large such that $\mu\geq \mu_{0}$ and
$\epsilon\leq c_{\mu}$ and for any $m$, there exists a spike solution to \eqref{1.3} with $m$ spikes in $\Omega_{m}$.
Considering that $m$ is arbitrary, then there exists infinitely many spikes solutions for $\epsilon< c_{\mu_{0}}$ independent of $m.$

There are three main difficulties in the maximization process. Firstly, we need to show that the maximum points will not go to infinity.  Secondly, we have to
detect the difference in the energy when the spikes move to the boundary of the configuration space. In the second step, we use the induction method and
detect the difference of the m-th spikes energy and the (m+1)-th spikes energy. A crucial estimate is Lemma 3.2, where we prove that the accumulated error can be
controlled from step $m$ to step $m + 1$. To this end, we make a secondary Lyapunov-Schmidt reduction. This is done in Section 3.
Compared with \cite{aw}, since there is a magnetic filed in our problem, we have to overcome some new difficulties which involves many technical estimates.

Our paper is organized as follows. In section 2, we carry out Lyapunov-Schmidt reduction. Then we perform a second Liapunov-Schmidt reduction in section 3.
Finally, we prove our main result in section 4.

\textbf{Notations:}

1. We simply write $\int f$ to mean the Lebesgue integral of $f(x)$ in $\R^{N}.$

2. The complex conjugate of any number $z\in\mathbb{C}$ will be denoted by $\bar{z}$.

3. The real part of a number $z\in\mathbb{C}$ will be denoted by $Re z$.

4. The ordinary inner product between two vectors $a,b\in \R^{N}$ will be denoted by $a\cdot b$.

{\bf Acknowledgements:}
  This paper was partially supported by NSFC (No.11301204; No.11371159), self-determined research funds of CCNU from
  the colleges' basic research and operation of MOE (CCNU14A05036).

\section{Finite-dimensional reduction}\label{s2}

In this section, we perform a finite-dimensional reduction.

Let $\gamma\in(0, 1)$ and we define
\begin{equation}\label{2.1}
E(\cdot):= \sum_{j=1}^{m} e^{-\gamma|\cdot-Q_{j}|},\,\,\,\,\text{where}\,\,\,\, \textbf{Q}_{m}\in\Omega_{m}.
\end{equation}

Consider the norm
\begin{equation}\label{2.2}
 \|f\|_{*}= \sup_{x\in\mathbb{R}^{N}}| E(x)^{-1}f(x)|,
\end{equation}
which was first introduced in \cite{mpw} and also used in \cite{aw,wy1}.
Now we investigate
\begin{eqnarray}\label{2.3}
\left\{%
\begin{array}{ll}
   L(\varphi_{\sigma,\textbf{Q}_{m}}):=-\Bigl(\ds \frac{\nabla}{i}-A_{\epsilon}(x)\Bigl)^{2}\varphi_{\sigma,\textbf{Q}_{m}}-V_{\epsilon}(x)\varphi_{\sigma,\textbf{Q}_{m}}
   +f'(z_{\textbf{Q}_{m}})\varphi_{\sigma,\textbf{Q}_{m}} \vspace{0.2cm}\\
=h+\ds\sum_{j=1}^{m}\sum_{k=1}^{N+1}c_{j,k}D_{j,k},~in~\mathbb{R}^{N},\vspace{0.2cm}\\
\ds Re\int\varphi_{\sigma,\textbf{Q}_{m}}\bar{D}_{j,k}=0~for~j=1,\ldots,m,k=1,\ldots,N+1.
\end{array}
\right.
\end{eqnarray}

Firstly, we give a result which will be used later.
\begin{lem}\label{number}(\cite{dwy}, Lemma 3.4)
There exists a constant $C_{N}=6^{N}$ such that for any $m\in \mathbb{N}^{+}$ and any
$\textbf{Q}_{m}=(Q_{1},Q_{2},...,Q_{m})\in \R^{mN},$
\begin{equation}\label{s1.1}
\sharp\Big\{Q_{j}\Big| \frac{l}{2}\mu\leq|x-Q_{j}|<\frac{(l+1)}{2}\mu\Big\}\leq C_{N}(l+1)^{N-1}
\end{equation}
for all $x\in \R^{N}$ and all $l\in \mathbb{N}.$ Particularly, we have
\begin{equation}\label{s1.2}
\sharp\Big\{Q_{j}\Big|0\leq|x-Q_{j}|<\frac{\mu}{2}\Big\}\leq C_{N}.
\end{equation}
\end{lem}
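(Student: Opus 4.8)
The statement to prove is the purely combinatorial Lemma \ref{number}: a uniform bound on the number of points $Q_j$ of an arbitrary configuration that can lie in a half-open annulus of width $\mu/2$ centered at an arbitrary point $x$. The plan is a volume-packing argument. Fix $x \in \R^N$ and $l \in \N$, and let $S = \{Q_j : \frac{l}{2}\mu \le |x - Q_j| < \frac{l+1}{2}\mu\}$. The key geometric input is \emph{not} that the $Q_j$ lie in $\Omega_m$ (we do not even need that here, since the lemma is stated for arbitrary $\textbf{Q}_m \in \R^{mN}$) — rather, I would note that in the intended application the points are $\mu$-separated, but since the lemma as phrased allows arbitrary points, the counting must be interpreted with multiplicity collapsing; actually the cleanest route is: assume the $Q_j$ are \emph{distinct} (or count distinct ones), and observe they need only be separated enough for packing. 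Let me instead run the argument that is robust: I will show that the balls $B_{\mu/4}(Q_j)$ for $Q_j \in S$, \emph{if the $Q_j$ are $\mu$-separated}, are pairwise disjoint and all contained in the annulus $\{y : \frac{l}{2}\mu - \frac{\mu}{4} \le |x-y| \le \frac{l+1}{2}\mu + \frac{\mu}{4}\}$. Disjointness follows since $|Q_k - Q_j| \ge \mu > \mu/2$. Containment: if $|y - Q_j| < \mu/4$ then $|x-y| \le |x - Q_j| + \mu/4 < \frac{l+1}{2}\mu + \frac{\mu}{4}$ and $|x - y| \ge |x-Q_j| - \mu/4 \ge \frac{l}{2}\mu - \frac{\mu}{4}$.

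Then a comparison of Lebesgue measures gives
$$
\sharp S \cdot \omega_N \Big(\frac{\mu}{4}\Big)^N \;\le\; \omega_N\Big[\Big(\tfrac{l+1}{2}\mu + \tfrac{\mu}{4}\Big)^N - \Big(\tfrac{l}{2}\mu - \tfrac{\mu}{4}\Big)^N\Big],
$$
where $\omega_N$ is the volume of the unit ball. Dividing by $\omega_N (\mu/4)^N$ and cancelling $\mu^N$ reduces everything to a scale-free estimate:
$$
\sharp S \;\le\; \Big(2(l+1) + 1\Big)^N - \Big(2l - 1\Big)^N \;=\; (2l+3)^N - (2l-1)^N.
$$
Now I would bound $(2l+3)^N - (2l-1)^N$ by $C_N (l+1)^{N-1}$. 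By the mean value theorem, $(2l+3)^N - (2l-1)^N = 4N\,\xi^{N-1}$ for some $\xi \in (2l-1, 2l+3)$, hence at most $4N (2l+3)^{N-1} \le 4N \cdot 3^{N-1}(l+1)^{N-1} \le 6^N (l+1)^{N-1}$ for $N \ge 1$ (checking the small-$l$ and $l=0$ cases directly: when $l=0$, $S \subset B_{\mu/2}(x)$ roughly, and the count is bounded by $3^N \le 6^N$, matching \eqref{s1.2}). The constant $C_N = 6^N$ then absorbs these estimates. The case $l = 0$ gives \eqref{s1.2} directly, since $\{Q_j : 0 \le |x - Q_j| < \mu/2\}$ has the balls $B_{\mu/4}(Q_j)$ packed into $B_{3\mu/4}(x)$, yielding $\sharp \le 3^N \le 6^N = C_N$.

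The main obstacle — and the only point requiring care — is the interpretation of the hypothesis: the lemma as stated allows \emph{arbitrary} $\textbf{Q}_m \in \R^{mN}$ with no separation, in which case the left side could be as large as $m$ if many $Q_j$ coincide. I expect the intended reading (consistent with \cite{dwy} and the usage in this paper) is that one counts the $Q_j$ \emph{with the $\mu$-separation $\min_{k \ne j}|Q_k - Q_j| \ge \mu$ in force}, i.e. $\textbf{Q}_m \in \Omega_m$; under that reading the packing argument above is complete and the rest is the elementary inequality $(2l+3)^N - (2l-1)^N \le 6^N (l+1)^{N-1}$. If instead one insists on no separation, the statement is simply false, so I would state the separation hypothesis explicitly at the start of the proof and proceed as above. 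Either way, the heart of the matter is the disjoint-ball packing into a thin annulus followed by the one-line calculus estimate on the difference of $N$-th powers.
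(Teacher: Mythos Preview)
The paper does not prove this lemma at all: it is quoted verbatim from \cite{dwy}, Lemma~3.4, and used as a black box. So there is no ``paper's own proof'' to compare against; your proposal is an independent argument.

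Your packing argument is correct and is in fact the standard proof of this type of result. The balls $B_{\mu/4}(Q_j)$ are disjoint under the separation $|Q_k-Q_j|\ge\mu$, they sit in the enlarged annulus, and the volume comparison together with the mean-value estimate $(2l+3)^N-(2l-1)^N\le 4N(2l+3)^{N-1}\le 4N\cdot 3^{N-1}(l+1)^{N-1}$ does give the constant $6^N$ once one checks $4N\le 3\cdot 2^N$ for all $N\ge 1$. The $l=0$ case you handle separately is also fine.

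You are entirely right to flag the hypothesis issue: as literally stated (``any $\textbf{Q}_m\in\R^{mN}$'') the lemma is false, since one can pile all $m$ points at a single location. The separation condition $\min_{k\ne j}|Q_k-Q_j|\ge\mu$ (i.e.\ $\textbf{Q}_m\in\Omega_m$) is implicitly assumed throughout the paper and in \cite{dwy}, and every application of the lemma here is to configurations in $\Omega_m$. Stating that hypothesis explicitly at the outset, as you propose, is the correct fix; with it your proof is complete.
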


\begin{lem}\label{lem2.1}
Let $h$ with $\|h\|_{*}$ bounded and assume that $(\varphi_{\sigma,\textbf{Q}_{m}}, {c_{j,k}})$ is a solution to \eqref{2.3}. Then there exist positive numbers $\mu_{0}$
and $C$, such that for all $0 < \epsilon < e^{-2\mu},$ $\mu>\mu_{0}$ and $(\sigma,\textbf{Q}_{m})\in [0, 2\pi]\times \Omega_{m},$  one has
\begin{equation}\label{2.4}
 \|\varphi_{\sigma,\textbf{Q}_{m}}\|_{*}\leq
C\|h\|_{*},
\end{equation}
where $C$ is a positive constant independent of $\mu,m$ and $\textbf{Q}_{m}\in\Omega_{m}$.
\end{lem}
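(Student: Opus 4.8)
The plan is to establish the a priori estimate \eqref{2.4} by a contradiction-compactness argument, exploiting the spectral non-degeneracy hypothesis $(f_3)$ together with the weighted norm $\|\cdot\|_*$ that is built precisely to capture the exponential decay of $m$ superposed spikes. First I would suppose the estimate fails: then there are sequences $\mu_n\to\infty$, $\epsilon_n<e^{-2\mu_n}$, $(\sigma_n,\textbf{Q}_m^{(n)})$ (with $m=m_n$ possibly varying), and solutions $(\varphi_n,c_{j,k}^{(n)})$ of \eqref{2.3} with $\|\varphi_n\|_*=1$ but $\|h_n\|_*\to 0$. The orthogonality conditions $\mathrm{Re}\int\varphi_n\bar D_{j,k}=0$ are kept in force. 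The goal is to derive $\|\varphi_n\|_*\to 0$, a contradiction.

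The core of the argument is a localization near the spikes. Because $\|\varphi_n\|_*=1$, for each $n$ there is a point where $|E(x)^{-1}\varphi_n(x)|$ is close to $1$; using the decay built into $E$ and Lemma \ref{number} (which bounds the number of spikes that can cluster near any given point), one shows this near-maximum must be attained within a bounded distance of some spike $Q_{j_n}$ — away from all spikes the equation forces $\varphi_n$ to be small relative to $E$ by a maximum-principle / comparison argument using that $V_\epsilon\approx 1>0$ and $f'(z_{\textbf{Q}_m})$ is small off the spikes (since $f'(0)=0$ by $(f_1)$ and $w$ decays). Translating so that $Q_{j_n}\to 0$ and passing to a subsequence, $\varphi_n\rightharpoonup\varphi_\infty$ in $H^1_{loc}$, where $\varphi_\infty$ solves the limiting linearized equation $-(\frac{\nabla}{i}-A_0)^2\varphi_\infty-\varphi_\infty+f'(w)\varphi_\infty=0$ on $\mathbb{R}^N$ (the $\epsilon$-terms and $\tilde A,\tilde V$ disappear since $\epsilon_n\to0$ and $(A_1),(V_1)$). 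By Remark \ref{rem1.4}, the kernel of this operator is spanned by the $D_{j,k}$-type functions $\partial_{x_k}(e^{i\sigma+iA_0\cdot x}w)$ and $\partial_\sigma(e^{i\sigma+iA_0\cdot x}w)$; but the orthogonality conditions survive the limit (the cutoffs $\eta_j\to1$ locally), forcing $\varphi_\infty=0$. One then upgrades this to a contradiction with $\|\varphi_n\|_*=1$: the near-maximum of $|E^{-1}\varphi_n|$ near the distinguished spike tends to $0$ by local elliptic estimates, while the contribution away from all spikes was already controlled, so $\|\varphi_n\|_*\to0$.

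There remains the handling of the multipliers $c_{j,k}^{(n)}$: one tests \eqref{2.3} against each $D_{j,k}$, integrates, and uses that the Gram-type matrix $\mathrm{Re}\int D_{j,k}\bar D_{j',k'}$ is, after normalization, close to a fixed invertible block-diagonal matrix (each block being the Gram matrix of the translates/phase-derivative of $w$, invertible by linear independence of the kernel elements), with off-diagonal blocks exponentially small in $\mu$ thanks to the separation $|Q_k-Q_j|\ge\mu$. This yields $|c_{j,k}^{(n)}|\lesssim \|h_n\|_* + (\text{small})\|\varphi_n\|_*$ locally near $Q_j$, small enough to be absorbed. I expect the main obstacle to be exactly this bookkeeping: making the contradiction argument uniform in $m$ (the number of spikes) and in $\textbf{Q}_m\in\Omega_m$, since the weighted norm couples all the spikes through $E(x)=\sum_j e^{-\gamma|x-Q_j|}$. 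The resolution is that $\gamma\in(0,1)$ is chosen strictly less than the decay rate $1$ of $w$, so that the "tails" of far-away spikes are dominated by the weight $E$; combined with the counting estimate \eqref{s1.1}, all the cross terms sum to something uniformly bounded (indeed small for $\mu$ large), which is what allows the constant $C$ to be taken independent of $\mu$, $m$, and $\textbf{Q}_m$. The condition $\epsilon<e^{-2\mu}$ is used to guarantee the $\epsilon$-perturbation terms $\epsilon\tilde A\cdot\nabla\varphi$, $\epsilon^2|\tilde A|^2\varphi$, $\epsilon(\mathrm{div}\,\tilde A)\varphi$, $\epsilon\tilde V\varphi$ are likewise negligible against $E$ uniformly.
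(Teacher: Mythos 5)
Your proposal matches the paper's argument in all essentials: the contradiction/blow-up setup with $\|\varphi_n\|_*=1$, $\|h_n\|_*\to 0$; the barrier role of $E(x)$ away from the spikes (the paper's estimate \eqref{l}--\eqref{2.21}); the translation and local elliptic compactness to a limit $\varphi^\infty$ in the kernel of $-(\nabla/i-A_0)^2-1+f'(w)$; the use of Remark \ref{rem1.4} plus the surviving orthogonality conditions to force $\varphi^\infty=0$; and the estimate $|c_{j,k}|\lesssim e^{-\beta\mu/2}\|\varphi\|_*+\|h\|_*$ by testing against $D_{j,k}$ (the paper gets this by integrating $L$ by parts onto $D_{j,k}$ and using that $\widetilde D_{j,k}$ annihilates the unperturbed linearization, which is what makes your "near-diagonal Gram system" invertible with small coupling). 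The proof is the same in substance and structure.
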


\begin{proof}
We prove it by contradiction. Assume that there exists a solution $\varphi_{\sigma,\textbf{Q}_{m}}$ to \eqref{2.3}  and $\|h\|_{*}\rightarrow0$,
$\|\varphi_{\sigma,\textbf{Q}_{m}}\|_{*}=1$.

Multiplying the equation in \eqref{2.3}  by $\bar{D}_{j,k}$ and integrating in $\mathbb{R}^{N}$, we get
\begin{equation}\label{2.5}
Re\int L(\varphi_{\sigma,\textbf{Q}_{m}})\bar{D}_{j,k}=Re\int h\bar{D}_{j,k}+c_{j,k}\int|D_{j,k}|^{2}.
\end{equation}
Considering the exponential decay at infinity of $\frac{\partial w(x)}{\partial x_{k}}$ and the definition of $D_{j,k}(k=1,\ldots,N+1)$, we have
\begin{equation}\label{2.6}
\begin{array}{ll}
&\ds\int|D_{j,k}|^{2}=\ds\int\Bigl|\bigl(iA_{0,k}z_{Q_{j}}+\frac{\partial w_{Q_{j}}}{\partial x_{k}}\xi_{j}\bigl)\eta_{j}\Bigl|^{2}\vspace{0.2cm}\\
&\ds
=\int A_{0,k}^{2}w_{Q_{j}}^{2}\eta^{2}\Bigl(\frac{2|x-Q_{j}|}{\mu-1}\Bigl)+\int \Bigl(\frac{\partial w_{Q_{j}}}{\partial x_{k}}\Bigl)^{2}\eta^{2}\Bigl(\frac{2|x-Q_{j}|}{\mu-1}\Bigl)\vspace{0.2cm}\\
&=
\ds A_{0,k}^{2}\int w^{2}+A_{0,k}^{2}\int_{B^{C}_{\frac{\mu-1}{2}}(0)}\Bigl[\eta^{2}\Bigl(\frac{2|x|}{\mu-1}\Bigl)-1\Bigl]w^{2}
\vspace{0.2cm}\\
&\quad+\ds\int
\Bigl(\frac{\partial w}{\partial x_{k}}\Bigl)^{2}+\ds\int_{B^{C}_{\frac{\mu-1}{2}}(0)}\Bigl[\eta^{2}\Bigl(\frac{2|x|}{\mu-1}\Bigl)-1\Bigl)\Bigl]\Bigl(\frac{\partial w}{\partial x_{k}}\Bigl)^{2}\vspace{0.2cm}\\
&=
 A_{0,k}^{2}\ds\int w^{2}+\int \Bigl(\frac{\partial w}{\partial x_{k}}\Bigl)^{2}+O(e^{-\mu}),~as~\mu\rightarrow+\infty,~k=1,2,\ldots,N
\end{array}
\end{equation}
and
\begin{equation}\label{2.7}
\begin{array}{ll}
\ds\int|D_{j,N+1}|^{2}&=\ds\int\Bigl|iz_{Q_{j}}\eta\Bigl(\frac{2|x-Q_{j}|}{\mu-1}\Bigl)\Bigl|^{2}=\int\Bigl|w_{Q_{j}}\eta\Bigl(\frac{2|x-Q_{j}|}{\mu-1}\Bigl)\Bigl|^{2}\vspace{0.2cm}\\
&
=\ds\int
w^{2}+\int_{B^{C}_{\frac{\mu-1}{2}}(0)}\Bigl[\eta^{2}\Bigl(\frac{2|x|}{\mu-1}\Bigl)-1\Bigl]w^{2}
=\ds\int w^{2}+O(e^{-\mu}),~as~\mu\rightarrow+\infty.
\end{array}
\end{equation}

On the other hand, by Lemma \ref{lemw} we have
\begin{equation}\label{2.8}
\begin{array}{ll}
\ds\Bigl|Re\int h\bar{D}_{j,k}\Bigl|
&=\ds\Bigl|Re\int h\bigl(-iA_{0,k}\bar{z}_{Q_{j}}+\frac{\partial w_{Q_{j}}}{\partial x_{k}}\bar{\xi}_{j}\bigl)\eta_{j}\Bigl|        \vspace{0.2cm}\\
&\leq
\ds\int|h||A_{0,k}|w_{Q_{j}}|\eta_{j}|+\int|h|\Big|\frac{\partial w_{Q_{j}}}{\partial x_{k}}\Big||\eta_{j}|\vspace{0.2cm}\\
&\leq
C\|h\|_{*}\ds\int_{B_{\frac{\mu^{2}}{2(\mu+1)}}(Q_{j})}|A_{0,k}|\sum_{j=1}^{m}e^{-\gamma|x-Q_{j}|}w(x-Q_{j})
\Bigl|\eta\Bigl(\frac{2|x-Q_{j}|}{\mu-1}\Bigl)\Bigl|\vspace{0.2cm}\\
&\ds\,\,\,\,\,\,+C\|h\|_{*}\int_{B_{\frac{\mu^{2}}{2(\mu+1)}}(Q_{j})}\sum_{j=1}^{m} e^{-\gamma|x-Q_{j}|}\Bigl|\frac{\partial w(x-Q_{j})}{\partial x_{k}}\Bigl|\Bigl|\eta\Bigl(\frac{2|x-Q_{j}|}{\mu-1}\Bigl)\Bigl|\vspace{0.2cm}\\
\end{array}
\end{equation}
\begin{equation*}
\begin{array}{ll}
&\leq
C\|h\|_{*}\ds\int_{B_{\frac{\mu}{2}}(Q_{j})}e^{-\gamma|x-Q_{j}|}w(x-Q_{j})+C\|h\|_{*}\int_{B_{\frac{\mu}{2}}(Q_{j})} e^{-\gamma|x-Q_{j}|}\Bigl|\frac{\partial w(x-Q_{j})}{\partial x_{k}}\Bigl|\vspace{0.2cm}\\
&\leq
C\|h\|_{*}\ds\int_{0}^{\frac{\mu}{2}}e^{-(1+\gamma) t}t^{N-1}dt
\ds\leq C\|h\|_{*},\quad\quad\quad\quad k=1,2,\ldots,N
\end{array}
\end{equation*}
and
\begin{equation}\label{2.9}
\begin{array}{ll}
\ds\Bigl|Re\int h\bar{D}_{j,N+1}\Bigl|&\ds\leq\int|h|||\bar{D}_{j,N+1}|\leq\int|h|||iz_{Q_{j}}\eta_{j}|\vspace{0.2cm}\\
&\leq C\|h\|_{*}\ds\int\sum_{j=1}^{m} e^{-\gamma|x-Q_{j}|}w(x-Q_{j})\eta\Bigl(\frac{2|x-Q_{j}|}{\mu-1}\Bigl)
\vspace{0.2cm}\\
&\leq C\|h\|_{*}\ds\int_{B_{\frac{\mu}{2}}(Q_{j})} e^{-\gamma|x-Q_{j}|}w(x-Q_{j})
\vspace{0.2cm}\\
&\leq C\|h\|_{*}\ds\int_{0}^{\frac{\mu}{2}}e^{-(1+\gamma) t}t^{N-1}dt
\leq C\|h\|_{*}.
\end{array}
\end{equation}

Here and in what follows, $C$ stands for a positive constant independent of $\epsilon$ and $\mu$, as $\epsilon\rightarrow0$. Now if we write
$\widetilde{D}_{j,k} = \frac{\partial (e^{i\sigma+iA_{0}\cdot(x-Q_{j})}w_{Q_{j}})}{ \partial x_{k}}$ , then we have
\begin{equation}\label{2.10}
\begin{array}{ll}
&\ds Re\int L(\varphi_{\sigma,\textbf{Q}_{m}})\bar{D}_{j,k}=Re\int L(D_{j,k})\bar{\varphi}_{\sigma,\textbf{Q}_{m}}\vspace{0.2cm}\\
&=
Re\ds\int\Bigl[-\Bigl(\frac{\nabla}{i}-A_{\epsilon}(x)\Bigl)^{2}D_{j,k}\bar{\varphi}_{\sigma,\textbf{Q}_{m}}-V_{\epsilon}(x) D_{j,k}\bar{\varphi}_{\sigma,\textbf{Q}_{m}}+f'(z_{\textbf{Q}_{m}})D_{j,k}\bar{\varphi}_{\sigma,\textbf{Q}_{m}}\Bigl]\vspace{0.2cm}\\
&=
Re\ds\int\Bigl[-\Bigl(\frac{\nabla}{i}-A_{0}\Bigl)^{2}
D_{j,k}\bar{\varphi}_{\sigma,\textbf{Q}_{m}}-D_{j,k}\bar{\varphi}_{\sigma,\textbf{Q}_{m}}+f'(z_{Q_{j}})D_{j,k}\bar{\varphi}_{\sigma,\textbf{Q}_{m}}\Bigl]\vspace{0.2cm}\\
&\,\,\,\,\,\,
-Re\ds\int\epsilon \tilde{V} D_{j,k}\bar{\varphi}_{\sigma,\textbf{Q}_{m}}+Re\int[f'(z_{\textbf{Q}_{m}})
-f'(z_{Q_{j}})]D_{j,k}\bar{\varphi}_{\sigma,\textbf{Q}_{m}}\vspace{0.2cm}\\
&\,\,\,\,\,\,
+Re\ds\int\Bigl(\frac{\epsilon}{i}div\tilde{A}-2\epsilon A_{0}\cdot \tilde{A}-\epsilon^{2}|\tilde{A}|^{2}\Bigl)D_{j,k}\bar{\varphi}_{\sigma,\textbf{Q}_{m}}+Re\int\frac{2\epsilon}{i}\tilde{A}(x)\cdot\nabla D_{j,k}\bar{\varphi}_{\sigma,\textbf{Q}_{m}}\vspace{0.2cm}\\
&\leq
Re\ds\int_{B_{\frac{\mu^{2}}{2(\mu+1)}}(Q_{j})}\Bigl[-\Bigl(\frac{\nabla}{i}-A_{0}\Bigl)^{2}
\widetilde{D}_{j,k}-\widetilde{D}_{j,k}+f'(z_{Q_{j}})\widetilde{D}_{j,k}\Bigl]\eta_{j}\bar{\varphi}_{\sigma,\textbf{Q}_{m}}\vspace{0.2cm}\\
&\,\,\,\,\,\,
+Re\ds\int_{B_{\frac{\mu^{2}}{2(\mu+1)}}(Q_{j})\backslash B_{\frac{\mu-1}{2}}(Q_{j})}
\Bigl[\widetilde{D}_{j,k}\Delta\eta_{j}+2\nabla\eta_{j}\cdot\nabla\widetilde{D}_{j,k}
+\frac{2}{i}A_{0}\cdot\nabla\eta_{j}\widetilde{D}_{j,k}\Bigl]\bar{\varphi}_{\sigma,\textbf{Q}_{m}}\vspace{0.2cm}\\
&\,\,\,\,\,\,
-Re\ds\int_{B_{\frac{\mu^{2}}{2(\mu+1)}}(Q_{j})}\epsilon \tilde{V}
\widetilde{D}_{j,k}\eta_{j}\bar{\varphi}_{\sigma,\textbf{Q}_{m}}+Re\int_{B_{\frac{\mu^{2}}{2(\mu+1)}}(Q_{j})}[f'(z_{\textbf{Q}_{m}})
-f'(z_{Q_{j}})]\widetilde{D}_{j,k}\eta_{j}\bar{\varphi}_{\sigma,\textbf{Q}_{m}}\vspace{0.2cm}\\
&\,\,\,\,\,\,
+Re\ds\int_{B_{\frac{\mu^{2}}{2(\mu+1)}}(Q_{j})}\Bigl(\frac{\epsilon}{i}div\tilde{A}-2\epsilon A_{0}\cdot
\tilde{A}-\epsilon^{2}|\tilde{A}|^{2}\Bigl)\widetilde{D}_{j,k}\eta_{j}\bar{\varphi}_{\sigma,\textbf{Q}_{m}}\\
&\,\,\,\,\,\,
+Re\ds\int_{B_{\frac{\mu^{2}}{2(\mu+1)}}(Q_{j})}\frac{2\epsilon}{i}\tilde{A}(x)\cdot\nabla
\widetilde{D}_{j,k}\eta_{j}\bar{\varphi}_{\sigma,\textbf{Q}_{m}}
+Re\ds\int_{B_{\frac{\mu^{2}}{2(\mu+1)}}(Q_{j})}\frac{2\epsilon}{i}\tilde{A}(x)\cdot\nabla \eta_{j}\widetilde{D}_{j,k}\bar{\varphi}_{\sigma,\textbf{Q}_{m}}.
\end{array}
\end{equation}

Since $$-\Bigl(\frac{\nabla}{i}-A_{0}\Bigl)^{2} \widetilde{D}_{j,k}-\widetilde{D}_{j,k}+f'(z_{Q_{j}})\widetilde{D}_{j,k}=0,$$ we have
\begin{equation}\label{2.11}
Re\int_{B_{\frac{\mu^{2}}{2(\mu+1)}}(Q_{j})}\Bigl[-\Bigl(\frac{\nabla}{i}-A_{0}\Bigl)^{2}
\widetilde{D}_{j,k}-\widetilde{D}_{j,k}+f'(z_{Q_{j}})\widetilde{D}_{j,k}\Bigl]\eta_{j}\bar{\varphi}_{\sigma,\textbf{Q}_{m}}=0.
\end{equation}
Moreover, by Lemma \ref{lemw} we have
\begin{equation}\label{2.12}
\begin{array}{ll}
&\ds\Bigl|Re\int_{B_{\frac{\mu^{2}}{2(\mu+1)}}(Q_{j})\backslash B_{\frac{\mu-1}{2}}(Q_{j})}\Bigl(\widetilde{D}_{j,k}\Delta\eta_{j}+2\nabla\eta_{j}\cdot\nabla\widetilde{D}_{j,k}
+\frac{2}{i}A_{0}\cdot\nabla\eta_{j}\widetilde{D}_{j,k}\Bigl)\bar{\varphi}_{\sigma,\textbf{Q}_{m}}\Bigl|
\vspace{0.2cm}\\
&\leq
C\|\varphi_{\sigma,\textbf{Q}_{m}}\|_{*}\ds\int_{B_{\frac{\mu^{2}}{2(\mu+1)}}(Q_{j})\backslash B_{\frac{\mu-1}{2}}(Q_{j})}
\sum_{j=1}^{m}e^{-\gamma|x-Q_{j}|}\Big(\Big|\frac{\partial w_{Q_{j}}}{\partial x_{k}}\Big|+w_{Q_{j}}+\Big|\nabla w_{Q_{j}}\Big|+\Big|\nabla \frac{\partial w_{Q_{j}}}{\partial x_{k}}\Big|\Big)
\vspace{0.2cm}\\
&\leq
C\|\varphi_{\sigma,\textbf{Q}_{m}}\|_{*}\ds\int^{\frac{\mu^{2}}{2(\mu+1)}}_{\frac{\mu-1}{2}}e^{-(1+\gamma)s}s^{N-1}ds\leq
Ce^{-(1+\beta)\frac{\mu}{2}}\|\varphi_{\sigma,\textbf{Q}_{m}}\|_{*}
\end{array}
\end{equation}
for some $\beta>0$.

Observing that
$$
\Bigl|f'(z_{\textbf{Q}_{m}}) -f'(z_{Q_{j}})\Bigl|\leq C\Bigl|\sum_{k\neq j}z_{Q_{k}}\Bigl|^{\delta},
$$
by $(f_{1})$ we have
\begin{equation}\label{2.13}
\begin{array}{ll}
&\ds \Bigl|Re\int_{B_{\frac{\mu^{2}}{2(\mu+1)}}(Q_{j})}(f'(z_{\textbf{Q}_{m}}) -f'(z_{Q_{j}}))\widetilde{D}_{j,k}\eta_{j}\bar{\varphi}_{\sigma,\textbf{Q}_{m}}\Bigl|\vspace{0.2cm}\\
&\ds\leq C\|\varphi_{\sigma,\textbf{Q}_{m}}\|_{*}\int_{B_{\frac{\mu^{2}}{2(\mu+1)}}(Q_{j})}\Bigl|\sum_{k\neq
j}z_{Q_{k}}\Bigl|^{\delta}\sum_{j=1}^{m} e^{-\gamma|x-Q_{j}|}\Big(\Big|\frac{\partial w_{Q_{j}}}{\partial x_{k}}\Big|+w_{Q_{j}}\Big)\vspace{0.2cm}\\
&\ds\leq C\|\varphi_{\sigma,\textbf{Q}_{m}}\|_{*}\int_{B_{\frac{\mu^{2}}{2(\mu+1)}}(Q_{j})}\sum_{k\neq
j}|w_{Q_{k}}|^{\delta}\sum_{j=1}^{m} e^{-\gamma|x-Q_{j}|}\Big(\Big|\frac{\partial w_{Q_{j}}}{\partial x_{k}}\Big|+w_{Q_{j}}\Big)\vspace{0.2cm}\\
&\ds\leq C\|\varphi_{\sigma,\textbf{Q}_{m}}\|_{*}\int_{B_{\frac{\mu^{2}}{2(\mu+1)}}(Q_{j})}e^{-\frac{\delta}{2}\mu}\sum_{j=1}^{m} e^{-\gamma|x-Q_{j}|}\Big(\Big|\frac{\partial w_{Q_{j}}}{\partial x_{k}}\Big|+w_{Q_{j}}\Big)\vspace{0.2cm}\\
&\ds\leq C\|\varphi_{\sigma,\textbf{Q}_{m}}\|_{*}e^{-\frac{\delta}{2}\mu}\int^{\frac{\mu^{2}}{2(\mu+1)}}_{0}e^{-(1+\gamma)s}s^{N-1}ds\leq
Ce^{-\beta\frac{\mu}{2}}\|\varphi_{\sigma,\textbf{Q}_{m}}\|_{*}
\end{array}
\end{equation}
and
\begin{equation}\label{2.14}
\begin{array}{ll}
&\ds \Bigl|Re\int_{B_{\frac{\mu^{2}}{2(\mu+1)}}(Q_{j})}\epsilon \tilde{V} \widetilde{D}_{j,k}\eta_{j}\bar{\varphi}_{\sigma,\textbf{Q}_{m}}\Bigl|\leq\epsilon
\int_{B_{\frac{\mu^{2}}{2(\mu+1)}}(Q_{j})}
|\tilde{V}| |\widetilde{D}_{j,k}||\bar{\varphi}_{\sigma,\textbf{Q}_{m}}|\vspace{0.2cm}\\
&\ds\leq Ce^{-2\mu}\|\varphi_{\sigma,\textbf{Q}_{m}}\|_{*}\int_{B_{\frac{\mu^{2}}{2(\mu+1)}}(Q_{j})}|\tilde{V}| |\widetilde{D}_{j,k}|\sum_{j=1}^{m} e^{-\gamma|x-Q_{j}|}\vspace{0.2cm}\\
&\ds\leq Ce^{-2\mu}\|\varphi_{\sigma,\textbf{Q}_{m}}\|_{*}\int_{B_{\frac{\mu^{2}}{2(\mu+1)}}(Q_{j})}\Big(\Big|\frac{\partial w_{Q_{j}}}{\partial x_{k}}\Big|+w_{Q_{j}}\Big)\sum_{j=1}^{m} e^{-\gamma|x-Q_{j}|}\vspace{0.2cm}\\
&\ds\leq C\|\varphi_{\sigma,\textbf{Q}_{m}}\|_{*}e^{-\frac{\delta}{2}\mu}\int^{\frac{\mu^{2}}{2(\mu+1)}}_{0}e^{-(1+\gamma)s}s^{N-1}ds\leq
Ce^{-\beta\frac{\mu}{2}}\|\varphi_{\sigma,\textbf{Q}_{m}}\|_{*}.
\end{array}
\end{equation}

Similarly, we can get
\begin{equation}\label{2.15}
\Bigl|Re\int_{B_{\frac{\mu^{2}}{2(\mu+1)}}(Q_{j})}\frac{\epsilon}{i}div\tilde{A} \widetilde{D}_{j,k}\eta_{j}\bar{\varphi}_{\sigma,\textbf{Q}_{m}}\Bigl|\leq
C\epsilon\int_{B_{\frac{\mu^{2}}{2(\mu+1)}}(Q_{j})}|\widetilde{D}_{j,k}||\bar{\varphi}_{\sigma,\textbf{Q}_{m}}|\leq
Ce^{-\beta\frac{\mu}{2}}\|\varphi_{\sigma,\textbf{Q}_{m}}\|_{*},
\end{equation}

\begin{equation}\label{2.16}
\Bigl|Re\int_{B_{\frac{\mu^{2}}{2(\mu+1)}}(Q_{j})}2\epsilon A_{0}\cdot \tilde{A }\widetilde{D}_{j,k}\eta_{j}\bar{\varphi}_{\sigma,\textbf{Q}_{m}}\Bigl|\leq
C\epsilon\int_{B_{\frac{\mu^{2}}{2(\mu+1)}}(Q_{j})}|\widetilde{D}_{j,k}||\bar{\varphi}_{\sigma,\textbf{Q}_{m}}|\leq
Ce^{-\beta\frac{\mu}{2}}\|\varphi_{\sigma,\textbf{Q}_{m}}\|_{*},
\end{equation}

\begin{equation}\label{2.17}
\Bigl|Re\int_{B_{\frac{\mu^{2}}{2(\mu+1)}}(Q_{j})}\epsilon^{2}|\tilde{A}|^{2}\widetilde{D}_{j,k}\eta_{j}\bar{\varphi}_{\sigma,\textbf{Q}_{m}}\Bigl|\leq
C\epsilon\int_{B_{\frac{\mu^{2}}{2(\mu+1)}}(Q_{j})}|\widetilde{D}_{j,k}||\bar{\varphi}_{\sigma,\textbf{Q}_{m}}|\leq
Ce^{-\beta\frac{\mu}{2}}\|\varphi_{\sigma,\textbf{Q}_{m}}\|_{*},
\end{equation}

\begin{equation}\label{2.18}
\Bigl|Re\int_{B_{\frac{\mu^{2}}{2(\mu+1)}}(Q_{j})}\frac{2\epsilon}{i}\tilde{A}(x)\cdot\nabla\eta_{j}\widetilde{D}_{j,k}\bar{\varphi}_{\sigma,\textbf{Q}_{m}}\Bigl|\leq
C\epsilon\int_{B_{\frac{\mu^{2}}{2(\mu+1)}}(Q_{j})}|\widetilde{D}_{j,k}||\bar{\varphi}_{\sigma,\textbf{Q}_{m}}|\leq
Ce^{-\beta\frac{\mu}{2}}\|\varphi_{\sigma,\textbf{Q}_{m}}\|_{*}
\end{equation}
and
\begin{equation}\label{2.19}
\Bigl|Re\int_{B_{\frac{\mu^{2}}{2(\mu+1)}}(Q_{j})}\frac{2\epsilon}{i}\tilde{A}(x)\cdot\nabla\widetilde{D}_{j,k}\eta_{j}\bar{\varphi}_{\sigma,\textbf{Q}_{m}}\Bigl|\leq
C\epsilon\int_{B_{\frac{\mu^{2}}{2(\mu+1)}}(Q_{j})}|\nabla\widetilde{D}_{j,k}||\bar{\varphi}_{\sigma,\textbf{Q}_{m}}|\leq
Ce^{-\beta\frac{\mu}{2}}\|\varphi_{\sigma,\textbf{Q}_{m}}\|_{*},
\end{equation}
for some $\beta>0$.

It follows from \eqref{2.5} to \eqref{2.19} that
\begin{equation}\label{2.20}
|c_{j,k}| \leq C(e^{-\beta\frac{\mu}{2}}\|\varphi_{\sigma,\textbf{Q}_{m}}\|_{*}+\|h\|_{*}).
\end{equation}

 Let now $\theta\in(0, 1)$. It is easy to check that the function $E(x)$ in \eqref{2.1} satisfies
\begin{equation}\label{l}
|-L(E(x))| \geq\frac{1}{2}(1-\theta^{2})E(x), ~in ~\mathbb{R}^{N}\backslash\bigcup_{j=1}^{m}B_{\bar{\mu}}(Q_{j})
 \end{equation}
 provided $\bar{\mu}$ is large enough and $\bar{\mu}\leq \frac{\mu}{2}.$
Indeed, by Lemma \ref{number}  we have
\begin{eqnarray*}
w_{\textbf{Q}_{m}}&\leq&\sum_{|x-Q_{j}|<\frac{1}{2}\mu}w(x-Q_{j})
+\sum_{l=1}^{\infty}\sum_{\frac{l}{2}\mu\leq|x-Q_{j}|<\frac{l+1}{2}\mu}w(x-Q_{j})\\
&\leq&Cw(\bar{\mu})+C\sum_{l=1}^{\infty}l^{N-1}e^{-\frac{l}{2}\mu}
\leq Cw(\bar{\mu}).
\end{eqnarray*}
Then
\begin{equation}\label{ll1}
|f'(z_{\textbf{Q}_{m}})|\leq C(w_{\textbf{Q}_{m}})^{\delta}
\leq Cw^{\delta}(\bar{\mu})\leq \frac{1-\theta^{2}}{4},\,\,\,\text{in}\,\,\,\R^{N}\backslash \cup_{j=1}^{m}B_{\bar{\mu}}(Q_{j}).
\end{equation}
 From \eqref{ll1} and direct computation, we have
\begin{eqnarray*}
|-L(E(x))|&=&\Big|\Big(\frac{\nabla}{i}-A_{\epsilon}(x)\Big)^{2}E(x)+V_{\epsilon}E(x)-
f'(z_{\textbf{Q}_{m}})E(x)\Big|
\\
&=&\sum_{j=1}^{m} \Big|-\gamma^{2}+1+|A_{0}|^{2}+\gamma\frac{(N-1)-2iA_{\epsilon}\cdot (1,1,...,1)}{|x-Q_{j}|}
\\
&&\quad\quad
+\epsilon (\tilde{V}(x)+2A_{0}\cdot \tilde{A}+idiv\tilde{A}+\epsilon|\tilde{A}|^{2})-f'(z_{\textbf{Q}_{m}})\Big|e^{-\gamma|x-Q_{j}|}\\
&\geq & \frac{1}{2}(1-\theta^{2})E(x),\,\,\,\,\text{in}\,\,\,\R^{N}\backslash \cup_{j=1}^{m}B_{\bar{\mu}}(Q_{j}),
\end{eqnarray*}
which yields that \eqref{l} is true.

 Hence the function $E(x)$ can be
used as a barrier to prove the pointwise estimate
\begin{equation}\label{2.21}
|\varphi_{\sigma,\textbf{Q}_{m}}(x)| \leq C\bigl(\|L\varphi_{\sigma,\textbf{Q}_{m}}\|_{*}+\sup_{j}\|\varphi_{\textbf{Q}_{m}(x)}\|_{L^{\infty}(\partial
B_{\bar{\mu}}(Q_{j}))}\bigl)E(x),
\end{equation}
for all $x\in\mathbb{R}^{N}\backslash\bigcup_{j=1}^{m}B_{\bar{\mu}}(Q_{j})$.

Now we prove it by contradiction. We assume that there exist a sequence of $\epsilon$ tending to $0$, $\mu$ tending to $\infty$ and a sequence of solutions
of \eqref{2.3} for which the inequality is not true. The problem being linear, we can reduce to the case where we have a sequence $\epsilon^{(n)}$ tending to $0$,
$\mu^{(n)}$ tending to $\infty$ and sequences $h^{(n)}, \varphi^{(n)}, {c^{(n)}_{j,k}}$ such that
$$
\|h^{(n)}\|\rightarrow0\,\,\,\text{and} \,\,\,
\|\varphi_{\sigma,\textbf{Q}_{m}}^{(n)}\|_{*} = 1.
$$

By \eqref{2.20},we have $$\Bigl\|\sum_{jk}c^{(n)}_{jk}D_{jk}\Bigl\|_{*}\rightarrow0.$$

Then \eqref{2.21} implies that there exists $Q^{(n)}_{j}\in\Omega_{m}$ such that
\begin{equation}\label{2.22}
\|\varphi_{\textbf{Q}_{m}^{(n)}}\|_{L^{\infty}(B_{\frac{\mu}{2}}(Q_{j}^{(n)}))}\geq C
\end{equation}
for some fixed constant $C > 0$.

Applying elliptic estimates together with Ascoli-Arzela's theorem, we can find a sequence
$Q^{(n)}_{j}$ and we can extract, from the sequence $\varphi^{(n)}(\cdot-Q^{(n)}_{j})$ a subsequence which will converge to $\varphi^{\infty}$ a solution of
$$\Bigl[-\bigl(\frac{\nabla}{i}-A_{0}\bigl)^{2}-1+f'(e^{i\sigma+iA_{0}\cdot x}w)\Bigl]\varphi^{\infty} = 0,~ x\in~ \mathbb{R}^{N},$$
which is bounded by a constant times $e^{-\gamma|x|}$, with $\gamma> 0$. Moreover, recall that $\varphi^{(n)}_{\textbf{Q}_{m}}$ satisfies the orthogonality
conditions in \eqref{2.3}. Therefore, the limit function $\varphi^{\infty}$ also satisfies $$Re\int\varphi^{\infty}\overline{\frac{\partial z}{\partial x_{j}}}=0,~j=1,\ldots,N,~\text{~and~}Re\int\varphi^{\infty}\overline{\frac{\partial z}{\partial \sigma}}=0,$$
where $z=e^{i\sigma+iA_{0}\cdot x}w(x)$.

Then we have
that $\varphi^{\infty}\equiv0$ which contradicts to \eqref{2.22}.
\end{proof}

From Lemma \ref{lem2.1}, we can obtain the following result
\begin{prop}\label{prop2.2}
Then there exist positive numbers $\gamma\in(0, 1)$, $\mu_{0}> 0$ and $C > 0$, such that for all $0 < \epsilon< e^{-2\mu}, \mu > \mu_{0}$ and for any given
$h$ with $\|h\|_{*}$ norm bounded, there is a unique solution $(\varphi_{\sigma,\textbf{Q}_{m}}, {c_{j,k}})$ to problem \eqref{2.3}. Moreover,
\begin{equation}\label{2.23}
\|\varphi_{\sigma,\textbf{Q}_{m}}\|_{*}\leq C\|h\|_{*}.
\end{equation}
\end{prop}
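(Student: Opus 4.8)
The plan is to bootstrap existence and uniqueness for \eqref{2.3} from the a priori bound of Lemma \ref{lem2.1} via the Fredholm alternative; the only real content beyond Lemma \ref{lem2.1} is the functional-analytic packaging. \emph{Uniqueness} is immediate: if $(\varphi,c_{j,k})$ and $(\varphi',c_{j,k}')$ both solve \eqref{2.3} with the same $h$, their difference solves \eqref{2.3} with $h\equiv 0$, so $\varphi\equiv\varphi'$ by Lemma \ref{lem2.1}; then $\sum_{j,k}(c_{j,k}-c_{j,k}')D_{j,k}\equiv 0$, and since $\mathrm{supp}\,D_{j,k}\subset B_{\mu^{2}/(2(\mu+1))}(Q_{j})$ so that these supports are disjoint across $j$ (as $\textbf{Q}_{m}\in\Omega_{m}$), while for each fixed $j$ the Gram matrix $\big(Re\int D_{j,k}\bar D_{j,l}\big)_{k,l=1}^{N+1}$ is by \eqref{2.6}--\eqref{2.7} a small perturbation of an invertible diagonal matrix for $\mu$ large, all $c_{j,k}-c_{j,k}'$ vanish. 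Likewise, once a solution is produced, the estimate \eqref{2.23} is nothing but Lemma \ref{lem2.1}, with $C$ already independent of $\mu,m,\textbf{Q}_{m}$.

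For \emph{existence}, fix $m$ and $(\sigma,\textbf{Q}_{m})$ and let $\mathcal{H}\subset H^{1}(\R^{N},\C)$ be the closed real subspace defined by the $m(N+1)$ orthogonality conditions $Re\int\varphi\bar D_{j,k}=0$. On $H^{1}(\R^{N},\C)$ consider the symmetric real-bilinear form $\mathcal{B}(\varphi,\psi)=Re\int\big[(\tfrac{\nabla}{i}-A_{\epsilon})\varphi\,\overline{(\tfrac{\nabla}{i}-A_{\epsilon})\psi}+V_{\epsilon}\varphi\bar\psi\big]$. It is bounded and coercive for $\epsilon$ small, since $A_{\epsilon}$ is bounded, $V_{\epsilon}\ge 1-\epsilon\|\tilde V\|_{\infty}>0$, and $|\nabla\varphi|^{2}\le 2|(\tfrac{\nabla}{i}-A_{\epsilon})\varphi|^{2}+2|A_{\epsilon}|^{2}|\varphi|^{2}$. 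Because $|h|\le\|h\|_{*}E$ with $E\in L^{2}\cap L^{\infty}$ (the $L^{\infty}$ bound coming from Lemma \ref{number}), the map $\psi\mapsto Re\int h\bar\psi$ is a bounded functional on $H^{1}$; and since $|f'(z_{\textbf{Q}_{m}})|\le C(w_{\textbf{Q}_{m}})^{\delta}\to 0$ as $|x|\to\infty$, the operator $\mathcal{K}$ on $H^{1}$ represented by $\psi\mapsto Re\int f'(z_{\textbf{Q}_{m}})\varphi\bar\psi$ is compact (decay of the multiplier plus Rellich). Via Riesz representation and Lax--Milgram, and composing with the orthogonal projection $\Pi:H^{1}\to\mathcal{H}$, problem \eqref{2.3} becomes equivalent to an equation $\varphi-\Pi\mathcal{K}\varphi=\tilde h$ with $\varphi\in\mathcal{H}$, the multipliers $c_{j,k}$ being recovered from the $\mathcal{H}^{\perp}$-component of $L\varphi-h$.

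It then remains to apply the Fredholm alternative: $\Pi\mathcal{K}$ is compact on $\mathcal{H}$, so the displayed equation is solvable for every right-hand side provided the homogeneous equation $\varphi=\Pi\mathcal{K}\varphi$ has only the trivial solution in $\mathcal{H}$. A solution of the homogeneous equation is exactly a weak $H^{1}$-solution $(\varphi,c_{j,k})$ of \eqref{2.3} with $h\equiv 0$; local elliptic regularity gives $\varphi\in C^{1,\alpha}_{\mathrm{loc}}$, and the barrier estimate from the proof of Lemma \ref{lem2.1} (the chain \eqref{l}--\eqref{2.21}) upgrades this to $\|\varphi\|_{*}<\infty$, whence $\varphi\equiv 0$ by Lemma \ref{lem2.1} and, as in the uniqueness step, $c_{j,k}=0$. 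This yields a unique $(\varphi_{\sigma,\textbf{Q}_{m}},c_{j,k})$ solving \eqref{2.3}, and \eqref{2.23} follows from Lemma \ref{lem2.1}. I expect the main (mild) obstacle to be precisely this regularity-and-decay bootstrap that legitimizes applying Lemma \ref{lem2.1} to the $H^{1}$-solution produced by Fredholm theory; once that is in place everything else is routine, and the uniformity of $C$ in $\mu,m,\textbf{Q}_{m}$ is inherited directly from Lemma \ref{lem2.1}.
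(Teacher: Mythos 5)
Your proposal is correct and takes essentially the same route as the paper: rewrite \eqref{2.3} as a compact perturbation of the identity on the orthogonality-constrained subspace $\mathcal{H}$, invoke the Fredholm alternative, and use Lemma~\ref{lem2.1} both for injectivity of the homogeneous problem and for the final estimate \eqref{2.23}. You usefully fill in details the paper leaves implicit (the compactness of $\mathcal{K}$ from the decay of $f'(z_{\textbf{Q}_m})$, the Lax--Milgram setup, and especially the elliptic-regularity/barrier bootstrap needed to legitimately apply the $\|\cdot\|_*$ a priori bound of Lemma~\ref{lem2.1} to an $H^1$ solution produced by Fredholm theory). One small inaccuracy: when $A_0\neq 0$ the Gram matrix $(Re\int D_{j,k}\bar D_{j,l})_{k,l}$ is \emph{not} a small perturbation of a diagonal matrix (the off-diagonal terms $A_{0,k}A_{0,l}\int w^2$ and $A_{0,k}\int w^2$ are $O(1)$, and \eqref{2.6}--\eqref{2.7} only address the diagonal); however, it is still positive definite uniformly in $\mu$ large, or alternatively one can argue directly that $\sum_k c_k D_{j,k}=0$ on a neighbourhood of $Q_j$ forces $c_k=0$ by separating real and imaginary parts and using linear independence of $\partial w/\partial x_k$. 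So the conclusion of your uniqueness step stands, only the justification as written is slightly off.
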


\begin{proof}
Here we consider the space $$\mathcal {H}= \Bigl\{ u \in H^{1}(\mathbb{R}^{N}) : Re\int u\bar{D}_{j,k} = 0, (\textbf{Q}_{m},\sigma)\in\Omega_{m}\times[0,2\pi]\Bigl\} .$$

Problem \eqref{2.3} can be rewritten as
\begin{equation}\label{2.24}
\varphi_{\sigma,\textbf{Q}_{m}} + \mathcal {K}(\varphi_{\sigma,\textbf{Q}_{m}}) = \bar{h}, ~in~ \mathcal {H},
\end{equation}
where $\bar{h}$ is defined by duality and $\mathcal {K}: H \rightarrow H$ is a linear compact operator. By Fredholm's alternative, we know that the equation
\eqref{2.24} has a unique solution for $\bar{h}=0$ which in turn follows from Lemma \ref{lem2.1}. The estimate \eqref{2.23} follows from directly from \eqref{2.4} in Lemma \ref{lem2.1}. The
proof is complete.
\end{proof}

In the sequel, if $\varphi_{\sigma,\textbf{Q}_{m}}$ is the unique solution given by Proposition \ref{prop2.2}, we denote
\begin{equation}\label{2.25}
\varphi_{\sigma,\textbf{Q}_{m}} = \mathcal {A}(h).
\end{equation}
By \eqref{2.23}, we have
\begin{equation}\label{2.26}
\|\mathcal {A}(h)\|_{*}\leq C\|h\|_{*}.
\end{equation}

Now, we consider
\begin{eqnarray}\label{2.27}
\left\{%
\begin{array}{ll}
   \ds -\Bigl(\frac{\nabla}{i}-A_{\epsilon}(x)\Bigl)^{2}(z_{\textbf{Q}_{m}}+\varphi_{\sigma,\textbf{Q}_{m}})-V_{\epsilon}(x)(z_{\textbf{Q}_{m}}+\varphi_{\sigma,\textbf{Q}_{m}})+f(z_{\textbf{Q}_{m}}+\varphi_{\sigma,\textbf{Q}_{m}}) \vspace{0.2cm}\\
\ds=\sum_{j=1}^{m}\sum_{k=1}^{N+1}c_{j,k}D_{j,k},~in~\mathbb{R}^{N},\vspace{0.2cm}\\
\ds Re\int\varphi_{\sigma,\textbf{Q}_{m}}\bar{D}_{j,k}=0~for~j=1,\ldots,m,k=1,\ldots,N+1.
\end{array}
\right.
\end{eqnarray}

We come to the main result in this section.

\begin{prop}\label{prop2.3}
Given $\gamma\in(0, 1)$. There exist positive numbers $\mu_{0},C ~and ~\eta > 0$ such that for all $\mu > \mu_{0}$, and for any $(\sigma,\textbf{Q}_{m})\in
[0, 2\pi]\times \Omega_{m}$ and $\epsilon< e^{-2\mu},$ there is a unique solution $(\varphi_{\sigma,\textbf{Q}_{m}}, {c_{j,k}})$ to problem \eqref{2.27}. Furthermore,
$\varphi_{\sigma,\textbf{Q}_{m}}$is $C^{1}$ in $(\sigma,\textbf{Q}_{m})$ and we have
\begin{equation}\label{2.28}
\|\varphi_{\sigma,\textbf{Q}_{m}}\|_{*}\leq Ce^{-\beta\mu},~|c_{j,k}|\leq Ce^{-\beta\mu}.
\end{equation}
\end{prop}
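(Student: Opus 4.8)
The plan is to set up a fixed-point argument for \eqref{2.27} using the linear theory of Proposition \ref{prop2.2}. Writing $\varphi=\varphi_{\sigma,\textbf{Q}_{m}}$ and expanding $f(z_{\textbf{Q}_{m}}+\varphi)=f(z_{\textbf{Q}_{m}})+f'(z_{\textbf{Q}_{m}})\varphi+N(\varphi)$, where $N(\varphi)=f(z_{\textbf{Q}_{m}}+\varphi)-f(z_{\textbf{Q}_{m}})-f'(z_{\textbf{Q}_{m}})\varphi$, problem \eqref{2.27} is equivalent to
\begin{equation*}
L(\varphi)=-\mathcal{S}(z_{\textbf{Q}_{m}})-N(\varphi)+\sum_{j,k}c_{j,k}D_{j,k},
\end{equation*}
which, via the operator $\mathcal{A}$ of \eqref{2.25}, becomes the fixed-point equation $\varphi=\mathcal{A}\bigl(-\mathcal{S}(z_{\textbf{Q}_{m}})-N(\varphi)\bigr)=:\mathcal{T}(\varphi)$. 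First I would establish the key estimate on the error term $\|\mathcal{S}(z_{\textbf{Q}_{m}})\|_{*}\leq Ce^{-\beta\mu}$; this is where the magnetic field enters and is the technical heart of the proof. Since $z_{Q_j}=\xi_j w_{Q_j}$ and $-(\frac{\nabla}{i}-A_0)^2 z_{Q_j}-z_{Q_j}+f(z_{Q_j})=0$ for each single spike, one computes
\begin{equation*}
\mathcal{S}(z_{\textbf{Q}_{m}})=\Bigl(f(z_{\textbf{Q}_{m}})-\sum_j f(z_{Q_j})\Bigr)-\epsilon\,\tilde V\, z_{\textbf{Q}_{m}}+\Bigl(\tfrac{\epsilon}{i}\mathrm{div}\tilde A-2\epsilon A_0\cdot\tilde A-\epsilon^2|\tilde A|^2\Bigr)z_{\textbf{Q}_{m}}+\tfrac{2\epsilon}{i}\tilde A\cdot\nabla z_{\textbf{Q}_{m}}.
\end{equation*}
The interaction term is bounded by $C e^{-\frac{\mu}{2}(1+\beta)}$ exactly as in the estimates \eqref{2.13}, using Lemma \ref{number} to sum the overlaps, while each $\epsilon$-term is bounded using $\epsilon<e^{-2\mu}$ together with $(A_1),(A_4),(V_1)$ and the exponential decay of $w$; the boundedness of $\tilde A$, $\mathrm{div}\tilde A$, $\nabla\tilde A$ and $\tilde V$ is what makes the $\epsilon$-contributions $O(e^{-2\mu})\ll e^{-\beta\mu}$. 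Hence $\|\mathcal{S}(z_{\textbf{Q}_{m}})\|_*\leq Ce^{-\beta\mu}$.

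Next I would verify that $\mathcal{T}$ is a contraction on the ball $\mathcal{B}=\{\varphi\in\mathcal{H}:\|\varphi\|_*\leq \rho e^{-\beta\mu}\}$ for a suitable large constant $\rho$. By $(f_1)$, $f\in C^{1+\delta}$, so $|N(\varphi)|\leq C|\varphi|^{1+\delta}$ and more precisely $|N(\varphi_1)-N(\varphi_2)|\leq C(|\varphi_1|^\delta+|\varphi_2|^\delta)|\varphi_1-\varphi_2|$. Translating these pointwise bounds into the $\|\cdot\|_*$-norm (using that $E(x)^{1+\delta}\leq C E(x)$ since $E$ is bounded, and controlling the spatial overlaps via Lemma \ref{number} as before) gives $\|N(\varphi)\|_*\leq C\|\varphi\|_*^{1+\delta}$ and $\|N(\varphi_1)-N(\varphi_2)\|_*\leq C(\|\varphi_1\|_*^\delta+\|\varphi_2\|_*^\delta)\|\varphi_1-\varphi_2\|_*$. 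Combined with \eqref{2.26}, for $\varphi\in\mathcal{B}$ we get $\|\mathcal{T}(\varphi)\|_*\leq C(e^{-\beta\mu}+\rho^{1+\delta}e^{-(1+\delta)\beta\mu})\leq \rho e^{-\beta\mu}$ once $\mu_0$ is large and $\rho$ chosen appropriately, and the Lipschitz constant of $\mathcal{T}$ on $\mathcal{B}$ is $\leq C\rho^\delta e^{-\delta\beta\mu}<1/2$ for $\mu$ large. The contraction mapping principle then yields a unique $\varphi_{\sigma,\textbf{Q}_{m}}\in\mathcal{B}$, hence the bound $\|\varphi_{\sigma,\textbf{Q}_{m}}\|_*\leq Ce^{-\beta\mu}$; plugging this back into \eqref{2.20} (which bounds $|c_{j,k}|$ in terms of $\|\varphi\|_*$ and $\|h\|_*$, here with $h=-\mathcal{S}(z_{\textbf{Q}_{m}})-N(\varphi)$) gives $|c_{j,k}|\leq Ce^{-\beta\mu}$.

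Finally, for the $C^1$ dependence on $(\sigma,\textbf{Q}_{m})$, I would note that $z_{\textbf{Q}_{m}}$, $D_{j,k}$ and the cut-offs $\eta_j$ depend smoothly on these parameters, and $f\in C^{1+\delta}$ makes $\mathcal{S}(z_{\textbf{Q}_{m}})$ and $N(\varphi)$ continuously differentiable in $\varphi$ and in the parameters. Differentiating the fixed-point identity formally, $\partial_{(\sigma,\textbf{Q}_{m})}\varphi$ solves a linear problem of the same type \eqref{2.3} (with the orthogonality conditions now also involving $\partial_{(\sigma,\textbf{Q}_{m})}D_{j,k}$), whose right-hand side is controlled in $\|\cdot\|_*$; applying the invertibility from Proposition \ref{prop2.2} and the implicit function theorem on the space $\mathcal{H}$ gives that $\varphi_{\sigma,\textbf{Q}_{m}}$ is $C^1$. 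The main obstacle is the first step — obtaining $\|\mathcal{S}(z_{\textbf{Q}_{m}})\|_*\leq Ce^{-\beta\mu}$ uniformly in $m$ — because the magnetic terms $\frac{\epsilon}{i}\mathrm{div}\tilde A$, $\epsilon A_0\cdot\tilde A$, $\epsilon^2|\tilde A|^2$ and especially the first-order term $\frac{2\epsilon}{i}\tilde A\cdot\nabla z_{\textbf{Q}_{m}}$ have no analogue in \cite{aw} and must be absorbed using the smallness $\epsilon<e^{-2\mu}$ rather than spatial decay; everything else is a routine adaptation of the estimates already carried out in the proof of Lemma \ref{lem2.1}.
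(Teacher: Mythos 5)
Your proposal is correct and follows essentially the same approach as the paper: the fixed-point reformulation $\varphi=\mathcal{A}(-\mathcal{S}(z_{\textbf{Q}_{m}})+\mathcal{N}(\varphi))$, the error estimate $\|\mathcal{S}(z_{\textbf{Q}_{m}})\|_{*}\leq Ce^{-\beta\mu}$ (the paper's Lemma 2.4, with the same decomposition into interaction and $\epsilon$-terms and the same use of $\epsilon<e^{-2\mu}$ plus boundedness of $\tilde V$, $\tilde A$, $\mathrm{div}\tilde A$), the nonlinear estimates $\|\mathcal{N}(\varphi)\|_{*}\leq C\|\varphi\|_{*}^{1+\delta}$ and the Lipschitz bound (the paper's Lemma 2.5), and then the contraction mapping principle followed by \eqref{2.20} for $|c_{j,k}|$. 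The only cosmetic difference is your choice of ball radius $\rho e^{-\beta\mu}$ versus the paper's $e^{-(\beta-\tau)\mu}$, which is equivalent; the paper additionally records the $2\pi$-periodicity of $\varphi_{\sigma,\textbf{Q}_{m}}$ in $\sigma$, which you omit but which is a minor observation.
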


Note that the first equation in \eqref{2.27} can be rewritten as
\begin{equation}\label{2.29}
L(\varphi_{\sigma,\textbf{Q}_{m}})=-\mathcal {S}(z_{\textbf{Q}_{m}})+\mathcal {N}(\varphi_{\sigma,\textbf{Q}_{m}})
+\sum_{j=1}^{m}\sum_{k=1}^{N+1}c_{j,k}D_{j,k},
\end{equation}
where
\begin{equation}\label{2.30}
L(\varphi_{\sigma,\textbf{Q}_{m}})=-\Bigl(\frac{\nabla}{i}-A_{\epsilon}(x)\Bigl)^{2}\varphi_{\sigma,\textbf{Q}_{m}}
-V_{\epsilon}(x)\varphi_{\sigma,\textbf{Q}_{m}}+f'(z_{\textbf{Q}_{m}})\varphi_{\sigma,\textbf{Q}_{m}}
\end{equation}

and
\begin{equation}\label{2.31}
\mathcal
{N}(\varphi_{\sigma,\textbf{Q}_{m}})
=-\big[f(z_{\textbf{Q}_{m}}+\varphi_{\sigma,\textbf{Q}_{m}})-f(z_{\textbf{Q}_{m}})-f'(z_{\textbf{Q}_{m}})\varphi_{\sigma,\textbf{Q}_{m}}\big].
\end{equation}

In order to use the contraction mapping theorem to prove that \eqref{2.29} is uniquely solvable in the set that $\|\varphi_{\sigma,\textbf{Q}_{m}}\|_{*}$ is small, we
need to estimate $\|\mathcal {S}(z_{\textbf{Q}_{m}})\|_{*}$ and $\|\mathcal {N}(\varphi_{\sigma,\textbf{Q}_{m}})\|_{*}$ respectively.

\begin{lem}\label{lem2.4}
Given $\gamma\in(0, 1)$. For $\mu$ large enough, and any $(\sigma,\textbf{Q}_{m})\in [0,2\pi]\times \Omega_{m},$ $\epsilon< e^{-2\mu}$, we have
\begin{equation}\label{2.32}
\|\mathcal {S}(z_{\textbf{Q}_{m}})\|_{*}\leq Ce^{-\beta\mu},
\end{equation}
for some constant $\beta > 0$ and $C$ independent of $\mu,m$, $\textbf{Q}_{m}$ and $\sigma$.
\end{lem}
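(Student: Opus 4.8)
The plan is to estimate $\mathcal{S}(z_{\textbf{Q}_{m}})$ pointwise against the weight $E(x) = \sum_{j} e^{-\gamma|x-Q_{j}|}$ by decomposing $\R^{N}$ into the regions near the spikes and the region far from all of them. First I would write
$$
\mathcal{S}(z_{\textbf{Q}_{m}}) = -\Bigl(\frac{\nabla}{i}-A_{\epsilon}\Bigr)^{2}z_{\textbf{Q}_{m}} - V_{\epsilon}z_{\textbf{Q}_{m}} + f(z_{\textbf{Q}_{m}}),
$$
and expand $\bigl(\frac{\nabla}{i}-A_{\epsilon}\bigr)^{2} = \bigl(\frac{\nabla}{i}-A_{0}\bigr)^{2} + \bigl(\frac{\epsilon}{i}\,\mathrm{div}\,\tilde{A} - 2\epsilon A_{0}\cdot\tilde{A} - \epsilon^{2}|\tilde{A}|^{2}\bigr) + \frac{2\epsilon}{i}\tilde{A}\cdot\nabla$, and $V_{\epsilon} = 1 + \epsilon\tilde{V}$. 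Since each $z_{Q_{j}} = \xi_{j}w_{Q_{j}}$ solves $-\bigl(\frac{\nabla}{i}-A_{0}\bigr)^{2}z_{Q_{j}} - z_{Q_{j}} + f(z_{Q_{j}}) = 0$ (Remark \ref{rem1.4}), the leading terms cancel spike by spike, leaving
$$
\mathcal{S}(z_{\textbf{Q}_{m}}) = \Bigl[f\Bigl(\sum_{j}z_{Q_{j}}\Bigr) - \sum_{j}f(z_{Q_{j}})\Bigr] - \epsilon\tilde{V}z_{\textbf{Q}_{m}} + \Bigl(\frac{\epsilon}{i}\,\mathrm{div}\,\tilde{A} - 2\epsilon A_{0}\cdot\tilde{A} - \epsilon^{2}|\tilde{A}|^{2}\Bigr)z_{\textbf{Q}_{m}} + \frac{2\epsilon}{i}\tilde{A}\cdot\nabla z_{\textbf{Q}_{m}}.
$$

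The interaction term is the genuinely nonlinear one: in the ball $B_{\mu/2}(Q_{j})$ one has $\bigl|f(\sum_{k}z_{Q_{k}}) - \sum_{k}f(z_{Q_{k}})\bigr| \le C\bigl(w_{Q_{j}}^{\delta}\sum_{k\ne j}w_{Q_{k}} + (\sum_{k\ne j}w_{Q_{k}})^{\min(1+\delta,\,1)}\bigr)$ using the $C^{1+\delta}$ regularity of $f$ and $f'(0)=0$ from $(f_{1})$, and since $|Q_{k}-Q_{j}|\ge\mu$ the cross terms are bounded by $e^{-|x-Q_{j}|}\cdot Ce^{-(\mu - |x-Q_{j}|)\cdot c}$ type quantities; multiplying by $E(x)^{-1}\ge c\,e^{\gamma|x-Q_{j}|}$ and using $\gamma < 1$ gives a bound $Ce^{-\beta\mu}$. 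Lemma \ref{number} is what controls the number of overlapping spikes so that the sum over $j$ and over the dyadic shells $\frac{l}{2}\mu\le|x-Q_{j}|<\frac{l+1}{2}\mu$ converges and stays $O(e^{-\beta\mu})$; far from all spikes, $w_{\textbf{Q}_{m}}\le Cw(\bar\mu)$ as computed just above \eqref{ll1}, which handles that region directly. The $\epsilon$-terms are the easy ones: each of $|\tilde{V}|$, $|\mathrm{div}\,\tilde{A}|$, $|A_{0}\cdot\tilde{A}|$, $|\tilde{A}|^{2}$, $|\tilde{A}|$ is bounded (by $(A_{1})$, $(A_{4})$, $(V_{1})$ and continuity), so each of these terms is pointwise $\le C\epsilon\,(|\nabla z_{\textbf{Q}_{m}}| + |z_{\textbf{Q}_{m}}|) \le C\epsilon\sum_{j}(w_{Q_{j}} + |\nabla w_{Q_{j}}|)$, and multiplying by $E(x)^{-1}$ and using $\epsilon < e^{-2\mu}$ together with $\gamma<1$ again yields $Ce^{-\beta\mu}$ (indeed the $e^{-2\mu}$ factor makes these terms even smaller than the interaction term).

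The main obstacle is the interaction term near a spike when $x$ is close to the "midpoint" of two spikes, i.e. making the exponent bookkeeping in $w_{Q_{j}}^{\delta}w_{Q_{k}}$ versus the weight $e^{\gamma|x-Q_{j}|}$ come out with a strictly positive power of $\mu$ uniformly in $m$ and $\textbf{Q}_{m}$; this is exactly where one needs $\gamma<1$ and the counting Lemma \ref{number}, and where the argument differs from the real-valued case because of the phases $\xi_{j} = e^{i\sigma + iA_{0}\cdot(x-Q_{j})}$ — though since $|\xi_{j}|\equiv 1$ the moduli behave exactly as in the scalar case, so the phases cause no essential difficulty here (they will matter more in the energy expansion of Section 3). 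I would conclude by summing the two regional estimates and choosing $\beta = \beta(\gamma,\delta) > 0$ small enough, noting that all constants are independent of $\mu$, $m$, $\textbf{Q}_{m}$ and $\sigma$ because every estimate was reduced to a translation-invariant integral $\int_{0}^{\infty}e^{-(1+\gamma)s}s^{N-1}\,ds$ against a fixed exponentially decaying profile $w$.
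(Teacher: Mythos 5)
Your proposal follows essentially the same route as the paper: expand $\bigl(\frac{\nabla}{i}-A_{\epsilon}\bigr)^{2}$ around $\bigl(\frac{\nabla}{i}-A_{0}\bigr)^{2}$, cancel the leading terms spike by spike using the equation satisfied by each $z_{Q_{j}}$, reduce the nonlinear interaction $f(z_{\textbf{Q}_{m}})-\sum_{j}f(z_{Q_{j}})$ to the known $Ce^{-\beta\mu}\sum_{j}e^{-\gamma|x-Q_{j}|}$ bound, and absorb the $\epsilon\tilde V$, $\epsilon\,\mathrm{div}\tilde A$, $\epsilon\tilde A\cdot\nabla$, $\epsilon^{2}|\tilde A|^{2}$ terms using $\epsilon<e^{-2\mu}$ together with the boundedness of the coefficients and a near/far case split (the paper does this explicitly for \eqref{2.35} and invokes \cite{aw} for the interaction term). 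A minor point in your favor: you estimate the interaction pointwise directly on the complex sum $\sum_{j}\xi_{j}w_{Q_{j}}$ rather than asserting, as \eqref{2.34} does, that the phases factor out to give $|e^{i\sigma}|\,|f(w_{\textbf{Q}_{m}})-\sum f(w_{Q_{j}})|$ --- since $\xi_{j}=e^{i\sigma+iA_{0}\cdot(x-Q_{j})}$ carries a $j$-dependent phase $e^{-iA_{0}\cdot Q_{j}}$, the paper's stated equality is not literally correct when $A_{0}\neq 0$, though the resulting bound is unaffected and your direct argument via $(f_{1})$ (i.e. $|f(a+b)-f(a)-f(b)|\leq C\min(|a|^{\delta}|b|,|a||b|^{\delta})$) avoids this issue cleanly.
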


\begin{proof}
Note that
\begin{equation}\label{2.33}
\begin{array}{ll}
\mathcal {S}(z_{\textbf{Q}_{m}})&=\ds-\Bigl(\frac{\nabla}{i}-A_{\epsilon}(x)\Bigl)^{2}z_{\textbf{Q}_{m}}-V_{\epsilon}(x)z_{\textbf{Q}_{m}}+f(z_{\textbf{Q}_{m}})\vspace{0.2cm}\\
&=\ds-\Bigl(\frac{\nabla}{i}-A_{0}\Bigl)^{2}z_{\textbf{Q}_{m}}-z_{\textbf{Q}_{m}}-\epsilon \tilde{V}(x)z_{\textbf{Q}_{m}}+f(z_{\textbf{Q}_{m}})\vspace{0.2cm}\\
&\,\,\,\,\,\,+\ds\frac{\epsilon}{i}div\tilde{A}(x)z_{\textbf{Q}_{m}}+\frac{2\epsilon}{i}\tilde{A}(x)\cdot\nabla z_{\textbf{Q}_{m}}-2\epsilon A_{0}\cdot \tilde{A} z_{\textbf{Q}_{m}}-\epsilon^{2}|\tilde{A}(x)|^{2}z_{\textbf{Q}_{m}}\vspace{0.2cm}\\
&=\ds-\epsilon \tilde{V}(x)z_{\textbf{Q}_{m}}+f(z_{\textbf{Q}_{m}})-\sum_{j=1}^{m}f(z_{Q_{j}})\vspace{0.2cm}\\
&\,\,\,\,\,\,+\ds\frac{\epsilon}{i}div\tilde{A}(x)z_{\textbf{Q}_{m}}+\frac{2\epsilon}{i}\tilde{A}(x)\cdot\nabla z_{\textbf{Q}_{m}}-2\epsilon A_{0}\cdot \tilde{A}
z_{\textbf{Q}_{m}}-\epsilon^{2}|\tilde{A}(x)|^{2}z_{\textbf{Q}_{m}}\vspace{0.2cm}\\
&=\ds-\epsilon \tilde{V}(x)z_{\textbf{Q}_{m}}+f(z_{\textbf{Q}_{m}})-\sum_{j=1}^{m}f(z_{Q_{j}})\vspace{0.2cm}\\
&\,\,\,\,\,\,+\ds\frac{\epsilon}{i}div\tilde{A}(x)z_{\textbf{Q}_{m}}+\frac{2\epsilon}{i}\sum_{j=1}^{m}\xi_{j}\tilde{A}(x)\cdot\nabla w_{Q_{j}}-\epsilon^{2}|\tilde{A}(x)|^{2}z_{\textbf{Q}_{m}}.
\end{array}
\end{equation}

By (2.5) and (2.6) of section 2.1 in \cite{aw}, it follows from that
\begin{equation}\label{2.34}
\Bigl|f(z_{\textbf{Q}_{m}})-\sum_{j=1}^{m}f(z_{Q_{j}})\Bigl|=|e^{i\sigma}|\Bigl|f(w_{\textbf{Q}_{m}})-\sum_{j=1}^{m}f(w_{Q_{j}})\Bigl|\leq
Ce^{-\beta\mu}\sum_{j=1}^{m}e^{-\gamma|x-Q_{j}|}
\end{equation}
for a proper choice of $\beta > 0$.

Moreover, by the assumption of $\epsilon$, we can prove that
\begin{equation}\label{2.35}
|\epsilon \tilde{V}(x)z_{\textbf{Q}_{m}}|\leq Ce^{-\beta\mu}\sum_{j=1}^{m}e^{-\gamma|x-Q_{j}|}
\end{equation}
for some $\beta> 0$. In fact, on one hand, fix $j\in\{1, 2, \ldots,m\}$ and consider the region $|x -Q_{j}| \leq \frac{\mu}{2} $. In this region, we have
$$|\epsilon \tilde{V}(x)z_{\textbf{Q}_{m}}|\leq Ce^{-2\mu}\leq Ce^{-\mu}e^{-2|x -Q_{j}|}\leq Ce^{-\beta\mu}\sum_{j}e^{-\gamma|x-Q_{j}|}.$$
On the other hand, considering the region $|x -Q_{j} | > \frac{\mu}{2} $ for all $j$, we have $$|\epsilon \tilde{V}(x)z_{\textbf{Q}_{m}}|\leq
Ce^{-2\mu}|w_{\textbf{Q}_{m}}|\leq Ce^{-\beta\mu}\sum_{j=1}^{m}e^{-\gamma|x-Q_{j}|}.$$

By the same arguments with \eqref{2.35}, we can prove
\begin{equation}\label{2.36}
\bigl|\frac{\epsilon}{i}div\tilde{A}(x)z_{\textbf{Q}_{m}}\bigl|\leq Ce^{-\beta\mu}\sum_{j=1}^{m}e^{-\gamma|x-Q_{j}|},
\end{equation}
\begin{equation}\label{2.37}
\Bigl|\frac{2\epsilon}{i}\sum_{j=1}^{m}\xi_{j}\tilde{A}(x)\cdot\nabla w_{Q_{j}}\Bigl|\leq Ce^{-\beta\mu}\sum_{j=1}^{m}e^{-\gamma|x-Q_{j}|}
\end{equation}
and
\begin{equation}\label{2.38}
\Bigl|\epsilon^{2}|\tilde{A}(x)|^{2}z_{\textbf{Q}_{m}}\Bigl|\leq Ce^{-\beta\mu}\sum_{j=1}^{m}e^{-\gamma|x-Q_{j}|}.
\end{equation}

It follows from \eqref{2.33} to \eqref{2.38} that $$\|\mathcal {S}(z_{\textbf{Q}_{m}})\|_{*}\leq Ce^{-\beta\mu}$$ for some $\beta > 0$ independent of $\mu, m$ and $
\textbf{Q}_{m}$.
\end{proof}

\begin{lem}\label{lem2.5}
For any $\textbf{Q}_{m}\in \Omega_{m}$ satisfying $\|\varphi_{\sigma,\textbf{Q}_{m}}\|_{*}\leq 1$, we have
\begin{equation}\label{2.39}
\|\mathcal {N}(\varphi_{\sigma,\textbf{Q}_{m}})\|_{*}\leq C\|\varphi_{\sigma,\textbf{Q}_{m}}\|_{*}^{1+\delta}
\end{equation}
and
\begin{equation}\label{2.40}
\|\mathcal {N}(\varphi^{1}_{\sigma,\textbf{Q}_{m}})-\mathcal
{N}(\varphi^{2}_{\sigma,\textbf{Q}_{m}})\|_{*}\leq
C(\|\varphi^{1}_{\sigma,\textbf{Q}_{m}}\|_{*}^{\delta}+\|\varphi^{2}_{\sigma,\textbf{Q}_{m}}\|_{*}^{\delta})\|\varphi^{1}_{\sigma,\textbf{Q}_{m}}-\varphi^{2}_{\sigma,\textbf{Q}_{m}}\|_{*}.
\end{equation}
\end{lem}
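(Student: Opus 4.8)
The plan is to estimate the nonlinear operator $\mathcal{N}(\varphi_{\sigma,\textbf{Q}_{m}})$ pointwise and then convert the pointwise bounds into bounds for the weighted norm $\|\cdot\|_{*}$. Recall that $\mathcal{N}(\varphi) = -[f(z_{\textbf{Q}_{m}}+\varphi) - f(z_{\textbf{Q}_{m}}) - f'(z_{\textbf{Q}_{m}})\varphi]$. By the mean value theorem and assumption $(f_{1})$ (that $f\in C^{1+\delta}$), I would write
$$
f(z_{\textbf{Q}_{m}}+\varphi) - f(z_{\textbf{Q}_{m}}) - f'(z_{\textbf{Q}_{m}})\varphi = \int_{0}^{1}\big[f'(z_{\textbf{Q}_{m}}+t\varphi) - f'(z_{\textbf{Q}_{m}})\big]\varphi\,dt,
$$
and then use the Hölder continuity $|f'(a+b) - f'(a)| \leq C|b|^{\delta}$ to obtain the pointwise bound $|\mathcal{N}(\varphi)(x)| \leq C|\varphi(x)|^{1+\delta}$.

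For \eqref{2.39}, multiply the pointwise bound by $E(x)^{-1}$: since $E(x) \geq \sum_{j} e^{-\gamma|x-Q_{j}|}$ and $\gamma\in(0,1)$, and since $\|\varphi\|_{*}\leq 1$ means $|\varphi(x)|\leq \|\varphi\|_{*}E(x)$, we get
$$
E(x)^{-1}|\mathcal{N}(\varphi)(x)| \leq C E(x)^{-1}\|\varphi\|_{*}^{1+\delta}E(x)^{1+\delta} = C\|\varphi\|_{*}^{1+\delta}E(x)^{\delta}.
$$
The point then is that $E(x)^{\delta}$ is uniformly bounded — this needs the elementary fact that $E(x)\leq Cw^{\delta}(\bar\mu)$-type estimates from Lemma \ref{number} (the same bound $w_{\textbf{Q}_{m}}\leq Cw(\bar\mu)$ used around \eqref{ll1} shows $\sup_{x}E(x)<\infty$ for fixed $\mu$, indeed $E$ is bounded by a constant depending only on $N$ and $\gamma$). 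Taking the supremum over $x$ yields $\|\mathcal{N}(\varphi)\|_{*}\leq C\|\varphi\|_{*}^{1+\delta}$. One small care point: the region where several bumps overlap must be handled, but there Lemma \ref{number} controls the number of $Q_{j}$ within any fixed radius, so $E(x)$ stays bounded uniformly in $m$.

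For the Lipschitz-type estimate \eqref{2.40}, I would write the difference as
$$
\mathcal{N}(\varphi^{1}) - \mathcal{N}(\varphi^{2}) = -\int_{0}^{1}\big[f'(z_{\textbf{Q}_{m}}+\varphi^{2}+t(\varphi^{1}-\varphi^{2})) - f'(z_{\textbf{Q}_{m}})\big](\varphi^{1}-\varphi^{2})\,dt + \text{(a remainder)},
$$
or, more cleanly, apply the mean value theorem to the map $s\mapsto f(z_{\textbf{Q}_{m}}+s) - f'(z_{\textbf{Q}_{m}})s$ between $s=\varphi^{1}$ and $s=\varphi^{2}$, so that the derivative $f'(z_{\textbf{Q}_{m}}+\xi) - f'(z_{\textbf{Q}_{m}})$ appears with $\xi$ between $\varphi^{1}$ and $\varphi^{2}$. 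Using $|f'(z_{\textbf{Q}_{m}}+\xi)-f'(z_{\textbf{Q}_{m}})|\leq C|\xi|^{\delta}\leq C(|\varphi^{1}|^{\delta}+|\varphi^{2}|^{\delta})$ gives the pointwise bound $|\mathcal{N}(\varphi^{1})-\mathcal{N}(\varphi^{2})|(x)\leq C(|\varphi^{1}(x)|^{\delta}+|\varphi^{2}(x)|^{\delta})|\varphi^{1}(x)-\varphi^{2}(x)|$, and then the same $E(x)^{-1}$ weighting and boundedness of $E^{\delta}$ argument converts this into \eqref{2.40}.

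The main obstacle — though it is really more of a bookkeeping issue than a conceptual one — is ensuring all constants are independent of $m$ and of $\textbf{Q}_{m}\in\Omega_{m}$; this is exactly where Lemma \ref{number} is essential, since without the bound on the number of nearby centers one could not control $\sup_{x}E(x)$ uniformly. A secondary subtlety is that $f'$ is only $\delta$-Hölder (not Lipschitz), so one must be careful to use the integral/mean-value representation at the level of $f'$ rather than $f''$; the inequality $|a+b|^{\delta}\leq |a|^{\delta}+|b|^{\delta}$ for $0<\delta\leq 1$ is used freely to split cross terms. Everything else is a routine computation.
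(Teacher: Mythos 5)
Your argument is correct and follows essentially the same route as the paper: expand $\mathcal{N}$ via the mean value theorem, use the $\delta$-H\"older continuity of $f'$ from $(f_{1})$ to get the pointwise bound $|\mathcal{N}(\varphi)|\leq C|\varphi|^{1+\delta}$ (and the analogous pointwise bound for the difference), replace $|\varphi|$ by $\|\varphi\|_{*}E(x)$, and absorb $E(x)^{\delta}$ into the constant using the uniform boundedness of $E$ on $\Omega_{m}$. The one place your write-up is actually a bit more careful than the paper's is the mean-value step: since $f:\mathbb{C}\to\mathbb{C}$, the pointwise form $f(z+\varphi)-f(z)=f'(z+\vartheta\varphi)\varphi$ used in the paper is not literally valid, and the integral representation $\int_{0}^{1}[f'(z+t\varphi)-f'(z)]\varphi\,dt$ (or arguing on real and imaginary parts) is the cleaner way to obtain the same bound. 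Your explicit appeal to Lemma~\ref{number} to justify $\sup_{x}E(x)<\infty$ uniformly in $m$ and $\textbf{Q}_{m}$ is exactly what the paper's step $(\sum_{j}e^{-\gamma|x-Q_{j}|})^{1+\delta}\leq C\sum_{j}e^{-\gamma|x-Q_{j}|}$ implicitly relies on.
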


\begin{proof}
By direct computation and applying the mean-value theorem, we have
\begin{equation}\label{2.41}
\begin{array}{ll}
|\mathcal {N}(\varphi_{\sigma,\textbf{Q}_{m}})|&
=\ds\bigl|f(z_{\textbf{Q}_{m}}+\varphi_{\sigma,\textbf{Q}_{m}})-f(z_{\textbf{Q}_{m}})-f'(z_{\textbf{Q}_{m}})\varphi_{\sigma,\textbf{Q}_{m}}\bigl|\vspace{0.2cm}\\
&=\ds\bigl|f'(z_{\textbf{Q}_{m}}+\vartheta\varphi_{\sigma,\textbf{Q}_{m}})\varphi_{\sigma,\textbf{Q}_{m}}-f'(z_{\textbf{Q}_{m}})\varphi_{\sigma,\textbf{Q}_{m}}\bigl|\vspace{0.2cm}\\
&\leq\ds C|\varphi_{\sigma,\textbf{Q}_{m}}|^{1+\delta}\leq
C\|\varphi_{\sigma,\textbf{Q}_{m}}\|_{*}^{1+\delta}\Bigl(\sum_{j=1}^{m} e^{-\gamma|x-Q_{j}|}\Bigl)^{1+\delta}\vspace{0.2cm}\\
&\ds\leq C\|\varphi_{\sigma,\textbf{Q}_{m}}\|_{*}^{1+\delta}\sum_{j=1}^{m} e^{-\gamma|x-Q_{j}|}
\end{array}
\end{equation}
and
\begin{equation}\label{2.42}
\begin{array}{ll}
&|\mathcal {N}(\varphi^{1}_{\sigma,\textbf{Q}_{m}})-\mathcal
{N}(\varphi^{2}_{\sigma,\textbf{Q}_{m}})|\vspace{0.2cm}\\
&=\ds\bigl|f(z_{\textbf{Q}_{m}}+\varphi^{1}_{\sigma,\textbf{Q}_{m}})-f(z_{\textbf{Q}_{m}}+\varphi^{2}_{\sigma,\textbf{Q}_{m}})
-f'(z_{\textbf{Q}_{m}})\varphi^{1}_{\sigma,\textbf{Q}_{m}}+f'(z_{\textbf{Q}_{m}})\varphi^{2}_{\sigma,\textbf{Q}_{m}}\bigl|\vspace{0.2cm}\\
&\ds
=\bigl|f'(z_{\textbf{Q}_{m}}+\vartheta(\varphi^{1}_{\sigma,\textbf{Q}_{m}}-\varphi^{2}_{\sigma,\textbf{Q}_{m}}))(\varphi^{1}_{\sigma,\textbf{Q}_{m}}-\varphi^{2}_{\sigma,\textbf{Q}_{m}})
-f'(z_{\textbf{Q}_{m}})(\varphi^{1}_{\sigma,\textbf{Q}_{m}}-\varphi^{2}_{\sigma,\textbf{Q}_{m}})\bigl|\vspace{0.2cm}\\
&\ds\leq C(|\varphi^{1}_{\sigma,\textbf{Q}_{m}}|^{\delta}+|\varphi^{2}_{\sigma,\textbf{Q}_{m}}|^{\delta})|\varphi^{1}_{\sigma,\textbf{Q}_{m}}-\varphi^{2}_{\sigma,\textbf{Q}_{m}}|\vspace{0.2cm}\\
&\ds\leq
C(\|\varphi^{1}_{\sigma,\textbf{Q}_{m}}\|_{*}^{\delta}+\|\varphi^{2}_{\sigma,\textbf{Q}_{m}}\|_{*}^{\delta})\|\varphi^{1}_{\sigma,\textbf{Q}_{m}}-\varphi^{2}_{\sigma,\textbf{Q}_{m}}\|_{*}
\Big(\sum_{j=1}^{m} e^{-\gamma|x-Q_{j}|}\Big)^{1+\delta}\vspace{0.2cm}\\
&\ds\leq
C(\|\varphi^{1}_{\sigma,\textbf{Q}_{m}}\|_{*}^{\delta}+\|\varphi^{2}_{\sigma,\textbf{Q}_{m}}\|_{*}^{\delta})\|\varphi^{1}_{\sigma,\textbf{Q}_{m}}-\varphi^{2}_{\sigma,\textbf{Q}_{m}}\|_{*}
\Big(\sum_{j=1}^{m} e^{-\gamma|x-Q_{j}|}\Big).
\end{array}
\end{equation}

From \eqref{2.41} and \eqref{2.42}, we can have
$$
\|\mathcal {N}(\varphi_{\sigma,\textbf{Q}_{m}})\|_{*}\leq C\|\varphi_{\sigma,\textbf{Q}_{m}}\|_{*}^{1+\delta}
$$
and
$$\|\mathcal {N}(\varphi^{1}_{\sigma,\textbf{Q}_{m}})-\mathcal
{N}(\varphi^{2}_{\sigma,\textbf{Q}_{m}})\|_{*}\leq
C(\|\varphi^{1}_{\sigma,\textbf{Q}_{m}}\|_{*}^{\delta}+\|\varphi^{2}_{\sigma,\textbf{Q}_{m}}\|_{*}^{\delta})
\|\varphi^{1}_{\sigma,\textbf{Q}_{m}}-\varphi^{2}_{\sigma,\textbf{Q}_{m}}\|_{*}.
$$
\end{proof}

Now, we are ready to prove Proposition \ref{prop2.3}.

\begin{proof}[\textbf{Proof of Proposition \ref{prop2.3}.}]
We will use the contraction theorem to prove it. Observe that $\varphi_{\sigma,\textbf{Q}_{m}}$ solves \eqref{2.27} if and only if
\begin{equation}\label{2.43}
\varphi_{\sigma,\textbf{Q}_{m}}= \mathcal {A}(-\mathcal {S}(z_{\textbf{Q}_{m}}) + \mathcal {N}(\varphi_{\sigma,\textbf{Q}_{m}}))
\end{equation}
where $\mathcal {A}$ is the operator introduced in \eqref{2.25}. In other words, $\varphi_{\sigma,\textbf{Q}_{m}}$ solves \eqref{2.27} if and only if $\varphi_{\sigma,\textbf{Q}_{m}}$
is a fixed point for the operator
$$\mathcal {T}(\varphi_{\sigma,\textbf{Q}_{m}}):= \mathcal {A}(-\mathcal {S}(z_{\textbf{Q}_{m}}) + \mathcal
{N}(\varphi_{\sigma,\textbf{Q}_{m}})).
$$

Define
$$\mathcal {B}=\Bigl\{\varphi_{\sigma,\textbf{Q}_{m}}\in H^{1}(\mathbb{R}^{N},\mathbb{C}): \|\varphi_{\sigma,\textbf{Q}_{m}}\|_{*}\leq e^{-(\beta-\tau)\mu}, ~Re\int\varphi_{\sigma,\textbf{Q}_{m}}\bar{D}_{j,k}=0\Bigl\},$$
where $\tau > 0$ small enough. We will prove that $\mathcal {T}$ is a contraction mapping from $\mathcal {B}$ to itself. On one hand, for any
$\varphi_{\sigma,\textbf{Q}_{m}}\in\mathcal {B}$, it follows from Lemmas \ref{lem2.4} and \ref{lem2.5} that
\begin{eqnarray*}
&&\|\mathcal {T}(\varphi_{\sigma,\textbf{Q}_{m}})\|_{*}\leq C\|-\mathcal {S}(z_{\textbf{Q}_{m}}) + \mathcal {N}(\varphi_{\sigma,\textbf{Q}_{m}})\|_{*}\\
&\leq &Ce^{-\beta\mu}+C\|\varphi_{\sigma,\textbf{Q}_{m}}\|_{*}^{1+\delta}
\leq Ce^{-\beta\mu}+Ce^{-(1+\delta)(\beta-\tau)\mu}\leq e^{-(\beta-\tau)\mu}.
\end{eqnarray*}

On the other hand, taking $\varphi_{\sigma,\textbf{Q}_{m}}^{1}$ and $\varphi_{\sigma,\textbf{Q}_{m}}^{2}$ in $\mathcal {B}$, by Lemma \ref{lem2.5} we have
\begin{eqnarray*}
&&\|\mathcal {T}(\varphi_{\sigma,\textbf{Q}_{m}}^{1})
-\mathcal {T}(\varphi_{\sigma,\textbf{Q}_{m}}^{1})\|_{*}\leq C\|\mathcal {N}(\varphi_{\sigma,\textbf{Q}_{m}}^{1}) - \mathcal {N}(\varphi_{\sigma,\textbf{Q}_{m}}^{2})\|_{*}
\\&\leq& C(\|\varphi_{\sigma,\textbf{Q}_{m}}^{1}\|_{*}^{\delta}+\|\varphi_{\sigma,\textbf{Q}_{m}}^{2}\|_{*}^{\delta})\|\varphi_{\sigma,\textbf{Q}_{m}}^{1} - \varphi_{\sigma,\textbf{Q}_{m}}^{2}\|_{*}
\leq \frac{1}{2}\|\varphi_{\sigma,\textbf{Q}_{m}}^{1} - \varphi_{\sigma,\textbf{Q}_{m}}^{2}\|_{*}.
\end{eqnarray*}

Hence by the contraction mapping theorem, for any $(\sigma, \textbf{Q}_{m})\in [0,2\pi]\times \Omega_{m},$
there exists a unique $\varphi_{\sigma,\textbf{Q}_{m}}\in\mathcal {B}$ such that \eqref{2.43} holds. So $$\|
\varphi_{\sigma,\textbf{Q}_{m}}\|_{*}=\|\mathcal {T}(\varphi_{\sigma,\textbf{Q}_{m}})\|_{*}\leq Ce^{-\beta\mu}.$$

Now we need to prove that $\varphi_{\sigma,\textbf{Q}_{m}}$ is $2\pi$-periodic with respect to $\sigma.$
Replacing $\sigma$ by $\sigma+2\pi$ in the above reduction process, we get $\varphi_{\sigma+2\pi,\textbf{Q}_{m}}.$
Since $z_{\textbf{Q}_{m}}$ is $2\pi$-periodic, by the uniqueness
of $\varphi_{\sigma,\textbf{Q}_{m}}$, we see
$\varphi_{\sigma,\textbf{Q}_{m}}=\varphi_{\sigma+2\pi,\textbf{Q}_{m}}.$

Combining \eqref{2.20}, \eqref{2.32}, \eqref{2.39} and \eqref{2.40} we have $$|c_{j,k}|\leq C(e^{-\frac{\beta\mu}{2}}\|\varphi_{\sigma,\textbf{Q}_{m}}\|_{*}+\|\mathcal
{S}(\varphi_{\sigma,\textbf{Q}_{m}})\|_{*}+\|\mathcal {N}(\varphi_{\sigma,\textbf{Q}_{m}})\|_{*})\leq Ce^{-\beta\mu}.$$
\end{proof}

\section{A secondary Lyapunov-Schmidt reduction}

In this section, we present a key estimate on the difference between the solutions in the m-th step and (m+1)-th step. This second Lyapunov-Schmidt reduction
has been used in the paper \cite{aw,lw,w}. For $(\sigma,\textbf{Q}_{m})\in [0,2\pi]\times\Omega_{m}$, we denote $u_{\textbf{Q}_{m}}$ as $z_{\textbf{Q}_{m}}+\varphi_{\sigma,\textbf{Q}_{m}}$,
where $\varphi_{\sigma,\textbf{Q}_{m}}$ is the unique solution given by Proposition \ref{prop2.3}. The main estimate below states that the difference between
$u_{\textbf{Q}_{m+1}}$ and $u_{\textbf{Q}_{m}}+z_{Q_{m+1}}$ is small globally in $H^{1}(\mathbb{R}^{N},\mathbb{C})$ norm.

For this purpose, we now write
\begin{equation}\label{3.1}
u_{\textbf{Q}_{m+1}}=u_{\textbf{Q}_{m}}+z_{Q_{m+1}}+\phi_{m+1}=:\bar{u}+\phi_{m+1}.
\end{equation}

By Proposition \ref{prop2.3}, we can easily obtain that
\begin{equation}\label{3.2}
\|\phi_{m+1}\|_{*}\leq Ce^{-\beta\mu}.
\end{equation}
However the estimate \eqref{3.2} is not sufficient. We need a crucial estimate for $\phi_{m+1}$ which will be given later. (In the following we will always assume
that $\gamma > \frac{1}{2}$) In order to obtain the crucial estimate, we will need the following lemma.

\begin{lem}\label{lem3.1}
(Lemma 2.3, \cite{bl}) For $|Q_{j}-Q_{k}|\geq\mu$ large, it holds that
\begin{equation}\label{3.3}
\int f(w(x-Q_{j}))w(x-Q_{k})dx=(\vartheta+e^{-\beta\mu})w(|Q_{j}-Q_{k}|)
\end{equation}
 for some $\beta > 0$ independent of large $\mu$ and
\begin{equation}\label{3.4}
\vartheta=\int f(w)e^{-x_{1}}dx >0.
\end{equation}
\end{lem}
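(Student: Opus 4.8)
The plan is to prove Lemma~\ref{lem3.1}, i.e.\ the asymptotic formula
$$
\int f(w(x-Q_{j}))w(x-Q_{k})\,dx = (\vartheta+e^{-\beta\mu})\,w(|Q_{j}-Q_{k}|)
$$
for $|Q_{j}-Q_{k}|\ge \mu$ large, where $\vartheta=\int f(w)e^{-x_{1}}\,dx>0$. By translation invariance we may put $Q_{j}=0$ and write $R:=|Q_{k}-Q_{j}|=|Q_{k}|$, so the claim is $\int f(w(x))w(x-Q_{k})\,dx = (\vartheta+O(e^{-\beta\mu}))w(R)$. The idea is that the left integrand is concentrated near the origin because $f(w)$ decays like $w^{1+\delta}$ (hence at least like $e^{-(1+\delta)|x|}$ up to polynomial factors, and in any case $f(w)$ is integrable against any slowly growing exponential), while in that region $w(x-Q_{k})$ can be expanded around its value at $x=0$ using the known asymptotics of the ground state.

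First I would recall the precise decay asymptotics of $w$: since $-\Delta w+w=f(w)$ with $f(w)=o(w)$ near $0$, the solution behaves like $w(x)=c_{N}|x|^{-(N-1)/2}e^{-|x|}(1+o(1))$ as $|x|\to\infty$, and moreover $w(x-Q_{k})/w(R)\to e^{\,\omega\cdot x}$ locally uniformly in $x$, where $\omega=Q_{k}/|Q_{k}|$ is the unit vector pointing toward $Q_{k}$; the polynomial prefactors cancel in the ratio because $|x-Q_{k}|=R-\omega\cdot x+O(|x|^{2}/R)$. Thus, dividing through by $w(R)$,
$$
\frac{1}{w(R)}\int f(w(x))w(x-Q_{k})\,dx \;=\; \int f(w(x))\,\frac{w(x-Q_{k})}{w(R)}\,dx \;\longrightarrow\; \int f(w(x))e^{\,\omega\cdot x}\,dx .
$$
By the rotational invariance of $w$ the last integral equals $\int f(w)e^{-x_{1}}\,dx=\vartheta$ (choose coordinates so $\omega=-e_{1}$; the sign is immaterial since $w$ is even, but I will be careful to match the paper's convention). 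Positivity of $\vartheta$ is immediate: $f(w)>0$ wherever $w>0$ once $f$ is superlinear and $f(0)=0$, so $\int f(w)e^{-x_{1}}\,dx>0$. (If $f$ could change sign one would instead argue via $\int f(w)e^{-x_1}\,dx = \int(w-\Delta w)e^{-x_1}\,dx$ and an integration by parts, which is in fact the cleanest route and I would use it to get the clean identity $\vartheta = \int f(w)e^{-x_1}$ with the stated sign.)

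The main work is to upgrade the qualitative limit to the quantitative error term $O(e^{-\beta\mu})$. I would split $\R^{N}$ into the region $|x|\le R/2$ and its complement. On $|x|\le R/2$ one writes $w(x-Q_{k})=w(R)e^{\,\omega\cdot x}(1+E(x,R))$ with an explicit error $E(x,R)$ controlled by the next-order term in the ground-state expansion; since $f(w(x))e^{\,\omega\cdot x}$ is integrable (it decays like $e^{-\delta|x|}$ times polynomials once $|\omega\cdot x|\le|x|$ and $f(w)\lesssim w^{1+\delta}$), multiplying by $E(x,R)=O((1+|x|^{2})/R)$ and integrating gives a contribution $O(1/R)\cdot w(R)$, hence $O(e^{-\beta\mu})w(R)$ after absorbing the algebraic factor into the exponential (taking $\beta$ slightly below $1$); similarly the tail $\int_{|x|\le R/2}f(w(x))e^{\omega\cdot x}\,dx$ differs from $\vartheta$ by $\int_{|x|>R/2}f(w)e^{\omega\cdot x}\lesssim e^{-\beta R}$. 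On $|x|>R/2$ one estimates crudely: there $w(x)\le Ce^{-R/2}$, so $f(w(x))\le Cw(x)^{1+\delta}\le Ce^{-\beta R}w(x)$, and $\int w(x)^{1+\delta}w(x-Q_k)\,dx \le Ce^{-\beta\mu}w(R)$ by the standard convolution-type estimate $\int e^{-a|x|}e^{-|x-Q_k|}\,dx\le Ce^{-|Q_k|}$ for $a>1$ (this is exactly the kind of bound already exploited repeatedly in Section~2, e.g.\ in \eqref{2.8}--\eqref{2.13}). Collecting the two regions yields the claim. The only genuine obstacle is bookkeeping: making sure the polynomial prefactors in the ground-state asymptotics are handled so that the final error is a clean $e^{-\beta\mu}$ with $\beta\in(0,1)$ independent of $\mu$, and matching the coordinate convention so that the limiting constant comes out exactly as $\vartheta=\int f(w)e^{-x_{1}}\,dx$. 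Since this lemma is quoted verbatim from \cite{bl} (Lemma 2.3), I would in practice simply cite that reference and, if a self-contained argument is wanted, present the splitting above.
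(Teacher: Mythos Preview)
The paper does not give a proof; it simply cites \cite{bl} (Bahri--Li, Lemma~2.3). Your final remark---that in practice one just cites the reference---is exactly what the paper does, so on that level you match the paper.

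Your supplementary sketch is the standard argument and is correct up to one step. The claim that the inner-region error $O(1/R)\cdot w(R)$ becomes ``$O(e^{-\beta\mu})w(R)$ after absorbing the algebraic factor into the exponential'' does not hold: the quantity that must be $O(e^{-\beta\mu})$ is the \emph{ratio} $\bigl(\int f(w)\,w(\cdot-Q_k)\bigr)/w(R)-\vartheta$, and your splitting only yields $O(1/R)\le O(1/\mu)$ for this, which is polynomially small in $\mu$, not exponentially. In dimension $N\ge 2$ this limitation is intrinsic, because the polynomial prefactor $r^{-(N-1)/2}$ in the asymptotics $w(r)\sim c_N r^{-(N-1)/2}e^{-r}$ forces $O(1/R)$ corrections in the ratio $w(x-Q_k)/w(R)$ that cannot be improved by any choice of cutoff radius. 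Thus the exponent in the printed error term is slightly optimistic; the honest conclusion from your argument is an $o(1)$ (indeed $O(\mu^{-1})$) correction to $\vartheta$. This is all that is needed wherever the lemma is invoked later (see \eqref{4.21}, \eqref{4.24}, \eqref{4.32}), so nothing downstream breaks, but you should not assert that $1/R=O(e^{-\beta\mu})$.
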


\begin{lem}\label{lem3.2}
Let $\mu$, $\epsilon$ be as in Proposition \ref{prop2.3}. Then it holds
\begin{equation}\label{3.5}
\begin{array}{ll}
&\|\phi_{m+1}\|_{H^{1}(\mathbb{R}^{N})}\vspace{0.2cm}\\
&
\leq \ds C\Bigl[\epsilon\int|\tilde{V}(x)||w_{Q_{m+1}}|+2\epsilon\int|\tilde{A}(x)||\nabla w_{Q_{m+1}}|+\epsilon\int|divA(x)||w_{Q_{m+1}}|
\vspace{0.2cm}\\
&\quad\quad\ds +\epsilon^{2}\int|\tilde{A}(x)|^{2}|w_{Q_{m+1}}|+e^{-\beta\mu}\bigl(\sum_{j=1}^{m}w(|Q_{m+1}-Q_{j}|)\bigl)^{\frac{1}{2}}
+\epsilon\bigl(\int|\tilde{V}(x)|^{2}|w_{Q_{m+1}}|^{2}\bigl)^{\frac{1}{2}}\vspace{0.2cm}\\
&\quad\quad
+\epsilon\bigl(\ds\int|\tilde{A}(x)|^{2}|\nabla
w_{Q_{m+1}}|^{2}\bigl)^{\frac{1}{2}}+\epsilon\bigl(\ds\int|divA(x)|^{2}|w_{Q_{m+1}}|^{2}\bigl)^{\frac{1}{2}}
+\epsilon^{2}\bigl(\ds\int|\tilde{A}(x)|^{4}|w_{Q_{m+1}}|^{2}\bigl)^{\frac{1}{2}}\Bigl]
\end{array}
\end{equation}
for some constant $C > 0$, $ \beta> 0$ independent of $\mu,$ $m,$ $\gamma$ and $\textbf{Q}_{m+1} \in\Omega_{m+1}$.
\end{lem}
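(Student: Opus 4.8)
The plan is to estimate $\phi_{m+1}$ by testing the equation it satisfies against $\phi_{m+1}$ itself and exploiting coercivity of the linearized operator on the orthogonal complement of the approximate kernels. First I would derive the equation for $\phi_{m+1}$: subtracting the equations $\mathcal{S}(u_{\textbf{Q}_{m+1}})=\sum c_{j,k}^{(m+1)}D_{j,k}$ and $\mathcal{S}(u_{\textbf{Q}_{m}})=\sum c_{j,k}^{(m)}D_{j,k}$ together with the definition $\bar u = u_{\textbf{Q}_m}+z_{Q_{m+1}}$, I obtain
\begin{equation*}
L_{\bar u}(\phi_{m+1}) = -\mathcal{S}(\bar u) + \mathcal{N}_{\bar u}(\phi_{m+1}) + \sum_{j=1}^{m+1}\sum_{k=1}^{N+1}c_{j,k}^{(m+1)}D_{j,k} - \sum_{j=1}^{m}\sum_{k=1}^{N+1}c_{j,k}^{(m)}D_{j,k},
\end{equation*}
where $L_{\bar u}$ is the linearization at $\bar u$ and $\mathcal{N}_{\bar u}$ is the corresponding quadratic remainder. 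Here $\mathcal{S}(\bar u)$ must be expanded: since $\mathcal{S}(u_{\textbf{Q}_m})$ and $\mathcal{S}(z_{Q_{m+1}})$ are already controlled, the genuinely new contribution is the interaction term $f(\bar u)-f(u_{\textbf{Q}_m})-f(z_{Q_{m+1}})$ plus the magnetic/potential pieces $-\epsilon\tilde V z_{Q_{m+1}}$, $\frac{\epsilon}{i}\mathrm{div}\tilde A\, z_{Q_{m+1}}$, $\frac{2\epsilon}{i}\tilde A\cdot\nabla z_{Q_{m+1}}$, $-2\epsilon A_0\cdot\tilde A\, z_{Q_{m+1}}$, $-\epsilon^2|\tilde A|^2 z_{Q_{m+1}}$ evaluated on the new spike.

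Next I would decompose $\phi_{m+1} = \phi_{m+1}^\perp + \sum \alpha_{j,k}D_{j,k}$ where $\phi_{m+1}^\perp$ is $L^2$-orthogonal (in the $Re\int\cdot\bar D_{j,k}$ sense) to all the $D_{j,k}$, and use the coercivity estimate (the $H^1$ analogue of Lemma \ref{lem2.1}, available because $w$ is non-degenerate) to get $\|\phi_{m+1}^\perp\|_{H^1}\le C\|\,\text{RHS}\,\|_{(H^1)^*}$. The $L^2$-pairing of the RHS against a test function $\psi$ splits into: the interaction term, which by Lemma \ref{lem3.1} contributes $(\vartheta+e^{-\beta\mu})\sum_{j=1}^m w(|Q_{m+1}-Q_j|)$ and after Cauchy--Schwarz gives the $e^{-\beta\mu}(\sum_j w(|Q_{m+1}-Q_j|))^{1/2}$ term once one uses $\|\phi_{m+1}\|_*\le Ce^{-\beta\mu}$ from \eqref{3.2}; the linear electromagnetic terms, each of which pairs to something like $\epsilon\int|\tilde V||w_{Q_{m+1}}||\psi|\le \epsilon(\int|\tilde V|^2|w_{Q_{m+1}}|^2)^{1/2}\|\psi\|_{L^2}$ — giving the square-root terms — or, pairing instead against $\phi_{m+1}^\perp$ directly and bounding in $\|\cdot\|_*$, the non-square-root terms $\epsilon\int|\tilde V||w_{Q_{m+1}}|$; and the quadratic term $\mathcal{N}_{\bar u}(\phi_{m+1})$, which by Lemma \ref{lem2.5}-type bounds is of order $\|\phi_{m+1}\|_*^\delta\|\phi_{m+1}\|_{H^1}$, hence absorbable into the left side for $\mu$ large. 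Finally the Lagrange multiplier differences $\sum c^{(m+1)}D - \sum c^{(m)}D$ drop out after projecting onto $(\text{span}\,D_{j,k})^\perp$, and the coefficients $\alpha_{j,k}$ are estimated by testing against $D_{j,k}$, which again produces only the interaction and electromagnetic quantities already on the right side.

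The main obstacle I expect is bookkeeping the interaction term $f(\bar u)-f(u_{\textbf{Q}_m})-f(z_{Q_{m+1}})$ carefully enough: one needs that its pairing with $\phi_{m+1}$ is not merely $O(e^{-\beta\mu})$ in $\|\cdot\|_*$ (which would be too lossy) but genuinely controlled by $(\sum_{j=1}^m w(|Q_{m+1}-Q_j|))^{1/2}$ times a small factor. This requires splitting $\mathbb{R}^N$ into the region near $Q_{m+1}$ (where $u_{\textbf{Q}_m}$ is exponentially small, of size $\sum_j w(|Q_{m+1}-Q_j|)$) and regions near the old spikes, and in each piece using the $C^{1+\delta}$ regularity of $f$ together with Lemma \ref{lem3.1} to convert the overlap integrals into $\sum_j w(|Q_{m+1}-Q_j|)$. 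A secondary technical difficulty is that the linearization is taken at $\bar u$ rather than at $z_{\textbf{Q}_{m+1}}$, so one must first show $L_{\bar u}$ and $L_{z_{\textbf{Q}_{m+1}}}$ differ by a term of $\|\cdot\|_*$-size $Ce^{-\beta\mu}$ (via $|f'(\bar u)-f'(z_{\textbf{Q}_{m+1}})|\le C|\phi|^\delta + \text{interaction}$) so that the invertibility theory of Section 2 transfers; this is routine but must be stated. Once these are in place, collecting terms yields precisely the bound \eqref{3.5}.
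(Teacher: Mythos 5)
There is a genuine gap in your argument, and it concerns precisely the point that makes this lemma a \emph{secondary} Lyapunov--Schmidt reduction rather than a repetition of Section 2. You decompose $\phi_{m+1}=\phi_{m+1}^{\perp}+\sum\alpha_{j,k}D_{j,k}$ with $\phi_{m+1}^{\perp}$ orthogonal only to the approximate kernels $D_{j,k}$, and then invoke ``coercivity (the $H^1$ analogue of Lemma \ref{lem2.1}, available because $w$ is non-degenerate)''. That coercivity is false. The linearized operator $\Delta-1+f'(w)$ has eigenvalues $\lambda_1>\cdots>\lambda_n>\lambda_{n+1}=0>\lambda_{n+2}>\cdots$, and non-degeneracy only fixes the kernel ($\lambda_{n+1}=0$); the $n$ positive eigenvalues $\lambda_1,\ldots,\lambda_n$ make the quadratic form $\mathrm{Re}\int(-\bar L\psi)\bar\psi$ indefinite on $(\mathrm{span}\,D_{j,k})^{\perp}$. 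Lemma \ref{lem2.1} itself does not rely on coercivity at all: it is a $\|\cdot\|_*$ a priori bound obtained via the barrier function $E(x)$ and a blow-up argument, and it does not transfer to an $H^1$ energy estimate of the kind you need.

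The paper's proof supplies the missing idea by a further decomposition
\begin{equation*}
\phi_{m+1}=\psi+\sum_{j=1}^{m+1}\sum_{l=1}^{n}g_{j,l}\tilde\varphi_{j,l}+\sum_{j=1}^{m+1}\sum_{k=1}^{N+1}d_{j,k}D_{j,k},
\end{equation*}
where $\tilde\varphi_{j,l}$ are localized copies of the eigenfunctions with \emph{positive} eigenvalues, and $\psi$ is required to be orthogonal to both families. The coercivity $\mathrm{Re}\int(-\bar L\psi)\bar\psi\geq c_0\|\psi\|_{H^1}^2$ is then established by a compactness argument (see \eqref{3.25}--\eqref{3.30}) and crucially uses this double orthogonality. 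The coefficients $g_{j,l}$ are then controlled by testing against $\tilde\varphi_{j,l}$ and using $\bar L(\tilde\varphi_{j,k})=\lambda_k\tilde\varphi_{j,k}+O(e^{-\beta\mu})$; the coefficients $d_{j,k}$ vanish for $j\leq m$ by the matching orthogonality constraints on $\varphi_{\sigma,\textbf{Q}_m}$ and $\varphi_{\sigma,\textbf{Q}_{m+1}}$, and $d_{m+1,k}$ is exponentially small. Your sketch of the RHS bookkeeping (interaction term via Lemma \ref{lem3.1}, electromagnetic terms via Cauchy--Schwarz, quadratic terms absorbed) is along the right lines once the decomposition is corrected, but without projecting out the positive-eigenvalue directions the central energy estimate on which everything hinges does not hold.

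As a smaller point, the paper does not literally linearize at $\bar u$ and then compare to the linearization at $z_{\textbf{Q}_{m+1}}$; it defines $f'(\tilde u)$ as the divided difference $\bigl(f(\bar u+\phi_{m+1})-f(\bar u)\bigr)/\phi_{m+1}$, so the equation $\bar L\phi_{m+1}=-\bar{\mathcal S}+\sum c_{j,k}D_{j,k}$ holds exactly and no remainder $\mathcal N_{\bar u}$ appears. This is a cleaner device than your comparison of two linearizations, though your route could also be made to work.
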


\begin{proof}
To prove \eqref{3.5}, we need to perform a further decomposition.

As we mentioned before, the following eigenvalue problem
$$\Delta
\varphi -\varphi + f'(w)\varphi = \lambda\varphi, ~~\varphi \in H^{1}(\mathbb{R}^{N}),$$
admits the following set of eigenvalues
$$
\lambda_{1}>\lambda_{2}>\ldots>\lambda_{n}>\lambda_{n+1}=0>\lambda_{n+2}\ldots.
$$
We denote the eigenfunctions corresponding to the positive eigenvalues $\lambda_{j}$ as $\varphi_{j},$ $j=1,\ldots,n.$

Now, we have the eigenvalue $\lambda_{k}(k=1,\ldots,n)$ with eigenfunction
$\tilde{\varphi}_{0,k}=e^{i\sigma+iA_{0}\cdot x}\varphi_{k}$ of the following linearized operator
\begin{equation}\label{3.6}
-\Bigl(\frac{\nabla}{i}-A_{0}\Bigl)^{2}\varphi-\varphi+f'(w)\varphi=\lambda\varphi.
\end{equation}
We fix $\tilde{\varphi}_{0,k}$ such that $\max_{x\in\mathbb{R}^{N}} |\tilde{\varphi}_{0,k}| = 1$. Denote by $\tilde{\varphi}_{j,k} =\eta_{j}\tilde{\varphi}_{0,k}(x -Q_{j})$, where $\eta_{j}$ is the cut-off function introduced in section 1.

By the equations satisfied by $\phi_{m+1}$, we have
\begin{equation}\label{3.7}
\bar{L}\phi_{m+1}=-\bar{\mathcal {S}}+\sum_{j=1}^{m+1}\sum_{k=1}^{N+1}c_{j,k}D_{j,k}
\end{equation}
for some constants ${c_{j,k}}$, where
$$\bar{L}=-\Bigl(\frac{\nabla}{i}-A_{\epsilon}(x)\Bigl)^{2}-V_{\epsilon}(x)+f'(\tilde{u}),$$

where
\begin{eqnarray*}f'(\tilde{u})=
\left\{%
\begin{array}{ll}
   \ds \frac{f(\bar{u}+\phi_{m+1})-f(\bar{u})}{\phi_{m+1}},~~&\text{if}~~\phi_{m+1}\neq0, \vspace{0.2cm}\\
\ds f'(\bar{u}),~~~~~~~~~~~~~~~~~~~~~~&\text{if}~~\phi_{m+1}=0,
\end{array}
\right.
\end{eqnarray*}

and
\begin{equation}\label{3.8}
\begin{array}{ll}
\bar{\mathcal {S}}&=\ds f(u_{\textbf{Q}_{m}}-z_{Q_{m+1}})-f(u_{\textbf{Q}_{m}})-(1+\epsilon \tilde{V}(x))z_{Q_{m+1}}-\Big(\frac{\nabla}{i}-A_{\epsilon}(x)\Big)^{2}z_{Q_{m+1}}\vspace{0.2cm}\\
&=\ds f(u_{\textbf{Q}_{m}}-z_{Q_{m+1}})-f(u_{\textbf{Q}_{m}})-f(z_{Q_{m+1}})-\epsilon
\tilde{V}(x)z_{Q_{m+1}}+\frac{\epsilon}{i}div\tilde{A}(x)z_{Q_{m+1}}\vspace{0.2cm}\\
&\,\,\,\,\,\,\ds+2\frac{\epsilon}{i}\tilde{A}(x)\cdot\nabla z_{Q_{m+1}}-2\epsilon A_{0}\cdot \tilde{A}z_{Q_{m+1}}-\epsilon^{2}|\tilde{A}(x)|^{2}z_{Q_{m+1}}\vspace{0.2cm}\\
&=\ds f(u_{\textbf{Q}_{m}}-z_{Q_{m+1}})-f(u_{\textbf{Q}_{m}})-f(z_{Q_{m+1}})-\epsilon
\tilde{V}(x)z_{Q_{m+1}}+\frac{\epsilon}{i}div\tilde{A}(x)z_{Q_{m+1}}\vspace{0.2cm}\\
&\,\,\,\,\,\,\ds+2\frac{\epsilon}{i}\xi_{j}\tilde{A}(x)\cdot\nabla w_{Q_{m+1}}-\epsilon^{2}|\tilde{A}(x)|^{2}z_{Q_{m+1}}.
\end{array}
\end{equation}

Now we proceed the proof into a few steps.

First we estimate the $L^{2}$-norm of $\bar{\mathcal {S}}$. By the estimate in Proposition \ref{prop2.3}, we have the following estimate
\begin{equation}\label{3.9}
\int|f(u_{\textbf{Q}_{m}}+z_{Q_{m+1}})-f(u_{\textbf{Q}_{m}})-f(z_{Q_{m+1}})|^{2}\leq Ce^{-\beta\mu}\sum_{j=1}^{m}w(|Q_{m+1}-Q_{j}|).
\end{equation}

We also have
$$\int|\epsilon \tilde{V}(x)z_{Q_{m+1}}|^{2}\leq C\epsilon^{2}\int \tilde{V}(x)^{2}w_{Q_{m+1}}^{2},$$
$$\int  \Big|\frac{\epsilon}{i} div\tilde{A} z_{Q_{m+1}} \Big|^{2}\leq C\epsilon^{2}\int |div\tilde{A}|^{2}w_{Q_{m+1}}^{2},$$
$$\int  \Big|2\frac{\epsilon}{i}\xi_{j} \tilde{A}(x)\cdot\nabla w_{Q_{m+1}} \Big|^{2}\leq C\epsilon^{2}\int |\tilde{A}(x)|^{2}|\nabla w_{Q_{m+1}}|^{2}$$
and
\begin{equation}\label{3.10}
\int \bigl|\epsilon^{2}|\tilde{A}(x)|^{2}z_{Q_{m+1}}\bigl|^{2}\leq C\epsilon^{4}\int |\tilde{A}(x)|^{4}w_{Q_{m+1}}^{2}.
\end{equation}

It follows from \eqref{3.8} to \eqref{3.10} that
\begin{equation}\label{3.11}
\begin{array}{ll}
\|\bar{\mathcal {S}}\|^{2}_{L^{2}}&\leq \ds Ce^{-\beta\mu}\sum_{j=1}^{m}w(|Q_{m+1}-Q_{j}|)+C\epsilon^{2}\int \tilde{V}(x)^{2}w_{Q_{m+1}}^{2}+C\epsilon^{2}\int
|div\tilde{A}|^{2}w_{Q_{m+1}}^{2}\vspace{0.2cm}\\
&\,\,\,\,\,\,\ds+C\epsilon^{2}\int |\tilde{A}(x)|^{2}|\nabla w_{Q_{m+1}}|^{2}+C\epsilon^{4}\int |\tilde{A}(x)|^{4}w_{Q_{m+1}}^{2}.
\end{array}
\end{equation}

By the estimate \eqref{3.2}, we have the following estimate
\begin{equation}\label{3.12}
\Bigl|\tilde{u}-\sum_{j=1}^{m+1}z(x-Q_{j})\Bigl|=O(e^{-\beta\mu}).
\end{equation}

Decompose $\phi_{m+1}$ as
\begin{equation}\label{3.13}
\phi_{m+1}=\psi+\sum_{j=1}^{m+1}\sum_{l=1}^{n}g_{j,l}\tilde{\varphi}_{j,l}+\sum_{j=1}^{m+1}\sum_{k=1}^{N+1}d_{j,k}D_{j,k}
\end{equation}
for some $g_{j,l}$ , $d_{j,k}$ such that
\begin{equation}\label{3.14}
Re\int\psi\bar{\tilde{\varphi}}_{j,l}=Re\int\psi \bar{D}_{j,k}=0,j=1,\ldots,m+1,k=1,\ldots,N+1,l=1,\ldots,n.
\end{equation}

Since
\begin{equation}\label{3.15}
\phi_{m+1}=\varphi_{\sigma,\textbf{Q}_{m+1}}-\varphi_{\sigma,\textbf{Q}_{m}},
\end{equation}
we have for $j = 1, \ldots, m,$
\begin{equation}\label{3.16}
d_{j,k}=Re\int\phi_{m+1}\bar{D}_{j,k}=Re\int(\varphi_{\sigma,\textbf{Q}_{m+1}}-\varphi_{\sigma,\textbf{Q}_{m}})\bar{D}_{j,k}=0
\end{equation}
and
\begin{equation}\label{3.17}
d_{m+1,k}=Re\int\phi_{m+1}\bar{D}_{m+1,k}=Re\int(\varphi_{\sigma,\textbf{Q}_{m+1}}
-\varphi_{\sigma,\textbf{Q}_{m}})\bar{D}_{m+1,k}=-Re\int\varphi_{\sigma,\textbf{Q}_{m}}\bar{D}_{m+1,k},
\end{equation}
where we use the orthogonality conditions satisfied by $\varphi_{\sigma,\textbf{Q}_{m}}$ and $\varphi_{\sigma,\textbf{Q}_{m+1}}$. Hence by Proposition \ref{prop2.3}, we have
\begin{equation}\label{3.18}
   \ds d_{j,k}=0,~for~ j = 1, \ldots, m,\,\,\,\text{and}\,\,\,
\ds |d_{m+1,k}|\leq Ce^{-\beta\mu}\sum_{j=1}^{m}e^{-\gamma|Q_{j}-Q_{m+1}|}.
\end{equation}
By \eqref{3.13}, we can rewrite \eqref{3.7} as
\begin{equation}\label{3.19}
\begin{array}{ll}
\bar{L}(\psi)+\ds\sum_{j=1}^{m+1}\sum_{l=1}^{n}g_{j,l}\bar{L}(\tilde{\varphi}_{j,l})+\ds\sum_{j=1}^{m+1}\sum_{k=1}^{N+1}d_{j,k}\bar{L}(D_{j,k})
=-\bar{\mathcal {S}}+\ds\sum_{j=1}^{m+1}\sum_{k=1}^{N+1}c_{j,k}D_{j,k}.
\end{array}
\end{equation}

In order to estimate the coefficients $g_{j,l}$, we use the equation \eqref{3.19}. First, multiplying \eqref{3.19} by $\tilde{\varphi}_{j,l}$ and integrating over $\mathbb{R}^{N}$,
we have
\begin{equation}\label{3.20}
\begin{array}{ll}
\ds~Re~g_{j,l}\int\bar{L}(\tilde{\varphi}_{j,l})\bar{\tilde{\varphi}}_{j,l}&=-\ds\sum_{j=1}^{m+1}\sum_{k=1}^{N+1}Re~d_{j,k}\int\bar{L}(D_{j,k})\bar{\tilde{\varphi}}_{j,l}
-\sum_{k\neq l}Re~g_{j,k}\int\bar{L}(\tilde{\varphi}_{j,k})\bar{\tilde{\varphi}}_{j,l}
-Re\int\bar{\mathcal {S}}\bar{\tilde{\varphi}}_{j,l}\vspace{0.2cm}\\
&\,\,\,\,\,\,\ds-Re\int\bar{L}(\psi)\bar{\tilde{\varphi}}_{j,l},
\end{array}
\end{equation}
where
\begin{eqnarray}\label{3.21}
\left\{%
\begin{array}{ll}
   \ds \Bigl|Re\int\bar{\mathcal {S}}\bar{\tilde{\varphi}}_{j,l}\Bigl|\leq Ce^{-\beta\mu}e^{-\gamma|Q_{j}-Q_{m+1}|}+\epsilon\Bigl|Re\int \tilde{V}(x)z_{Q_{m+1}}\bar{\tilde{\varphi}}_{j,l}\Bigl|+2\epsilon\Bigl|Re\int \frac{1}{i}\xi_{j}\tilde{A}(x)\cdot\nabla w_{Q_{m+1}}\bar{\tilde{\varphi}}_{j,l}\Bigl| \vspace{0.2cm}\\
\quad\quad\quad\quad\quad\quad
+\epsilon\Bigl|Re\ds\int \frac{1}{i} div \tilde{A}(x)z_{Q_{m+1}}\bar{\tilde{\varphi}}_{j,l}\Bigl|+\epsilon^{2}\Bigl|Re\int |\tilde{A}(x)|^{2}z_{Q_{m+1}}\bar{\tilde{\varphi}}_{j,l}\Bigl|,~j=1,\ldots,m,\vspace{0.2cm}\\
\ds \Bigl|Re\int\bar{\mathcal {S}}\bar{\tilde{\varphi}}_{m+1,l}\Bigl|\leq Ce^{-\beta\mu}\sum_{j=1}^{m}e^{-\gamma|Q_{j}-Q_{m+1}|}+\epsilon\Bigl|Re\int \tilde{V}(x)z_{Q_{m+1}}\bar{\tilde{\varphi}}_{m+1,l}\Bigl| \vspace{0.2cm}\\
\quad\quad\quad\quad\quad\quad\quad
+2\epsilon\Bigl|Re\ds\int\frac{1}{i}\xi_{j} \tilde{A}(x)\cdot\nabla w_{Q_{m+1}}\bar{\tilde{\varphi}}_{m+1,l}\Bigl|+\epsilon\Bigl|Re\int\frac{1}{i} div
\tilde{A}(x)z_{Q_{m+1}}\bar{\tilde{\varphi}}_{m+1,l}\Bigl|\vspace{0.2cm}\\
\quad\quad\quad\quad\quad\quad\quad
+\epsilon^{2}\Bigl|Re\ds\int |\tilde{A}(x)|^{2}z_{Q_{m+1}}\bar{\tilde{\varphi}}_{m+1,l}\Bigl|.
\end{array}
\right.
\end{eqnarray}

By the definition of $\tilde{\varphi}_{j,l}$, we have
$$\bar{L}(\tilde{\varphi}_{j,k}) = \lambda_{k}\tilde{\varphi}_{j,k} + O ( e^{-\beta\mu} ),$$
thus one has
\begin{equation}\label{3.22}
Re\int\bar{L}(\tilde{\varphi}_{j,k})\bar{\tilde{\varphi}}_{j,l} = -\delta_{k,l}\lambda_{k} \int\tilde{\varphi}_{0,l}\tilde{\varphi}_{0,k} + O ( e^{-\beta\mu} ).
\end{equation}
Recall the definition of $\varphi$, we have
\begin{equation*}
\begin{array}{ll}
\ds~Re~\int\bar{L}(\psi)\bar{\tilde{\varphi}}_{j,l}&=-\ds~Re~\int\psi\bar{\bar{L}}(\tilde{\varphi}_{j,l})=\ds -\lambda_{l}\int\tilde{\varphi}_{j,l}\psi + O ( e^{-\beta\mu} )\|\psi\|_{H^{1}(B_{\frac{\mu}{2}} (Q_{j}))}\vspace{0.2cm}\\
&=\ds O ( e^{-\beta\mu} )\|\psi\|_{H^{1}(B_{\frac{\mu}{2}} (Q_{j}))}.
\end{array}
\end{equation*}

Combining \eqref{3.18}, \eqref{3.20}, \eqref{3.21} and \eqref{3.22}, and the orthogonal conditions satisfied by $\psi$
\begin{eqnarray}\label{3.23}
\left\{%
\begin{array}{ll}
   \ds |g_{j,l}|\leq Ce^{-\beta\mu}e^{-\gamma|Q_{j}-Q_{m+1}|}+\epsilon\Bigl|Re\int \tilde{V}(x)z_{Q_{m+1}}\bar{\tilde{\varphi}}_{j,l}\Bigl|+2\epsilon\Bigl|Re\int \frac{1}{i}\xi_{j}\tilde{A}(x)\cdot\nabla w_{Q_{m+1}}\bar{\tilde{\varphi}}_{j,l}\Bigl| \vspace{0.2cm}\\
\quad\quad\quad+\epsilon\Bigl|Re\ds\int \frac{1}{i} div \tilde{A}(x)z_{Q_{m+1}}\bar{\tilde{\varphi}}_{j,l}\Bigl|+\epsilon^{2}\Bigl|Re\int |\tilde{A}(x)|^{2}z_{Q_{m+1}}\bar{\tilde{\varphi}}_{j,l}\Bigl|\vspace{0.2cm}\\
\quad\quad\quad+e^{-\beta\mu}\|\psi\|_{H^{1}(B_{\frac{\mu}{2}} (Q_{j}))},~j=1,\ldots,m,\vspace{0.2cm}\\
\ds |g_{m+1,l}|\leq Ce^{-\beta\mu}\sum_{j=1}^{m}e^{-\gamma|Q_{j}-Q_{m+1}|}+\epsilon\Bigl|Re\int \tilde{V}(x)z_{Q_{m+1}}\bar{\tilde{\varphi}}_{m+1,l}\Bigl| \vspace{0.2cm}\\
\quad\quad\quad+2\epsilon\Bigl|Re\ds\int\frac{1}{i} \xi_{j}\tilde{A}(x)\cdot\nabla w_{Q_{m+1}}\bar{\tilde{\varphi}}_{m+1,l}\Bigl|+\epsilon\Bigl|Re\int\frac{1}{i} div \tilde{A}(x)z_{Q_{m+1}}\bar{\tilde{\varphi}}_{m+1,l}\Bigl|\vspace{0.2cm}\\
\quad\quad\quad+\epsilon^{2}\Bigl|Re\ds\int |\tilde{A}(x)|^{2}z_{Q_{m+1}}\bar{\tilde{\varphi}}_{m+1,l}\Bigl|+e^{-\beta\mu}\|\psi\|_{H^{1}(B_{\frac{\mu}{2}} (Q_{m+1} ))}.
\end{array}
\right.
\end{eqnarray}

Next, we estimate $\psi$. Multiplying \eqref{3.19} by $\psi$ and integrating over $\mathbb{R}^{N}$, we find
\begin{equation}\label{3.24}
\begin{array}{ll}
Re\ds\int\bar{L}(\psi)\bar{\psi}&=-Re\ds\int\bar{\mathcal {S}}\bar{\psi}-\sum_{j=1}^{m+1}\sum_{k=1}^{N+1}d_{j,k}Re\int\bar{L}(D_{j,k})\bar{\psi}
-\ds\sum_{j=1}^{m+1}\sum_{l=1}^{n}g_{j,l}Re\int\bar{L}(\varphi_{j,l})\bar{\psi}.
\end{array}
\end{equation}

We claim that
\begin{equation}\label{3.25}
Re\int(-\bar{L}(\psi)\bar{\psi})\geq c_{0}\|\psi\|^{2}_{H^{1}}
\end{equation}
for some constant $c_{0} > 0$.

Since the approximate solution is exponentially decay away from the points $Q_{j}$ , we have
\begin{equation}\label{3.26}
Re\int_{\mathbb{R}^{N}\backslash\cup_{j}B_{\frac{\mu}{2}}(Q_{j})}(-\bar{L}(\psi)\bar{\psi})\geq
\frac{1}{2}\int_{\mathbb{R}^{N}\backslash\cup_{j}B_{\frac{\mu}{2}}(Q_{j})}(|\nabla\psi|^{2}+|\psi|^{2}).
\end{equation}

Now we only need to prove the above estimates in the domain $\cup_{j}B_{\frac{\mu}{2}(Q_{j})}$. We prove it by contradiction. Otherwise, there exists a
sequence $\mu_{n}\rightarrow\infty$, and $Q^{(n)}_{j}$ such that
$$
\int_{B_{\frac{\mu_{n}}{2}}(Q_{j}^{(n)})}(|\nabla\psi_{n}|^{2}+|\psi_{n}|^{2})=1,~Re\int_{B_{\frac{\mu_{n}}{2}}(Q_{j}^{(n)})}(-\bar{L}(\psi_{n})\bar{\psi}_{n})\rightarrow0,~as ~n\rightarrow\infty.
$$
Then we can extract from the sequence $\psi_{n}(\cdot-Q_{j}^{(n)})$ a subsequence which will converge weakly in $H^{1}(\mathbb{R}^{N})$ to $\psi_{\infty}$,
and $\mu_{n}\rightarrow\infty$, we have
\begin{equation}\label{3.27}
\int\Bigl|\bigl(\frac{\nabla}{i}-A_{0}\bigl)\psi_{\infty}\Bigl|^{2}+|\psi_{\infty}|^{2}-f'(e^{i\sigma+iA_{0}\cdot x}w)\psi_{\infty}^{2}=0
\end{equation}
and
\begin{equation}\label{3.28}
Re\int\psi_{\infty}\bar{\tilde{\varphi}}_{0,l}=Re\int\psi_{\infty}\frac{\partial(\overline{e^{i\sigma+iA_{0}\cdot x}w})}{\partial x_{j}}=0,~~ j=1,\ldots,N,l=1,\ldots,n.
\end{equation}

It follows from \eqref{3.27} and \eqref{3.28} that $\psi_{\infty}= 0$. Therefore
\begin{equation}\label{3.29}
\psi_{n}\rightharpoonup0 ~weakly ~in ~H^{1}(\mathbb{R}^{N}).
\end{equation}

Hence, we have
\begin{equation}\label{3.30}
\int_{B_{\frac{\mu_{n}}{2}}(Q_{j}^{(n)})}f'(\tilde{u})\psi_{n}^{2}\rightarrow0,~as ~n\rightarrow\infty.
\end{equation}
Then $$\|\psi_{n}\|_{H^{1}(B_{\frac{\mu_{n}}{2}}(Q_{j}^{(n)}))}\rightarrow0,~as~ n\rightarrow\infty,$$ which contradicts to the assumption
$\|\psi_{n}\|_{H^{1}} = 1$. Therefore \eqref{3.25} holds.

It follows from \eqref{3.24} and \eqref{3.25} that
\begin{equation}\label{3.31}
\begin{array}{ll}
&\|\psi\|^{2}_{H^{1}(\mathbb{R}^{N})}\vspace{0.2cm}\\&\leq \ds C\Bigl(\sum_{j,k}|d_{j,k}| \Bigl|Re\int\bar{L}(D_{j,k})\bar{\psi}\Bigl|+\sum_{j,l}|g_{j,l}| \Bigl|Re\int\bar{L}(\varphi_{j,l})\bar{\psi}\Bigl|
\ds +\Bigl|Re\int\bar{\mathcal {S}}\bar{\psi}\Bigl| \Bigl)  \vspace{0.2cm}\\
&\leq \ds C\Bigl(\sum_{j,k}|d_{j,k}| \|\psi\|_{H^{1}}+\sum_{j,l}|g_{j,l}| \|\psi\|_{H^{1}(B_{\frac{\mu}{2}(Q_{j})})}+\|\bar{\mathcal
{S}}\|_{L^{2}}\|\psi\|_{H^{1}}\Bigl).
\end{array}
\end{equation}

By \eqref{3.23} and \eqref{3.31}, we have
\begin{equation}\label{3.32}
\begin{array}{ll}
\|\psi\|_{H^{1}(\mathbb{R}^{N})}&\leq \ds C\Bigl(\sum_{j,k}|d_{jk}|+e^{-\beta\mu}\sum_{j=1}^{m}e^{-\gamma|Q_{j}-Q_{m+1}|}+\|\bar{\mathcal {S}}\|_{L^{2}}+\epsilon\int|\tilde{V}(x)||w_{Q_{m+1}}|\vspace{0.2cm}\\
&\,\,\,\,\,\,\ds +2\epsilon\int|\tilde{A}(x)||\nabla w_{Q_{m+1}}|+\epsilon\int|div\tilde{A}(x)||w_{Q_{m+1}}|+\epsilon^{2}\int|\tilde{A}(x)|^{2}|w_{Q_{m+1}}|\Bigl).
\end{array}
\end{equation}

From \eqref{3.11}, \eqref{3.18} and \eqref{3.32}, recalling that $ \gamma> \frac{1}{2}$ , we get
\begin{equation}\label{3.33}
\begin{array}{ll}
&\|\phi_{m+1}\|_{H^{1}(\mathbb{R}^{N})}\\&\leq \ds C\Bigl[e^{-\beta\mu}\sum_{j=1}^{m}e^{-\gamma|Q_{j}-Q_{m+1}|}+\epsilon\int|\tilde{V}(x)||w_{Q_{m+1}}|+2\epsilon\int|\tilde{A}(x)||\nabla w_{Q_{m+1}}|\vspace{0.2cm}\\
&\quad\quad\ds +\epsilon\int|div\tilde{A}(x)||w_{Q_{m+1}}|+\epsilon^{2}\int|\tilde{A}(x)|^{2}|w_{Q_{m+1}}|\vspace{0.2cm}\\
&\quad\quad\ds +e^{-\beta\mu}\bigl(\sum_{j=1}^{m}w(|Q_{m+1}-Q_{j}|)\bigl)^{\frac{1}{2}}+\epsilon\bigl(\int|\tilde{V}(x)|^{2}|w_{Q_{m+1}}|^{2}\bigl)^{\frac{1}{2}}\vspace{0.2cm}\\
&\quad\quad\ds +\epsilon\bigl(\int|\tilde{A}(x)|^{2}|\nabla
w_{Q_{m+1}}|^{2}\bigl)^{\frac{1}{2}}+\epsilon\bigl(\int|div\tilde{A}(x)|^{2}|w_{Q_{m+1}}|^{2}\bigl)^{\frac{1}{2}}+\epsilon^{2}\bigl(\int|\tilde{A}(x)|^{4}|w_{Q_{m+1}}|^{2}\bigl)^{\frac{1}{2}}\Bigl].
\end{array}
\end{equation}

Since we choose $ \gamma> \frac{1}{2}$, by the definition of the configuration space, we have
\begin{equation}\label{3.34}
\Bigl(\sum_{j=1}^{m}e^{-\gamma|Q_{j}-Q_{m+1}|}\Bigl)^{2}\leq C\sum_{j=1}^{m}w(|Q_{m+1}-Q_{j}|).
\end{equation}
It follows from \eqref{3.33} and \eqref{3.34} that
\begin{equation}\label{3.35}
\begin{array}{ll}
&\|\phi_{m+1}\|_{H^{1}(\mathbb{R}^{N})}\\
&\leq \ds C\Bigl[\epsilon\int|\tilde{V}(x)||w_{Q_{m+1}}|+2\epsilon\int|\tilde{A}(x)||\nabla w_{Q_{m+1}}|+\epsilon\int|div\tilde{A}(x)||w_{Q_{m+1}}|\vspace{0.2cm}\\
&\quad\quad\ds +\epsilon^{2}\int|\tilde{A}(x)|^{2}|w_{Q_{m+1}}|+e^{-\beta\mu}\bigl(\sum_{j=1}^{m}w(|Q_{m+1}-Q_{j}|)\bigl)^{\frac{1}{2}}+\epsilon\bigl(\int|\tilde{V}(x)|^{2}|w_{Q_{m+1}}|^{2}\bigl)^{\frac{1}{2}}\vspace{0.2cm}\\
&\quad\quad\ds +\epsilon\bigl(\int|\tilde{A}(x)|^{2}|\nabla
w_{Q_{m+1}}|^{2}\bigl)^{\frac{1}{2}}+\epsilon\bigl(\int|div\tilde{A}(x)|^{2}|w_{Q_{m+1}}|^{2}\bigl)^{\frac{1}{2}}+\epsilon^{2}\bigl(\int|\tilde{A}(x)|^{4}|w_{Q_{m+1}}|^{2}\bigl)^{\frac{1}{2}}\Bigl].
\end{array}
\end{equation}
Hence \eqref{3.5} holds.

Moreover, from the estimates \eqref{3.18} and \eqref{3.23}, and taking into consideration that $\eta_{j}$ is supposed in $B_{\frac{\mu}{2}}(Q_{j})$, using the $H\ddot{o}lder$
inequality, we can get a more accurate estimate on $\phi_{m+1}$,
\begin{equation}\label{3.36}
\begin{array}{ll}
&\|\phi_{m+1}\|_{H^{1}(\mathbb{R}^{N})}\\
&\leq \ds C\Bigl[\epsilon\sum_{j=1}^{m+1}\bigl(\int_{B_{\frac{\mu}{2}}(Q_{j})}|\tilde{V}(x)|^{2}|w_{Q_{m+1}}|^{2}\bigl)^{\frac{1}{2}}
+2\epsilon\sum_{j=1}^{m+1}\bigl(\int_{B_{\frac{\mu}{2}}(Q_{j})}|\tilde{A}(x)|^{2}|\nabla w_{Q_{m+1}}|^{2}\bigl)^{\frac{1}{2}}\vspace{0.2cm}\\
&\quad\quad\ds+\epsilon\sum_{j=1}^{m+1}\bigl(\int_{B_{\frac{\mu}{2}}(Q_{j})}|div\tilde{A}(x)|^{2}|w_{Q_{m+1}}|^{2}\bigl)^{\frac{1}{2}} +\epsilon^{2}\sum_{j=1}^{m+1}\bigl(\int_{B_{\frac{\mu}{2}}(Q_{j})}|\tilde{A}(x)|^{4}|w_{Q_{m+1}}|^{2}\bigl)^{\frac{1}{2}}\vspace{0.2cm}\\
&\quad\quad\ds+e^{-\beta\mu}\bigl(\sum_{j=1}^{m}w(|Q_{m+1}-Q_{j}|)\bigl)^{\frac{1}{2}}+\epsilon\bigl(\int|\tilde{V}(x)|^{2}|w_{Q_{m+1}}|^{2}\bigl)^{\frac{1}{2}}+\epsilon^{2}\bigl(\int|\tilde{A}(x)|^{4}|w_{Q_{m+1}}|^{2}\bigl)^{\frac{1}{2}}\vspace{0.2cm}\\
&\quad\quad\ds +\epsilon\bigl(\int|\tilde{A}(x)|^{2}|\nabla
w_{Q_{m+1}}|^{2}\bigl)^{\frac{1}{2}}+\epsilon\bigl(\int|div\tilde{A}(x)|^{2}|w_{Q_{m+1}}|^{2}\bigl)^{\frac{1}{2}}\Bigl].
\end{array}
\end{equation}

\end{proof}

\section{Proof of the main result}\label{s1}

In this section, first we study a maximization problem. Then we prove our main result.

Fix $(\sigma,\textbf{Q}_{m})\in [0,2\pi]\times\Omega_{m},$ we define a new functional
\begin{equation}\label{4.1}
\mathcal {M}(\sigma,\textbf{Q}_{m}) = J(u_{\textbf{Q}_{m}}) = J(z_{\textbf{Q}_{m}} + \varphi_{\sigma,\textbf{Q}_{m}}) : [0,2\pi]\times\Omega_{m}\rightarrow \mathbb{R}.
\end{equation}
Since both $z_{\textbf{Q}_{m}}$ and $ \varphi_{\sigma,\textbf{Q}_{m}}$
are both $2\pi$-periodic respect to $\sigma,$ we only need to consider the
maximum problem of $\mathcal {M}(\sigma,\textbf{Q}_{m}) $ respect to $\textbf{Q}_{m}$ in $\Omega_{m}.$ So in the sequel, for simplicity
we denote $\mathcal {M}(\sigma,\textbf{Q}_{m})$ as $\mathcal {M}(\textbf{Q}_{m}).$

Define
\begin{equation}\label{4.2}
\mathcal {C}_{m}=\sup_{\textbf{Q}_{m}\in\Omega_{m}}\mathcal {M}(\textbf{Q}_{m})
\end{equation}

Note that $\mathcal {M}(\textbf{Q}_{m})$ is continuous in $\textbf{Q}_{m}$. We will show below that the maximization problem has a solution. Let $\mathcal
{M}(\bar{\textbf{Q}}_{m})$ be the maximum where $\bar{\textbf{Q}}_{m} = (\bar{Q}_{1}, \ldots, \bar{Q}_{m}) \in \bar{\Omega}_{m}$ that is
\begin{equation}\label{4.3}
\mathcal {M}(\bar{\textbf{Q}}_{m})=\max_{\textbf{Q}_{m}\in\Omega_{m}}\mathcal {M}(\textbf{Q}_{m})
\end{equation}
and we denote the solution by $u_{\bar{\textbf{Q}}_{m}}$.

First we prove that the maximum can be attained at finite points for each $\mathcal {C}_{m}$.
\begin{lem}\label{lem4.1}
Let assumptions $(A1)-(A4)$, $(V1)-(V2)$ and the assumptions in Proposition 2.4 be satisfied. Then, for all m:\\
(i)There exists $\textbf{Q}_{m} \in\Omega_{m}$ such that
\begin{equation}\label{4.4}
\mathcal {C}_{m} =\mathcal {M}(\textbf{Q}_{m});
\end{equation}
(ii) There holds
\begin{equation}\label{4.5}
\mathcal {C}_{m+1} > \mathcal {C}_{m} + I(z),
\end{equation}
where I(z) is the energy of the solution z of \eqref{ea0}:
\begin{equation}\label{4.6}
I(z)=\frac{1}{2}\int\Bigl|\Big(\frac{\nabla}{i}-A_{0}\Big)z\Bigl|^{2}+|z|^{2}-\int F(z)
\end{equation}
\end{lem}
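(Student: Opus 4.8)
The plan is to prove the two assertions separately, treating (i) as a compactness statement and (ii) as the induction step that feeds the whole construction. For part (i), I would argue by contradiction: if the supremum $\mathcal{C}_m$ in \eqref{4.2} is not attained in $\Omega_m$, then along a maximizing sequence $\textbf{Q}_m^{(n)}$ either two spikes collide (so $\textbf{Q}_m^{(n)}$ approaches $\partial\Omega_m$, i.e.\ $\min_{k\neq j}|Q_k^{(n)}-Q_j^{(n)}|\to\mu$) or at least one spike escapes to infinity, $|Q_j^{(n)}|\to\infty$. In the second case, the decay hypotheses $(A_1)$--$(A_4)$ and $(V_1)$ force the interaction of that spike with the electromagnetic perturbation to vanish, so the escaping spike contributes only $I(z)$ to the energy and $\mathcal{M}(\textbf{Q}_m^{(n)})\to \mathcal{C}_{m-1}'+I(z)$ for some configuration of the remaining $m-1$ spikes, which is strictly smaller than $\mathcal{C}_m$ once (ii) is known for the lower index --- so here I would set up (i) and (ii) by a simultaneous induction on $m$. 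The collision case is handled by the estimate of Lemma \ref{lem3.2}: the energy expansion $\mathcal{M}(\textbf{Q}_m)=\sum_{j}I(z)+(\text{positive interaction and potential terms})+o(1)$ shows that pushing two spikes to mutual distance exactly $\mu$ is not optimal, because moving them slightly apart (still inside $\Omega_m$) strictly increases the pairwise interaction term $\vartheta\, w(|Q_k-Q_j|)>0$ coming from Lemma \ref{lem3.1}. Hence the maximizer lies in the interior $\Omega_m$, proving \eqref{4.4}.

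For part (ii), the strict inequality \eqref{4.5}, I would start from the $m$-spike maximizer $\bar{\textbf{Q}}_m$ guaranteed by (i) and build an $(m+1)$-spike competitor by adjoining one more spike $z_{Q_{m+1}}$ with $Q_{m+1}$ chosen far from all $\bar Q_1,\dots,\bar Q_m$ and placed where the perturbation helps (or, if no such point helps, simply at a point where $|Q_{m+1}-\bar Q_j|$ is large enough that all interaction terms are negligible but the single-spike energy $I(z)$ is recovered up to an exponentially small error). Using \eqref{3.1}, $u_{\textbf{Q}_{m+1}}=u_{\bar{\textbf{Q}}_m}+z_{Q_{m+1}}+\phi_{m+1}$, and expanding $J$ around $u_{\bar{\textbf{Q}}_m}+z_{Q_{m+1}}$, the cross terms are controlled: the quadratic and nonlinear cross terms by the interaction estimate of Lemma \ref{lem3.1} and the smallness of $\phi_{m+1}$ from Lemma \ref{lem3.2}, and the terms involving $\mathcal S(u_{\bar{\textbf{Q}}_m})$ vanish because $u_{\bar{\textbf{Q}}_m}$ solves \eqref{1.3} (the coefficients $c_{j,k}$ vanish at the maximum point, by the concluding argument of the paper's scheme). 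The key point is a lower bound of the form
$$
\mathcal{M}(\textbf{Q}_{m+1}) \geq \mathcal{M}(\bar{\textbf{Q}}_m) + I(z) - C\Bigl(\sum_{j=1}^{m}w(|Q_{m+1}-\bar Q_j|) + \epsilon\,\omega(Q_{m+1}) + \|\phi_{m+1}\|_{H^1}^2\Bigr),
$$
where $\omega(Q_{m+1})\to 0$ as $|Q_{m+1}|\to\infty$ by the decay hypotheses, and $\|\phi_{m+1}\|_{H^1}$ is bounded by the right-hand side of \eqref{3.5}, which is itself $o(1)$ for $|Q_{m+1}|$ large. Choosing $|Q_{m+1}|$ sufficiently large (depending on $m$, $\mu$, $\epsilon$) makes the error term strictly smaller than any fixed positive gap, so taking the supremum over such $Q_{m+1}$ and over $\textbf{Q}_{m+1}\in\Omega_{m+1}$ yields $\mathcal{C}_{m+1}\geq \mathcal{C}_m+I(z)-o(1)$; since the supremum is not attained at infinity (by the part-(i) argument), the inequality is in fact strict, giving \eqref{4.5}.

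The main obstacle, I expect, is making the energy expansion in part (ii) genuinely quantitative in the presence of the magnetic field: one must show that all the new terms generated by $A_\epsilon$ --- namely those involving $\mathrm{div}\,\tilde A$, $\tilde A\cdot\nabla w_{Q_{m+1}}$, $A_0\cdot\tilde A$, and $|\tilde A|^2$ --- can be absorbed into the error, uniformly in $m$. This is exactly where Lemma \ref{lem3.2} is designed to help, since its right-hand side already isolates precisely these quantities; the delicate part is checking that each of them decays as $|Q_{m+1}|\to\infty$ fast enough (using $(A_1)$--$(A_4)$, $(V_1)$) that the gain $I(z)>0$ dominates, and that the constant $C$ does not degenerate as $m\to\infty$ --- which is ensured by Lemma \ref{number} controlling the number of nearby spikes. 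A secondary technical point is justifying that $c_{j,k}=0$ at the maximizer so that $u_{\bar{\textbf{Q}}_m}$ is an honest solution and the first-variation terms drop out; this follows from differentiating $\mathcal M$ and using the non-degeneracy of the approximate kernels $\{D_{j,k}\}$, exactly as in \cite{aw}, but the magnetic corrections to the $D_{j,k}$ (the extra $iA_{0,k}z_{Q_j}$ pieces visible in \eqref{2.6}) must be carried through the computation.
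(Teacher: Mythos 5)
Your outline for part (i)---dichotomy between collision and escape, simultaneous induction on $m$---matches the paper's Step 1/Step 2 structure, and your identification of the cross terms and of Lemma \ref{lem3.2} as the key technical inputs is on target. But the argument for the strict inequality \eqref{4.5} has a genuine gap. You write that placing the extra spike far away gives $\mathcal{C}_{m+1}\geq \mathcal{C}_m+I(z)-o(1)$, and then claim ``since the supremum is not attained at infinity, the inequality is in fact strict.'' That last step is a non sequitur: a non-strict lower bound up to errors plus attainment of the supremum does not upgrade to strictness. In your displayed lower bound, \emph{all} the perturbation terms (the $\epsilon\,\omega(Q_{m+1})$ piece) are subtracted as errors, so they can only hurt you; with that sign convention you can never beat $\mathcal{C}_m+I(z)$.

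What actually drives the strict inequality in the paper, and what your proposal never invokes, are the \emph{slow-decay} hypotheses $(A_2)$, $(A_3)$, $(V_2)$: because $|\tilde V|$, $|\tilde A|^2$, $|\mathrm{div}\,\tilde A|^2$ decay no faster than $e^{-\alpha|x|}$ with $\alpha<1$, placing the new spike at a far point $Q_{m+1}$ produces a \emph{positive} contribution $\frac{\epsilon}{2}\int\tilde V|w_{Q_{m+1}}|^2+\frac{\epsilon^2}{2}\int|\tilde A|^2|w_{Q_{m+1}}|^2\gtrsim \epsilon\, e^{-\alpha|Q_{m+1}|}$ which, after choosing $|Q_{m+1}|$ large as in \eqref{4.27}, dominates both the interaction terms $\sum_j w(|Q_{m+1}-\bar Q_j|)=O(e^{-|Q_{m+1}|+\max_j|\bar Q_j|})$ and the $\|\phi_{m+1}\|_{H^1}^2$ errors estimated in \eqref{3.36}. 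This is the positive ``gain beyond $I(z)$'' that makes \eqref{4.29} strict; it has the opposite sign to what your expansion assigns it. The paper then closes the argument by a sandwich: assuming the maximizing sequence for $\mathcal{C}_{m+1}$ escapes to infinity yields the opposite bound $\mathcal{C}_{m+1}\leq \mathcal{C}_m+I(z)$ via \eqref{4.22}--\eqref{4.24}, and the contradiction with \eqref{4.29} simultaneously proves (i) and (ii). Your proposal is missing this two-sided contradiction structure. A smaller issue: you invoke $c_{j,k}=0$ at $\bar{\textbf{Q}}_m$ to kill the $\mathcal S(u_{\bar{\textbf{Q}}_m})$ cross terms, but at this stage the maximizer could a priori lie on $\partial\Omega_m$ (that is only ruled out later, in the proof of Theorem \ref{thm1.1}); the paper instead carries the $c_{j,k}D_{j,k}$ terms along and bounds them by $Ce^{-\beta\mu}$ via Proposition \ref{prop2.3}, which avoids the circularity.
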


\begin{proof}
We divide the proof into the following two steps.

$\textbf{Step 1}$: $\mathcal {C}_{1} > I(z)$, and $\mathcal {C}_{1}$ can be attained at a finite point. First applying standard Liapnunov-Schmidt reduction,
we have
\begin{equation}\label{4.7}
\|\varphi_{\sigma,Q}\|_{H^{1}}\leq C\|\epsilon \tilde{V}z_{Q}\|_{L^{2}}+C\bigl\|\epsilon^{2} |\tilde{A}|^{2}z_{Q}\bigl\|_{L^{2}}.
\end{equation}
Assuming that $|Q| \rightarrow\infty$, then we have

\begin{equation}\label{4.8}
\begin{array}{ll}
J(u_{Q})&=\ds\frac{1}{2}\int\Bigl|\Bigl(\frac{\nabla}{i}-A_{\epsilon}(x)\Bigl)u_{Q}\Bigl|^{2}+V_{\epsilon}(x)|u_{Q}|^{2}-\int F(u_{Q}) \vspace{0.2cm}\\
&=\ds\frac{1}{2}\int\Bigl|\Bigl(\frac{\nabla}{i}-A_{0}\Bigl) z_{Q}\Bigl|^{2}+\frac{1}{2}\int\Bigl|\Bigl(\frac{\nabla}{i}-A_{0}\Bigl) \varphi_{\sigma,Q}\Bigl|^{2}
+Re\int\Bigl(\frac{\nabla}{i}-A_{0}\Bigl) z_{Q}\overline{\Bigl(\frac{\nabla}{i}-A_{0}\Bigl)\varphi_{\sigma,Q}}\vspace{0.2cm}\\
&\,\,\,\,\,\,\ds+\frac{1}{2}\int\epsilon^{2}|\tilde{A}(x)|^{2}|z_{Q}|^{2}+\frac{1}{2}\int\epsilon^{2}|\tilde{A}(x)|^{2}|\varphi_{\sigma,Q}|^{2}+Re\int\epsilon^{2}|\tilde{A}(x)|^{2}z_{Q}\bar{\varphi}_{\sigma,Q}\vspace{0.2cm}\\
&\,\,\,\,\,\,\ds-Re\int \varepsilon \tilde{A}(x)\Bigl(\frac{\nabla w_{Q}}{i}\xi_{Q}\bar{\varphi}_{\sigma,Q}+\frac{\nabla \varphi_{\sigma,Q}}{i}\bar{z}_{Q}+\frac{\nabla \varphi_{\sigma,Q}}{i}\bar{\varphi}_{\sigma,Q} -A_{0}\varphi_{\sigma,Q}\bar{z}_{Q}-A_{0}\varphi_{\sigma,Q}\bar{\varphi}_{\sigma,Q}\Bigl)\vspace{0.2cm}\\
&\,\,\,\,\,\,\ds+\frac{1}{2}\int|z_{Q}|^{2}+\frac{1}{2}\int|\varphi_{\sigma,Q}|^{2}+Re\int z_{Q}\bar{\varphi}_{\sigma,Q}\vspace{0.2cm}\\
&\,\,\,\,\,\,\ds+\frac{1}{2}\int\epsilon\tilde{V}(x)\bigl(|z_{Q}|^{2}+|\varphi_{\sigma,Q}|^{2}+2Rez_{Q}\bar{\varphi}_{\sigma,Q}\bigl)-\int F(u_{Q})\vspace{0.2cm}\\
&=\ds\frac{1}{2}\int\Bigl|\Bigl(\frac{\nabla}{i}-A_{0}\Bigl) z_{Q}\Bigl|^{2}+\frac{1}{2}\int|z_{Q}|^{2}-\int F(z_{Q})+\frac{1}{2}\int\Bigl|\Bigl(\frac{\nabla}{i}-A_{0}\Bigl) \varphi_{\sigma,Q}\Bigl|^{2}+\frac{1}{2}\int|\varphi_{\sigma,Q}|^{2}\vspace{0.2cm}\\
&\,\,\,\,\,\,\ds+\int F(z_{Q})-\int F(u_{Q})+Re\int\Bigl(\frac{\nabla}{i}-A_{0}\Bigl) z_{Q}\overline{\Bigl(\frac{\nabla}{i}-A_{0}\Bigl)\varphi_{\sigma,Q}}+Re\int z_{Q}\bar{\varphi}_{\sigma,Q}\vspace{0.2cm}\\
&\,\,\,\,\,\,\ds+\frac{1}{2}\int\epsilon\tilde{V}(x)|z_{Q}|^{2}+\frac{1}{2}\int\epsilon^{2}|\tilde{A}(x)|^{2}|z_{Q}|^{2}+\frac{1}{2}\int\epsilon^{2}|\tilde{A}(x)|^{2}|\varphi_{\sigma,Q}|^{2}
\vspace{0.2cm}\\
&\,\,\,\,\,\,\ds+Re\int\epsilon^{2}|\tilde{A}(x)|^{2}z_{Q}\bar{\varphi}_{\sigma,Q}+\frac{1}{2}\int\epsilon\tilde{V}(x)|\varphi_{\sigma,Q}|^{2}+Re\int\epsilon\tilde{V}(x)z_{Q}\bar{\varphi}_{\sigma,Q}\vspace{0.2cm}\\
&\,\,\,\,\,\,\ds-Re\int \varepsilon \tilde{A}(x)\Bigl(\frac{\nabla w_{Q}}{i}\xi_{Q}\bar{\varphi}_{\sigma,Q}+\frac{\nabla \varphi_{\sigma,Q}}{i}\bar{z}_{Q}+\frac{\nabla \varphi_{\sigma,Q}}{i}\bar{\varphi}_{\sigma,Q} -A_{0}\varphi_{\sigma,Q}\bar{z}_{Q}-A_{0}\varphi_{\sigma,Q}\bar{\varphi}_{\sigma,Q}\Bigl)\vspace{0.2cm}\\
&\geq\ds I(z)+\frac{\epsilon}{4}\int \tilde{V}(x)|w_{Q}|^{2}+\frac{\epsilon^{2}}{4}\int |\tilde{A}(x)|^{2}|w_{Q}|^{2}-C\|\varphi_{\sigma,Q}\|_{H^{1}}^{2}-\delta\epsilon\int|div\tilde{A}(x)|^{2}|w_{Q}|^{2}\vspace{0.2cm}\\
&\geq\ds I(z)+\frac{\epsilon}{4}\int \tilde{V}(x)|w_{Q}|^{2}+\frac{\epsilon^{2}}{4}\int |\tilde{A}(x)|^{2}|w_{Q}|^{2}\vspace{0.2cm}\\
&\,\,\,\,\,\,\ds-\delta\epsilon\int|div\tilde{A}(x)|^{2}|w_{Q}|^{2}-\int\epsilon^{2} \tilde{V}^{2}(x)|w_{Q}|^{2}-\int\epsilon^{4} |\tilde{A}(x)|^{2}|w_{Q}|^{2}\vspace{0.2cm}\\
&\geq\ds I(z)+\frac{1}{8}\Bigl(\int_{B_{\frac{\rho}{2}}(Q)} \epsilon \tilde{V}(x)|w_{Q}|^{2}-\sup_{B_{\frac{|Q|}{4}}(0)}|w_{Q}|^{\frac{3}{2}}\int_{supp \tilde{V}^{-}} \epsilon|\tilde{V}(x)|w_{Q}^{\frac{1}{2}}\Bigl)\vspace{0.2cm}\\
&\,\,\,\,\,\,\ds+\frac{1}{8}\Bigl(\int_{B_{\frac{\rho}{2}}(Q)} \epsilon^{2} |\tilde{A}|^{2}|w_{Q}|^{2}-\sup_{B_{\frac{|Q|}{4}}(0)}|w_{Q}|^{\frac{3}{2}}\int_{supp \tilde{A}^{-}} \epsilon^{2}|\tilde{A}(x)|^{2}w_{Q}^{\frac{1}{2}}\vspace{0.2cm}\\
&\,\,\,\,\,\,\ds-\sup_{B_{\frac{|Q|}{4}}(0)}|w_{Q}|^{\frac{3}{2}}\int_{supp \tilde{A}^{-}} \epsilon^{2}|div\tilde{A}(x)|^{2}w_{Q}^{\frac{1}{2}}\Bigl)\vspace{0.2cm}\\
\end{array}
\end{equation}
\begin{equation*}
\begin{array}{ll}
&\geq\ds I(z)+\frac{1}{8}\int_{B_{\frac{\rho}{2}}(Q)} \epsilon \tilde{V}(x)|w_{Q}|^{2}+\frac{1}{8}\int_{B_{\frac{\rho}{2}}(Q)} \epsilon^{2} |\tilde{A}|^{2}|w_{Q}|^{2}-O(e^{-\frac{9}{8}|Q|}),
\end{array}
\end{equation*}
where we use the fact that
\begin{equation}\label{4.9}
\begin{array}{ll}
&\ds \frac{1}{2}\int\epsilon\tilde{V}(x)|z_{Q}|^{2}+\frac{1}{2}\int\epsilon^{2}|\tilde{A}(x)|^{2}|z_{Q}|^{2}
+Re\int\epsilon^{2}|\tilde{A}(x)|^{2}z_{Q}\bar{\varphi}_{\sigma,Q}+Re\int\epsilon\tilde{V}(x)z_{Q}\bar{\varphi}_{\sigma,Q}\vspace{0.2cm}\\
&\ds -Re\int \varepsilon \tilde{A}(x)\Bigl(\frac{\nabla w_{Q}}{i}\xi_{Q}\bar{\varphi}_{\sigma,Q}+\frac{\nabla \varphi_{\sigma,Q}}{i}\bar{z}_{Q}+\frac{\nabla \varphi_{\sigma,Q}}{i}\bar{\varphi}_{\sigma,Q} -A_{0}\varphi_{\sigma,Q}\bar{z}_{Q}-A_{0}\varphi_{\sigma,Q}\bar{\varphi}_{\sigma,Q}\Bigl)\vspace{0.2cm}\\
&\ds\leq \frac{\epsilon}{4}\int \tilde{V}(x)|w_{Q}|^{2}+\frac{\epsilon^{2}}{4}\int |\tilde{A}(x)|^{2}|w_{Q}|^{2}-C\|\varphi_{\sigma,Q}\|_{H^{1}}^{2}-\delta\epsilon\int|div\tilde{A}(x)|^{2}|w_{Q}|^{2}.
\end{array}
\end{equation}
By the slow decay assumption on the potential $\tilde{V}(x)$~and~$\tilde{A}(x)$, we have
$$\frac{1}{8}\int_{B_{\frac{\rho}{2}}(Q)} \epsilon \tilde{V}(x)|w_{Q}|^{2}+\frac{1}{8}\int_{B_{\frac{\rho}{2}}(Q)} \epsilon^{2} |\tilde{A}|^{2}|w_{Q}|^{2}-O(e^{-\frac{9}{8}|Q|})>0,~for~|Q|~large.$$
So
\begin{equation}\label{4.10}
\mathcal {C}_{1}\geq J(u_{Q})>I(z).
\end{equation}

Now we will prove that $\mathcal {C}_{1}$ can be attained at a finite point. Let ${Q_{j}}$ be a sequence such that $\lim\limits_{j\rightarrow\infty}\mathcal
{M}(Q_{j}) = \mathcal {C}_{1}$, and assume that $|Q_{j} |\rightarrow+\infty$,

\begin{equation}\label{4.11}
\begin{array}{ll}
J(u_{Q_{j}})&=\ds\frac{1}{2}\int\Bigl|(\frac{\nabla}{i}-A_{\epsilon}(x))u_{Q_{j}}\Bigl|^{2}+V_{\epsilon}(x)|u_{Q_{j}}|^{2}-\int F(u_{Q_{j}}) \vspace{0.2cm}\\
&=\ds I(z)+\frac{1}{2}\int\Bigl|\Bigl(\frac{\nabla}{i}-A_{0}\Bigl) \varphi_{\sigma,Q_{j}}\Bigl|^{2}+\frac{1}{2}\int|\varphi_{\sigma,Q_{j}}|^{2}\vspace{0.2cm}\\
&\,\,\,\,\,\,\ds+Re\int\Bigl(\frac{\nabla}{i}-A_{0}\Bigl) z_{Q_{j}}\overline{\Bigl(\frac{\nabla}{i}-A_{0}\Bigl)\varphi_{\sigma,Q_{j}}}+Re\int z_{Q_{j}}\bar{\varphi}_{\sigma,Q_{j}}-Re\int f(z_{Q_{j}})\bar{\varphi}_{\sigma,Q_{j}}\vspace{0.2cm}\\
&\,\,\,\,\,\,\ds-\int(F(u_{Q_{j}})-F(z_{Q_{j}}))+Re\int f(z_{Q_{j}})\bar{\varphi}_{\sigma,Q_{j}}+\frac{\epsilon}{2}\int \tilde{V}(x)|u_{Q_{j}}|^{2}+\frac{\epsilon}{2}\int |\tilde{A}(x)|^{2}|u_{Q_{j}}|^{2}\vspace{0.2cm}\\
&\,\,\,\,\,\,\ds-Re\int \varepsilon \tilde{A}(x)\Bigl(\frac{\nabla}{i}-A_{0}\Bigl)(z_{Q_{j}}+\varphi_{\sigma,Q_{j}})\overline{(z_{Q_{j}}+\varphi_{\sigma,Q_{j}})}\vspace{0.2cm}\\
&\leq\ds I(z)+C\|\varphi_{\sigma,Q_{j}}\|^{2}+\frac{\epsilon}{2}\int \tilde{V}(x)|u_{Q_{j}}|^{2}+\frac{\epsilon}{2}\int |\tilde{A}(x)|^{2}|u_{Q_{j}}|^{2}\vspace{0.2cm}\\
&\leq\ds I(z)+O\Big(\int\epsilon^{2}\tilde{V}^{2}|z_{Q_{j}}|^{2}\Big)+\frac{\epsilon}{2}\int \tilde{V}(x)|u_{Q_{j}}|^{2}+\frac{\epsilon}{2}\int |\tilde{A}(x)|^{2}|u_{Q_{j}}|^{2}.
\end{array}
\end{equation}

Since $|\tilde{A}(x)| \rightarrow 0$ and $\tilde{V}(x) \rightarrow 0$ as $|x|\rightarrow \infty$, we have
$$O\Big(\int\epsilon^{2}\tilde{V}^{2}|z_{Q_{j}}|^{2}\Big)+\frac{\epsilon}{2}\int \tilde{V}(x)|u_{Q_{j}}|^{2}+\frac{\epsilon}{2}\int |\tilde{A}(x)|^{2}|u_{Q_{j}}|^{2}\rightarrow0,$$

so we have
$$\mathcal {C}_{1}=\lim\limits_{j\rightarrow\infty}J(u_{Q_{j}})\leq I(z),$$
which contradicts to \eqref{4.10}. Thus $\mathcal {C}_{1}$ can be attained at a finite point.

$\textbf{Step 2}$: Assume that there exists $\bar{\textbf{Q}}_{m} = (\bar{Q}_{1},\ldots, \bar{Q}_{m}) \in\Omega_{m}$ such that $\mathcal {C}_{m} =\mathcal
{M}(\textbf{Q}_{m})$ and we denote the solution by $u_{\bar{\textbf{Q}}_{m}}$. Next we prove that there exists $\textbf{Q}_{m+1} \in\Omega_{m+1}$ such that
$\mathcal {C}_{m+1}$ can be attained. Let $\textbf{Q}_{m+1}^{(n)}$ be a sequence such that
\begin{equation}\label{4.12}
\mathcal {C}_{m+1}=\lim\limits_{n\rightarrow\infty}\mathcal {M}(\textbf{Q}_{m+1}^{(n)}).
\end{equation}

We claim that $\textbf{Q}_{m+1}^{(n)}$ is bounded. We prove it by contradiction. In the following we omit index $n$ for simplicity. By direct computation, we
have

\begin{equation}\label{4.13}
\begin{array}{ll}
&J(u_{\textbf{Q}_{m+1}})=J(u_{\textbf{Q}_{m}}+z_{Q_{m+1}}+\phi_{m+1}) \vspace{0.2cm}\\
&=\ds\frac{1}{2}\int\Bigl|\Big(\frac{\nabla}{i}-A_{\epsilon}(x)\Big)(u_{\textbf{Q}_{m}}+z_{Q_{m+1}}+\phi_{m+1})\Bigl|^{2}+V_{\epsilon}(x)|u_{\textbf{Q}_{m}}+z_{Q_{m+1}}+\phi_{m+1}|^{2}\vspace{0.2cm}\\
&\,\,\,\,\,\,\ds-\int F(u_{\textbf{Q}_{m}}+z_{Q_{m+1}}+\phi_{m+1}) \vspace{0.2cm}\\
&=\ds J(u_{\textbf{Q}_{m}}+z_{Q_{m+1}})+Re\int\Bigl(\frac{\nabla}{i}-A_{\epsilon}(x)\Bigl)u_{\textbf{Q}_{m}}\overline{\Bigl(\frac{\nabla}{i}-A_{\epsilon}(x)\Bigl)\phi_{m+1}}\vspace{0.2cm}\\
&\,\,\,\,\,\,\ds +Re\int V_{\epsilon}(x)u_{\textbf{Q}_{m}}\bar{\phi}_{m+1}-Re\int
f(u_{\textbf{Q}_{m}})\bar{\phi}_{m+1}\ds+Re\int f(u_{\textbf{Q}_{m}})\bar{\phi}_{m+1}-Re\int f(u_{\textbf{Q}_{m}}+z_{Q_{m+1}})\bar{\phi}_{m+1}\vspace{0.2cm}\\
&\,\,\,\,\,\,\ds-\int F(u_{\textbf{Q}_{m}}+z_{Q_{m+1}}+\phi_{m+1})+\int F(u_{\textbf{Q}_{m}}+z_{Q_{m+1}})+Re\int
f(u_{\textbf{Q}_{m}}+z_{Q_{m+1}})\bar{\phi}_{m+1}\vspace{0.2cm}\\
&\,\,\,\,\,\,\ds+Re\int\Bigl(\frac{\nabla}{i}-A_{\epsilon}(x)\Bigl)z_{Q_{m+1}}\overline{\Bigl(\frac{\nabla}{i}-A_{\epsilon}(x)\Bigl)\phi_{m+1}}+\frac{1}{2}\int V_{\epsilon}(x)|\phi_{m+1}|^{2}\vspace{0.2cm}\\
&\,\,\,\,\,\,\ds +\frac{1}{2}\int\Bigl|(\frac{\nabla}{i}-A_{\epsilon}(x))\phi_{m+1}\Bigl|^{2}+Re\int V_{\epsilon}(x)z_{Q_{m+1}}\bar{\phi}_{m+1}\vspace{0.2cm}\\
&=\ds J(u_{\textbf{Q}_{m}}+z_{Q_{m+1}})-\int\sum_{j=1}^{m}\sum_{k=1}^{N+1}c_{jk}D_{jk}\phi_{m+1}\vspace{0.2cm}\\
&\,\,\,\,\,\,\ds+Re\int f(u_{\textbf{Q}_{m}})\bar{\phi}_{m+1}-Re\int f(u_{\textbf{Q}_{m}}+z_{Q_{m+1}})\bar{\phi}_{m+1}+Re\int f(z_{Q_{m+1}})\bar{\phi}_{m+1}\vspace{0.2cm}\\
&\,\,\,\,\,\,\ds-\int f'(u_{\textbf{Q}_{m}}+z_{Q_{m+1}}+\vartheta\phi_{m+1})|\phi_{m+1}|^{2}+\frac{1}{2}\int\Bigl|\Big(\frac{\nabla}{i}-A_{\epsilon}(x)\Big)\phi_{m+1}\Bigl|^{2}\vspace{0.2cm}\\
&\,\,\,\,\,\,\ds+Re\int\Bigl(\frac{\nabla}{i}-A_{\epsilon}(x)\Bigl)z_{Q_{m+1}}\overline{\Bigl(\frac{\nabla}{i}-A_{\epsilon}(x)\Bigl)\phi_{m+1}}\vspace{0.2cm}\\
&\,\,\,\,\,\,\ds +\frac{1}{2}\int V_{\epsilon}(x)|\phi_{m+1}|^{2}+Re\int V_{\epsilon}(x)z_{Q_{m+1}}\bar{\phi}_{m+1}-Re\int f(z_{Q_{m+1}})\bar{\phi}_{m+1}\vspace{0.2cm}\\
&=\ds J(u_{\textbf{Q}_{m}}+z_{Q_{m+1}})+O(\|\phi_{m+1}\|^{2}+\|\mathcal
{\bar{S}}(u_{\textbf{Q}_{m}}+z_{Q_{m+1}})\|\|\phi_{m+1}\|)-\ds\int\sum_{j=1}^{m}\sum_{k=1}^{N+1}c_{j,k}D_{j,k}\phi_{m+1}\vspace{0.2cm}\\
&=\ds J(u_{\textbf{Q}_{m}}+z_{Q_{m+1}})+O\Bigl[e^{-\beta\mu}\sum_{j=1}^{m}w(|Q_{m+1}-Q_{j}|)+\epsilon^{2}\bigl(\int|\tilde{V}(x)||w_{Q_{m+1}}|\bigl)^{2}\vspace{0.2cm}\\
&\,\,\,\,\,\,\ds+\epsilon^{4}\bigl(\int|\tilde{A}(x)|^{2}|w_{Q_{m+1}}|\bigl)^{2}+\epsilon^{2}\bigl( \int|\tilde{A}(x)||\nabla
w_{Q_{m+1}}|\bigl)^{2}+\epsilon^{2}\bigl(\int|div\tilde{A}(x)||w_{Q_{m+1}}| \bigl)^{2}\vspace{0.2cm}\\
&\,\,\,\,\,\,\ds+\epsilon^{2}\int|\tilde{V}(x)|^{2}|w_{Q_{m+1}}|^{2}+\epsilon^{2}\int|\tilde{A}(x)|^{2}|\nabla
w_{Q_{m+1}}|^{2}+\epsilon^{2}\int|div\tilde{A}(x)|^{2}|w_{Q_{m+1}}|^{2}\vspace{0.2cm}\\
&\,\,\,\,\,\,\ds+\epsilon^{4}\int|\tilde{A}(x)|^{4}|w_{Q_{m+1}}|^{2}\Bigl].
\end{array}
\end{equation}

Moreover, we have
\begin{equation}\label{4.14}
\begin{array}{ll}
&J(u_{\textbf{Q}_{m}}+z_{Q_{m+1}})\vspace{0.2cm}\\
&=\ds\frac{1}{2}\int\Bigl|\Big(\frac{\nabla}{i}-A_{\epsilon}(x)\Big)(u_{\textbf{Q}_{m}}+z_{Q_{m+1}})\Bigl|^{2}
+V_{\epsilon}(x)|u_{\textbf{Q}_{m}}+z_{Q_{m+1}}|^{2}
-\int F(u_{\textbf{Q}_{m}}+z_{Q_{m+1}}) \vspace{0.2cm}\\
&\leq\ds \mathcal {C}_{m}+\frac{1}{2}\int|z_{Q_{m+1}}|^{2}+\frac{1}{2}\int\Bigl|(\frac{\nabla}{i}-A_{0})z_{Q_{m+1}}\Bigl|^{2}-\int F(z_{Q_{m+1}})\vspace{0.2cm}\\
&\,\,\,\,\,\,\ds +Re\int(1+\epsilon \tilde{V}(x))u_{\textbf{Q}_{m}}\bar{z}_{Q_{m+1}}
+Re\int\Bigl(\frac{\nabla}{i}-A_{\epsilon}(x)\Bigl)u_{\textbf{Q}_{m}}\overline{\Bigl(\frac{\nabla}{i}-A_{\epsilon}(x)\Bigl)z_{Q_{m+1}}}\vspace{0.2cm}\\
&\,\,\,\,\,\,\ds-\int F(u_{\textbf{Q}_{m}}+z_{Q_{m+1}})+\int F(u_{\textbf{Q}_{m}})+\int F(z_{Q_{m+1}})+\frac{1}{2}\int\epsilon \tilde{V}(x)|z_{Q_{m+1}}|^{2}\vspace{0.2cm}\\
&\,\,\,\,\,\,\ds+\frac{1}{2}\int\epsilon^{2} |\tilde{A}(x)|^{2}|z_{Q_{m+1}}|^{2}-Re\int\epsilon \tilde{A}(x)\Bigl(\frac{\nabla}{i}-A_{0}\Bigl)z_{Q_{m+1}} \bar{z}_{Q_{m+1}}\vspace{0.2cm}\\
&\leq\ds \mathcal {C}_{m}+I(z)+\frac{1}{2}\int\epsilon \tilde{V}(x)|z_{Q_{m+1}}|^{2}+\frac{1}{2}\int\epsilon^{2} |\tilde{A}(x)|^{2}|z_{Q_{m+1}}|^{2}\vspace{0.2cm}\\
&\,\,\,\,\,\,\ds+Re\int \bigl(f(u_{\textbf{Q}_{m}})-\sum_{j=1}^{m}\sum_{k=1}^{N+1}c_{jk}D_{j,k}\bigl)\bar{z}_{Q_{m+1}}-Re\int f(u_{\textbf{Q}_{m}})\bar{z}_{Q_{m+1}}\vspace{0.2cm}\\
&\,\,\,\,\,\,\ds-Re\int f(z_{Q_{m+1}})\bar{u}_{\textbf{Q}_{m}}+O\Big(e^{-\beta\mu}\sum_{j=1}^{m}w(|Q_{m+1}-Q_{j}|)\Big)\vspace{0.2cm}\\
&\leq\ds \mathcal {C}_{m}+I(z)+\frac{1}{2}\int\epsilon \tilde{V}(x)|z_{Q_{m+1}}|^{2}+\frac{1}{2}\int\epsilon^{2} |\tilde{A}(x)|^{2}|z_{Q_{m+1}}|^{2}
-Re\int f(z_{Q_{m+1}})\bar{u}_{\textbf{Q}_{m}}\vspace{0.2cm}\\
&\quad-Re\ds\int \sum_{j=1}^{m}\sum_{k=1}^{N+1}c_{j,k}D_{j,k}\bar{z}_{Q_{m+1}}
+O\Big(e^{-\beta\mu}\ds\sum_{j=1}^{m}w(|Q_{m+1}-Q_{j}|)\Big).
\end{array}
\end{equation}
By estimate \eqref{2.28} in Proposition \ref{prop2.3}, and that the definition of $D_{j,k}$, we have
\begin{equation}\label{4.15}
\Bigl|Re\int\sum_{j=1}^{m}\sum_{k=1}^{N+1}c_{j,k}D_{j,k}\bar{z}_{Q_{m+1}}\Bigl|\leq Ce^{-\beta\mu}\sum_{j=1}^{m}w(|Q_{m+1}-Q_{j}|).
\end{equation}
By the equation satisfied by $\varphi_{\sigma,\textbf{Q}_{m}}$
\begin{equation}\label{4.16}
L(\varphi_{\sigma,\textbf{Q}_{m}})=-\mathcal {S}(z_{\textbf{Q}_{m}})+\mathcal {N}(\varphi_{\sigma,\textbf{Q}_{m}})+\sum_{j=1}^{m}\sum_{k=1}^{N+1}c_{j,k}D_{j,k},
\end{equation}
we have
\begin{equation}\label{4.17}
\begin{array}{ll}
&\ds Re\int f(z_{Q_{m+1}})\bar{\varphi}_{\sigma,\textbf{Q}_{m}}\vspace{0.2cm}\\
&=\ds Re\int \Bigl(\frac{\nabla}{i}-A_{0}\Bigl)z_{Q_{m+1}}\overline{\Bigl(\frac{\nabla}{i}-A_{0}\Bigl)\varphi_{\sigma,\textbf{Q}_{m}}}+Re\int z_{\textbf{Q}_{m}}\bar{\varphi}_{Q_{m+1}}\vspace{0.2cm}\\
&=\ds Re\int \Bigl(\frac{\nabla}{i}-A_{0}\Bigl)\varphi_{\sigma,\textbf{Q}_{m}}\overline{\Bigl(\frac{\nabla}{i}-A_{0}\Bigl)z_{Q_{m+1}}}+Re\int \varphi_{\sigma,\textbf{Q}_{m}}\bar{z}_{Q_{m+1}}\vspace{0.2cm}\\
&=\ds Re\int\Bigl(\mathcal {S}(z_{\textbf{Q}_{m}})-\mathcal {N}(\varphi_{\sigma,\textbf{Q}_{m}})-\sum_{j=1}^{m}\sum_{k=1}^{N+1}c_{j,k}D_{j,k}-\epsilon \tilde{V}\varphi_{\sigma,\textbf{Q}_{m}} \vspace{0.2cm}\\
\end{array}
\end{equation}
\begin{equation*}
\begin{array}{ll}
&\,\,\,\,\,\,\ds+f'(z_{\textbf{Q}_{m}})\varphi_{\sigma,\textbf{Q}_{m}}\Bigl)\bar{z}_{Q_{m+1}}+Re\int \Bigl(\frac{\nabla}{i}-A_{0}\Bigl)\varphi_{\sigma,\textbf{Q}_{m}}\overline{\Bigl(\frac{\nabla}{i}-A_{0}\Bigl)z_{Q_{m+1}}}\vspace{0.2cm}\\
&\,\,\,\,\,\,\ds-Re\int\bigl(\frac{\nabla}{i}-A_{\epsilon}(x)\bigl)\varphi_{\sigma,\textbf{Q}_{m}}\overline{\bigl(\frac{\nabla}{i}-A_{\epsilon}(x)\bigl)z_{Q_{m+1}}}\vspace{0.2cm}\\
&=\ds Re\int\Bigl(\mathcal {S}(z_{\textbf{Q}_{m}})-\mathcal {N}(\varphi_{\sigma,\textbf{Q}_{m}})-\sum_{j=1}^{m}\sum_{k=1}^{N+1}c_{j,k}D_{j,k}-\epsilon \tilde{V}\varphi_{\sigma,\textbf{Q}_{m}} \vspace{0.2cm}\\
&\,\,\,\,\,\,\ds+f'(z_{\textbf{Q}_{m}})\varphi_{\sigma,\textbf{Q}_{m}}\Bigl)\bar{z}_{Q_{m+1}}+Re\int\epsilon
\tilde{A}\Bigl(\frac{\nabla}{i}-A_{0}\Bigl)\varphi_{\sigma,\textbf{Q}_{m}}\bar{z}_{Q_{m+1}}\vspace{0.2cm}\\
&\,\,\,\,\,\,\ds+Re\int\epsilon \tilde{A}\varphi_{\sigma,\textbf{Q}_{m}}\overline{\Bigl(\frac{\nabla}{i}-A_{0}\Bigl)z_{Q_{m+1}}}-Re\int\epsilon^{2}
|\tilde{A}|^{2}\varphi_{\sigma,\textbf{Q}_{m}}\bar{z}_{Q_{m+1}}.
\end{array}
\end{equation*}

Moreover, we can choose $\gamma$ that $\gamma+\delta>1$, $(1+\delta)\gamma>1$. Then we can easily get
\begin{equation}\label{4.18}
\Bigl|Re\int(\mathcal {N}(\varphi_{\sigma,\textbf{Q}_{m}})-f'(z_{\textbf{Q}_{m}})\varphi_{\sigma,\textbf{Q}_{m}})\bar{z}_{Q_{m+1}}\Bigl|\leq
Ce^{-\beta\mu}\sum_{j=1}^{m}w(|Q_{m+1}-Q_{j}|)
\end{equation}
and
\begin{equation}\label{4.19}
\begin{array}{ll}
&\ds \Bigl|Re\int(\mathcal {S}(z_{\textbf{Q}_{m}})-\epsilon \tilde{V}\varphi_{\sigma,\textbf{Q}_{m}})\bar{z}_{Q_{m+1}}+Re\int\epsilon
\tilde{A}\Bigl(\frac{\nabla}{i}-A_{0}\Bigl)\varphi_{\sigma,\textbf{Q}_{m}}\bar{z}_{Q_{m+1}}\vspace{0.2cm}\\
&\,\,\,\,\,\,\ds +Re\int\epsilon \tilde{A}\varphi_{\sigma,\textbf{Q}_{m}}\overline{\Bigl(\frac{\nabla}{i}-A_{0}\Bigl)z_{Q_{m+1}}}-Re\int\epsilon^{2}
|\tilde{A}|^{2}\varphi_{\sigma,\textbf{Q}_{m}}\bar{z}_{Q_{m+1}}\Bigl|\vspace{0.2cm}\\
&=\ds \Bigl|Re\int\Bigl(f(z_{\textbf{Q}_{m}})-\sum_{j=1}^{m}f(z_{Q_{j}})-\epsilon \tilde{V}\varphi_{\sigma,\textbf{Q}_{m}}-\epsilon \tilde{V}z_{\textbf{Q}_{m}}\vspace{0.2cm}\\
&\,\,\,\,\,\,\ds+\frac{\epsilon}{i}div\tilde{A}(x)z_{\textbf{Q}_{m}}+\frac{2\epsilon}{i}\sum_{j=1}^{m}\xi_{j}\tilde{A}(x)\cdot\nabla w_{Q_{j}}-\epsilon^{2}|\tilde{A}(x)|^{2}z_{\textbf{Q}_{m}}\Bigl)\bar{z}_{Q_{m+1}}\vspace{0.2cm}\\
&\,\,\,\,\,\,\ds+Re\int\frac{\epsilon}{i}\tilde{A}(x)\cdot\nabla\varphi_{\sigma,\textbf{Q}_{m}}\bar{z}_{Q_{m+1}}-Re\int\epsilon A_{0}\cdot \tilde{A}(x)
\varphi_{\sigma,\textbf{Q}_{m}}\bar{z}_{Q_{m+1}}\vspace{0.2cm}\\
&\,\,\,\,\,\,\ds-Re\int\frac{\epsilon}{i}\varphi_{\sigma,\textbf{Q}_{m}}\overline{\xi}_{\textbf{Q}_{m}}\tilde{A}(x)\cdot\nabla w_{Q_{m+1}}-Re\int\epsilon^{2}|\tilde{A}(x)|^{2}\varphi_{\sigma,\textbf{Q}_{m}}\bar{z}_{Q_{m+1}}\Bigl|\vspace{0.2cm}\\
&\leq\ds C\Bigl(\epsilon \int \tilde{V}w_{\textbf{Q}_{m}}w_{Q_{m+1}}+ \epsilon e^{-\beta\mu} \int
\sum_{j=1}^{m}e^{-\gamma|x-Q_{j}|}\tilde{V}w_{Q_{m+1}}+e^{-\beta\mu}\sum_{j=1}^{m}w(|Q_{m+1}-Q_{j}|)\vspace{0.2cm}\\
&\,\,\,\,\,\,\ds+ \epsilon \int |div\tilde{A}(x)|w_{\textbf{Q}_{m}}w_{Q_{m+1}} + \epsilon \int |\tilde{A}(x)||\nabla w_{\textbf{Q}_{m}}|w_{Q_{m+1}}+ \epsilon^{2} \int
|\tilde{A}(x)|^{2}w_{\textbf{Q}_{m}}w_{Q_{m+1}}\vspace{0.2cm}\\
&\,\,\,\,\,\,\ds+ \epsilon e^{-\beta\mu}\int |\tilde{A}(x)|\sum_{j=1}^{m}e^{-\gamma|x-Q_{j}|}w_{Q_{m+1}} + \epsilon e^{-\beta\mu}\int
|\tilde{A}(x)|\sum_{j=1}^{m}e^{-\gamma|x-Q_{j}|}w_{Q_{m+1}}\vspace{0.2cm}\\
&\,\,\,\,\,\,\ds+ \epsilon e^{-\beta\mu}\int |\tilde{A}(x)|\sum_{j=1}^{m}e^{-\gamma|x-Q_{j}|}|\nabla w_{Q_{m+1}}|+ \epsilon^{2} e^{-\beta\mu}\int
|\tilde{A}(x)|^{2}\sum_{j=1}^{m}e^{-\gamma|x-Q_{j}|}w_{Q_{m+1}}\Bigl).
\end{array}
\end{equation}

From \eqref{4.16} to \eqref{4.19}, we obtain
\begin{equation}\label{4.20}
\begin{array}{ll}
&\ds \Bigl|Re\int f(z_{Q_{m+1}})\bar{\varphi}_{\sigma,\textbf{Q}_{m}}\Bigl|\vspace{0.2cm}\\
&\leq\ds C\Bigl(\epsilon \int \tilde{V}w_{\textbf{Q}_{m}}w_{Q_{m+1}}+ \epsilon e^{-\beta\mu} \int
\sum_{j=1}^{m}e^{-\gamma|x-Q_{j}|}\tilde{V}w_{Q_{m+1}}+e^{-\beta\mu}\sum_{j=1}^{m}w(|Q_{m+1}-Q_{j}|)\vspace{0.2cm}\\
&\,\,\,\,\,\,\ds+ \epsilon \int |div\tilde{A}(x)|w_{\textbf{Q}_{m}}w_{Q_{m+1}} + \epsilon \int |\tilde{A}(x)||\nabla w_{\textbf{Q}_{m}}|w_{Q_{m+1}}+ \epsilon^{2} \int
|\tilde{A}(x)|^{2}w_{\textbf{Q}_{m}}w_{Q_{m+1}}\vspace{0.2cm}\\
&\,\,\,\,\,\,\ds+ \epsilon e^{-\beta\mu}\int |\tilde{A}(x)|\sum_{j=1}^{m}e^{-\gamma|x-Q_{j}|}w_{Q_{m+1}} + \epsilon e^{-\beta\mu}\int
|\tilde{A}(x)|\sum_{j=1}^{m}e^{-\gamma|x-Q_{j}|}w_{Q_{m+1}}\vspace{0.2cm}\\
&\,\,\,\,\,\,\ds+ \epsilon e^{-\beta\mu}\int |\tilde{A}(x)|\sum_{j=1}^{m}e^{-\gamma|x-Q_{j}|}|\nabla w_{Q_{m+1}}|+ \epsilon^{2} e^{-\beta\mu}\int
|\tilde{A}(x)|^{2}\sum_{j=1}^{m}e^{-\gamma|x-Q_{j}|}w_{Q_{m+1}}\Bigl).
\end{array}
\end{equation}
Hence by Lemma \ref{lem3.1}, we have
\begin{equation}\label{4.21}
\begin{array}{ll}
&\ds Re\int f(z_{Q_{m+1}})\bar{u}_{\textbf{Q}_{m}}=Re\int f(z_{Q_{m+1}})(\bar{z}_{\textbf{Q}_{m}}+\bar{\varphi}_{\sigma,\textbf{Q}_{m}})\vspace{0.2cm}\\
&\geq\ds\frac{1}{4}\vartheta\sum_{j=1}^{m}w(|Q_{m+1}-Q_{j}|)\vspace{0.2cm}\\
&\quad\ds+ O\Bigl(\epsilon \int \tilde{V}w_{\textbf{Q}_{m}}w_{Q_{m+1}}+ \epsilon e^{-\beta\mu} \int
\sum_{j=1}^{m}e^{-\gamma|x-Q_{j}|}\tilde{V}w_{Q_{m+1}}+e^{-\beta\mu}\sum_{j=1}^{m}w(|Q_{m+1}-Q_{j}|) \vspace{0.2cm}\\
&\quad\ds+ \epsilon \int |div\tilde{A}(x)|w_{\textbf{Q}_{m}}w_{Q_{m+1}} + \epsilon \int |\tilde{A}(x)||\nabla w_{\textbf{Q}_{m}}|w_{Q_{m+1}}+ \epsilon^{2} \int
|\tilde{A}(x)|^{2}w_{\textbf{Q}_{m}}w_{Q_{m+1}}\vspace{0.2cm}\\
&\quad\ds+ \epsilon e^{-\beta\mu}\int |\tilde{A}(x)|\sum_{j=1}^{m}e^{-\gamma|x-Q_{j}|}w_{Q_{m+1}} + \epsilon e^{-\beta\mu}\int
|\tilde{A}(x)|\sum_{j=1}^{m}e^{-\gamma|x-Q_{j}|}w_{Q_{m+1}}\vspace{0.2cm}\\
&\quad\ds+ \epsilon e^{-\beta\mu}\int |\tilde{A}(x)|\sum_{j=1}^{m}e^{-\gamma|x-Q_{j}|}|\nabla w_{Q_{m+1}}|+ \epsilon^{2} e^{-\beta\mu}\int
|\tilde{A}(x)|^{2}\sum_{j=1}^{m}e^{-\gamma|x-Q_{j}|}w_{Q_{m+1}}\Bigl).
\end{array}
\end{equation}

Combing \eqref{4.13}, \eqref{4.14}, \eqref{4.15} and \eqref{4.21}, we obtain
\begin{equation}\label{4.22}
\begin{array}{ll}
&\ds J(u_{\textbf{Q}_{m+1}})=J(u_{\textbf{Q}_{m}}+z_{Q_{m+1}}+\phi_{m+1}) \vspace{0.2cm}\\
&\leq\ds \mathcal {C}_{m}+I(z)+\frac{1}{2}\int\epsilon \tilde{V}(x)|z_{Q_{m+1}}|^{2}+\frac{1}{2}\int\epsilon^{2} |\tilde{A}(x)|^{2}|z_{Q_{m+1}}|^{2}-\frac{1}{4}\vartheta\sum_{j=1}^{m}w(|Q_{m+1}-Q_{j}|)\vspace{0.2cm}\\
&+\ds O\Bigl[\epsilon \int \tilde{V}w_{\textbf{Q}_{m}}w_{Q_{m+1}}+ \epsilon e^{-\beta\mu} \int
\sum_{j=1}^{m}e^{-\gamma|x-Q_{j}|}\tilde{V}w_{Q_{m+1}}+e^{-\beta\mu}\sum_{j=1}^{m}w(|Q_{m+1}-Q_{j}|)\vspace{0.2cm}\\
&\quad\quad\ds+ \epsilon \int |div\tilde{A}(x)|w_{\textbf{Q}_{m}}w_{Q_{m+1}} + \epsilon \int |\tilde{A}(x)||\nabla w_{\textbf{Q}_{m}}|w_{Q_{m+1}}+ \epsilon^{2} \int
|\tilde{A}(x)|^{2}w_{\textbf{Q}_{m}}w_{Q_{m+1}}\vspace{0.2cm}\\
&\quad\quad\ds+ \epsilon e^{-\beta\mu}\int |\tilde{A}(x)|\sum_{j=1}^{m}e^{-\gamma|x-Q_{j}|}w_{Q_{m+1}} + \epsilon e^{-\beta\mu}\int
|\tilde{A}(x)|\sum_{j=1}^{m}e^{-\gamma|x-Q_{j}|}w_{Q_{m+1}}\vspace{0.2cm}\\
\end{array}
\end{equation}
\begin{equation*}
\begin{array}{ll}
&\,\,\,\,\,\,\ds+ \epsilon e^{-\beta\mu}\int |\tilde{A}(x)|\sum_{j=1}^{m}e^{-\gamma|x-Q_{j}|}|\nabla w_{Q_{m+1}}|+ \epsilon^{2} e^{-\beta\mu}\int
|\tilde{A}(x)|^{2}\sum_{j=1}^{m}e^{-\gamma|x-Q_{j}|}w_{Q_{m+1}}\vspace{0.2cm}\\
&\,\,\,\,\,\,\ds+\epsilon^{4}\Bigl(\int|\tilde{A}(x)|^{2}|w_{Q_{m+1}}|\Bigl)^{2}+\epsilon^{2}\Bigl( \int|\tilde{A}(x)||\nabla
w_{Q_{m+1}}|\Bigl)^{2}+\epsilon^{2}\Bigl(\int|div\tilde{A}(x)||w_{Q_{m+1}}| \Bigl)^{2}\vspace{0.2cm}\\
&\,\,\,\,\,\,\ds+\epsilon^{2}\int|\tilde{V}(x)|^{2}|w_{Q_{m+1}}|^{2}+\epsilon^{2}\int|\tilde{A}(x)|^{2}|\nabla
w_{Q_{m+1}}|^{2}+\epsilon^{2}\int|div\tilde{A}(x)|^{2}|w_{Q_{m+1}}|^{2}\vspace{0.2cm}\\
&\,\,\,\,\,\,\ds+\epsilon^{4}\int|\tilde{A}(x)|^{4}|w_{Q_{m+1}}|^{2}+\epsilon^{2}\bigl(\int|\tilde{V}(x)||w_{Q_{m+1}}|\bigl)^{2}\Bigl].
\end{array}
\end{equation*}
By the assumption that $|Q^{(n)}_{ m+1}|\rightarrow+\infty$,
\begin{equation}\label{4.23}
\begin{array}{ll}
&\ds\epsilon \int \tilde{V}w_{\textbf{Q}_{m}}w_{Q^{(n)}_{m+1}}+ \epsilon e^{-\beta\mu} \int
\sum_{j=1}^{m}e^{-\gamma|x-Q_{j}|}\tilde{V}w_{Q^{(n)}_{m+1}}\vspace{0.2cm}\\
&\ds+ \epsilon \int |div\tilde{A}(x)|w_{\textbf{Q}_{m}}w_{Q^{(n)}_{m+1}} + \epsilon \int |\tilde{A}(x)||\nabla w_{\textbf{Q}_{m}}|w_{Q^{(n)}_{m+1}}+ \epsilon^{2}
\int
|\tilde{A}(x)|^{2}w_{\textbf{Q}_{m}}w_{Q^{(n)}_{m+1}}\vspace{0.2cm}\\
&\ds+ \epsilon e^{-\beta\mu}\int |\tilde{A}(x)|\sum_{j=1}^{m}e^{-\gamma|x-Q_{j}|}w_{Q^{(n)}_{m+1}} + \epsilon e^{-\beta\mu}\int
|\tilde{A}(x)|\sum_{j=1}^{m}e^{-\gamma|x-Q_{j}|}w_{Q^{(n)}_{m+1}}\vspace{0.2cm}\\
&\ds+ \epsilon e^{-\beta\mu}\int |\tilde{A}(x)|\sum_{j=1}^{m}e^{-\gamma|x-Q_{j}|}|\nabla w_{Q^{(n)}_{m+1}}|+ \epsilon^{2} e^{-\beta\mu}\int
|\tilde{A}(x)|^{2}\sum_{j=1}^{m}e^{-\gamma|x-Q_{j}|}w_{Q^{(n)}_{m+1}}\vspace{0.2cm}\\
&\ds+\epsilon^{4}\bigl(\int|\tilde{A}(x)|^{2}|w_{Q^{(n)}_{m+1}}|\bigl)^{2}+\epsilon^{2}\bigl( \int|\tilde{A}(x)||\nabla
w_{Q^{(n)}_{m+1}}|\bigl)^{2}+\epsilon^{2}\bigl(\int|div\tilde{A}(x)||w_{Q^{(n)}_{m+1}}| \bigl)^{2}\vspace{0.2cm}\\
&\ds+\epsilon^{2}\int|\tilde{V}(x)|^{2}|w_{Q^{(n)}_{m+1}}|^{2}+\epsilon^{2}\int|\tilde{A}(x)|^{2}|\nabla
w_{Q^{(n)}_{m+1}}|^{2}+\epsilon^{2}\int|div\tilde{A}(x)|^{2}|w_{Q^{(n)}_{m+1}}|^{2}\vspace{0.2cm}\\
&\ds+\epsilon^{4}\int|\tilde{A}(x)|^{4}|w_{Q^{(n)}_{m+1}}|^{2}+\epsilon^{2}
\Bigl(\int|\tilde{V}(x)||w_{Q^{(n)}_{m+1}}|\Bigl)^{2}\rightarrow0,~as~n\rightarrow+\infty
\end{array}
\end{equation}

and
\begin{equation}\label{4.24}
-\frac{1}{4}\vartheta\sum_{j=1}^{m}w(|Q_{m+1}-Q_{j}|)+O\Bigl(e^{-\beta\mu}\sum_{j=1}^{m}w(|Q_{m+1}-Q_{j}|)\Bigl)<0.
\end{equation}

Combining \eqref{4.12}, \eqref{4.22}, \eqref{4.23} and \eqref{4.24}, we have
\begin{equation}\label{4.25}
\mathcal {C}_{m+1}\leq\mathcal {C}_{m} + I(z).
\end{equation}

On the other hand, since by the assumption, $\mathcal {C}_{m}$ can be attained at $(\bar{Q}_{1},\ldots, \bar{Q}_{m})$, so there exists other point
$Q_{m+1}$ which is far away from the m points which be determined later. Next let's consider the solution concentrated at the points $(\bar{Q}_{1},\ldots,
\bar{Q}_{m},Q_{m+1})$, and we denote the solution by $u_{\bar{\textbf{Q}}_{m},Q_{m+1}}$, then similar with the above argument, applying the estimate
\eqref{3.36} of $\phi_{m+1}$ instead of \eqref{3.5}, we have the following estimate:

\begin{equation}\label{4.26}
\begin{array}{ll}
&\ds J(u_{\textbf{Q},Q_{m+1}})\vspace{0.2cm}\\
&=\ds J(u_{\bar{\textbf{Q}}_{m}})+I(z)+\frac{1}{2}\int\epsilon \tilde{V}(x)|z_{Q_{m+1}}|^{2}+\frac{1}{2}\int\epsilon^{2} |\tilde{A}(x)|^{2}|z_{Q_{m+1}}|^{2}-O\Big(\sum_{j=1}^{m}w(|Q_{m+1}-\bar{Q}_{j}|)\Big)\vspace{0.2cm}\\
&\,\,\,\,\,\,\ds+ O\Bigl[\epsilon \int \tilde{V}w_{\textbf{Q}_{m}}w_{Q_{m+1}}+ \epsilon e^{-\beta\mu} \int
\sum_{j=1}^{m}e^{-\gamma|x-Q_{j}|}\tilde{V}w_{Q_{m+1}}\vspace{0.2cm}\\
&\,\,\,\,\,\,\ds+ \epsilon \int |div\tilde{A}(x)|w_{\textbf{Q}_{m}}w_{Q_{m+1}} + \epsilon \int |\tilde{A}(x)||\nabla w_{\textbf{Q}_{m}}|w_{Q_{m+1}}+ \epsilon^{2} \int
|\tilde{A}(x)|^{2}w_{\textbf{Q}_{m}}w_{Q_{m+1}}\vspace{0.2cm}\\
&\,\,\,\,\,\,\ds+ \epsilon e^{-\beta\mu}\int |div\tilde{A}(x)|\sum_{j=1}^{m}e^{-\gamma|x-Q_{j}|}w_{Q_{m+1}} + \epsilon e^{-\beta\mu}\int
|\tilde{A}(x)|\sum_{j=1}^{m}e^{-\gamma|x-Q_{j}|}w_{Q_{m+1}}\vspace{0.2cm}\\
&\,\,\,\,\,\,\ds+ \epsilon^{2} e^{-\beta\mu}\int |\tilde{A}(x)|^{2}\sum_{j=1}^{m}e^{-\gamma|x-Q_{j}|}w_{Q_{m+1}}+\epsilon^{4}\int|\tilde{A}(x)|^{4}|w_{Q_{m+1}}|^{2}\vspace{0.2cm}\\
&\,\,\,\,\,\,\ds+\epsilon^{2}\int|\tilde{V}(x)|^{2}|w_{Q_{m+1}}|^{2}+\epsilon^{2}\int|\tilde{A}(x)|^{2}|\nabla
w_{Q_{m+1}}|^{2}+\epsilon^{2}\int|div\tilde{A}(x)|^{2}|z_{Q_{m+1}}|^{2}\Bigl)\vspace{0.2cm}\\
&\,\,\,\,\,\,\ds+O\Bigl(\epsilon^{4}\Bigl(\sum_{j=1}^{m+1}\Bigl(\int_{B_{\frac{\mu}{2}}(Q_{j})}|\tilde{A}(x)|^{4}|w_{Q_{m+1}}|^{2}\Bigl)^{\frac{1}{2}}\Bigl)^{2}+\epsilon^{2}\Bigl(
\sum_{j=1}^{m+1}\Bigl(\int_{B_{\frac{\mu}{2}}(Q_{j})}|\tilde{A}(x)|^{2}|\nabla
w_{Q_{m+1}}|^{2}\Bigl)^{\frac{1}{2}}\Bigl)^{2}\vspace{0.2cm}\\
&\,\,\,\,\,\,\ds+\epsilon^{2}\Bigl(\sum_{j=1}^{m+1}\Bigl(\int_{B_{\frac{\mu}{2}}(Q_{j})}|div\tilde{A}(x)|^{2}|w_{Q_{m+1}}|^{2}\Bigl)^{\frac{1}{2}}
\Bigl)^{2}+\epsilon^{2}\Bigl(\sum_{j=1}^{m+1}\Bigl(\int_{B_{\frac{\mu}{2}}(Q_{j})}|\tilde{V}(x)|^{2}|w_{Q_{m+1}}|^{2}\Bigl)^{\frac{1}{2}}\Bigl)^{2}\Bigl].
\end{array}
\end{equation}

By the asymptotic behavior of $V$, $A$ and $\nabla A$ at infinity, for some $\alpha < 1$, we choose $\gamma >\alpha$, then we can choose $Q_{m+1}$ such that
\begin{equation}\label{4.27}
|Q_{m+1}|\gg\frac{\max_{j=1}^{m}|\bar{Q}|_{j}+\ln\epsilon}{\gamma-\alpha},
\end{equation}
then we can get
\begin{equation}\label{4.28}
\begin{array}{ll}
&\ds\frac{1}{2}\int\epsilon \tilde{V}(x)|w_{Q_{m+1}}|^{2}+\frac{1}{2}\int\epsilon^{2} |\tilde{A}(x)|^{2}|w_{Q_{m+1}}|^{2}-O\Big(\sum_{j=1}^{m}w(|Q_{m+1}-\bar{Q}_{j}|)\Big)\vspace{0.2cm}\\
&+O\Bigl(\epsilon \ds\int \tilde{V}w_{\textbf{Q}_{m}}w_{Q_{m+1}}+ \epsilon e^{-\beta\mu} \int \sum_{j=1}^{m}e^{-\gamma|x-Q_{j}|}\tilde{V}w_{Q_{m+1}}+
\epsilon^{2} e^{-\beta\mu}\int
|\tilde{A}(x)|^{2}\sum_{j=1}^{m}e^{-\gamma|x-Q_{j}|}w_{Q_{m+1}}\vspace{0.2cm}\\
&+ \epsilon \ds\int |div\tilde{A}(x)|w_{\textbf{Q}_{m}}w_{Q_{m+1}} + \epsilon \int |\tilde{A}(x)||\nabla w_{\textbf{Q}_{m}}|w_{Q_{m+1}}+ \epsilon^{2} \int
|\tilde{A}(x)|^{2}w_{\textbf{Q}_{m}}w_{Q_{m+1}}\vspace{0.2cm}\\
&+ \epsilon e^{-\beta\mu}\ds\int |\tilde{A}(x)|\sum_{j=1}^{m}e^{-\gamma|x-Q_{j}|}w_{Q_{m+1}} + \epsilon e^{-\beta\mu}\int
|\tilde{A}(x)|\ds\sum_{j=1}^{m}e^{-\gamma|x-Q_{j}|}w_{Q_{m+1}}\vspace{0.2cm}\\
&+ \epsilon e^{-\beta\mu}\ds\int |\tilde{A}(x)|\sum_{j=1}^{m}e^{-\gamma|x-Q_{j}|}|\nabla w_{Q_{m+1}}|+\epsilon^{4}\int|\tilde{A}(x)|^{4}|w_{Q_{m+1}}|^{2}\vspace{0.2cm}\\
\end{array}
\end{equation}
\begin{equation*}
\begin{array}{ll}
&+\epsilon^{2}\ds\int|\tilde{V}(x)|^{2}|w_{Q_{m+1}}|^{2}+\epsilon^{2}\int|\tilde{A}(x)|^{2}|\nabla
w_{Q_{m+1}}|^{2}+\epsilon^{2}\ds\int|div\tilde{A}(x)|^{2}|w_{Q_{m+1}}|^{2}\Bigl)\vspace{0.2cm}\\
&+O\Bigl[\epsilon^{4}\Bigl(\ds\sum_{j=1}^{m+1}\Bigl(\ds\int_{B_{\frac{\mu}{2}}(Q_{j})}|\tilde{A}(x)|^{4}|w_{Q_{m+1}}|^{2}\Bigl)^{\frac{1}{2}}\Bigl)^{2}+\epsilon^{2}\Bigl(
\sum_{j=1}^{m+1}\Bigl(\int_{B_{\frac{\mu}{2}}(Q_{j})}|\tilde{A}(x)|^{2}|\nabla
w_{Q_{m+1}}|^{2}\Bigl)^{\frac{1}{2}}\Bigl)^{2}\vspace{0.2cm}\\
&+\epsilon^{2}\Bigl(\ds\sum_{j=1}^{m+1}\Bigl(\ds\int_{B_{\frac{\mu}{2}}(Q_{j})}|div\tilde{A}(x)|^{2}|w_{Q_{m+1}}|^{2}\Bigl)^{\frac{1}{2}}
\Bigl)^{2}+\epsilon^{2}\Bigl(\ds\sum_{j=1}^{m+1}\Bigl(\ds\int_{B_{\frac{\mu}{2}}(Q_{j})}|\tilde{V}(x)|^{2}|w_{Q_{m+1}}|^{2}\Bigl)^{\frac{1}{2}}\Bigl)^{2}\Bigl]\vspace{0.2cm}\\
&\geq\ds C\epsilon e^{-\alpha|Q_{m+1}|}-O\Big(\sum_{j=1}^{m}e^{-\eta|\bar{Q}_{j}-Q_{m+1}|}\Big)>0.
\end{array}
\end{equation*}
So
\begin{equation}\label{4.29}
\mathcal {C}_{m+1}\geq J(u_{\bar{\textbf{Q}}_{m},Q_{m+1}})>\mathcal {C}_{m}+I(z).
\end{equation}

It follows from \eqref{4.25} and \eqref{4.29} that
\begin{equation}\label{4.30}
\mathcal {C}_{m}+I(z)< \mathcal {C}_{m+1}\leq\mathcal {C}_{m}+I(z),
\end{equation}
which is impossible. Hence we prove that $\mathcal {C}_{m+1}$ can be attained at finite points in $\Omega_{m+1}$.

\end{proof}

Now we are in position to prove our main result.
\begin{proof}[\textbf{Proof of Theorem 1.2.}]
In order to prove our main result, we only need to prove that
the maximization problem
\begin{equation}\label{4.31}
\ds \max_{\textbf{Q}_{m}\in\bar{\Omega}_{m}}\mathcal {M}(\textbf{Q}_{m})
\end{equation}
has a solution $\textbf{Q}_{m}\in\Omega^{o}_{m}$, i.e., the interior of $\Omega_{m}$.

We prove it by an indirect method. Assume that $\bar{\textbf{Q}}_{m}=(\bar{Q}_{1},\ldots,\bar{Q}_{m})\in\partial\Omega_{m}$. Then there exists $(j,k)$ such
that $|\bar{Q}_{j}-\bar{Q}_{k}|=\mu$. Without loss of generality, we assume $(j, k) = (j, m)$. Then following the estimates \eqref{4.13}, \eqref{4.14}, \eqref{4.15} and
\eqref{4.21}, we have
\begin{equation}\label{4.32}
\begin{array}{ll}
&\ds \mathcal {C}_{m}=J(u_{\bar{\textbf{Q}}_{m}}) \vspace{0.2cm}\\
&\leq\ds \mathcal {C}_{m-1}+I(z)+\frac{\epsilon }{2}\int \tilde{V}(x)|w_{Q_{m+1}}|^{2}+\frac{1}{2}\int\epsilon^{2} |\tilde{A}(x)|^{2}|w_{Q_{m+1}}|^{2}\vspace{0.2cm}\\
&\,\,\,\,\,\,\ds-\frac{\vartheta}{4}\sum_{j=1}^{m-1}w(|\bar{Q}_{m} -\bar{Q}_{j} |)+O\Big(e^{-\beta\mu}\sum_{j=1}^{m-1}e^{-|\bar{Q}_{m}-\bar{Q}_{j}|}\Big)+O(\epsilon)\vspace{0.2cm}\\
&\leq\ds \mathcal
{C}_{m-1}+I(z)-\frac{\vartheta}{4}\sum_{j=1}^{m-1}w(|\bar{Q}_{m}-\bar{Q}_{j}|)+O\Big(e^{-\beta\mu}\sum_{j=1}^{m-1}e^{-|\overline{Q}_{m}-\bar{Q}_{j}|}\Big)+O(\epsilon).
\end{array}
\end{equation}

By the definition of the configuration set, we observe that given a ball of size $\mu$, there are at most $C_{N} := 6^{N}$ number of non-overlapping ball of
size $\mu$ surrounding this ball. Since $|\bar{Q}_{j}-\bar{Q}_{k}|=\mu$, we have
$$\sum_{j=1}^{m-1}w(|\bar{Q}_{m}-\bar{Q}_{j}|)= w(|\bar{Q}_{m} -\bar{Q}_{j} |) + \sum_{k\neq j}w(|\bar{Q}_{m} -\bar{Q}_{k} |) $$
and
\begin{equation}\label{4.33}
\begin{array}{ll}
\ds \sum_{k\neq j}w(|\bar{Q}_{m} -\bar{Q}_{k} |)&\leq Ce^{-\mu} + C_{N}e^{-\mu-\frac{\mu}{2}} + \ldots + C_{N}^{k}e^{-\mu-\frac{k\mu}{2}} \vspace{0.2cm}\\
&\leq\ds Ce^{-\mu}\sum_{j=1}^{\infty}e^{j(\ln C_{N}-\frac{\mu}{2})}
\leq\ds Ce^{-\mu},
\end{array}
\end{equation}
if $C_{N}<e^{\frac{\mu}{2}}$ , which is true for $\mu$ large enough.

Hence, we have
\begin{equation}\label{4.34}
C_{m} \leq C_{m-1} + I(z) + C\epsilon-\frac{\vartheta}{4}w(\mu) + O(e^{-(1+\beta)\mu}) < C_{m-1} + I(z),
\end{equation}
which contradicts to \eqref{4.5} in Lemma \ref{lem4.1}.
\end{proof}

\appendix
\section{{Some technical estimates}}\label{sa}
In this section, we give some technical estimates which are used before.

Denote
$$
\Lambda_{j}:=\Big\{x\Big||x-Q_{j}|\leq\frac{\mu}{2}\Big\},\,\,\,\, \Lambda=\bigcup_{j=1}^{m}\Lambda_{j}
\,\,\,
\text{and}\,\,\,
\Lambda^{C}=\R^{N}\backslash \Lambda.
$$

\begin{lem}\label{lemw}
For any $x\in \Lambda_{j}(j=1,...,m)$ and $\gamma\in(0, 1)$, we have
 \begin{equation}\label{e1}
\sum_{k=1}^{m} e^{-\gamma|x-Q_{k}|}
\leq e^{-\gamma|x-Q_{j}| }+ Ce^{-\frac{\gamma\mu}{2}}.
 \end{equation}
 For any $x\in \Lambda^c$, we have
 \begin{equation}\label{e2}
\sum_{k=1}^{m} e^{-\gamma|x-Q_{k}|}
\leq Ce^{-\frac{\gamma\mu}{2}}.
 \end{equation}

 \end{lem}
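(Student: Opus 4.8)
The plan is to deduce both estimates directly from the packing bound of Lemma~\ref{number} together with the built-in separation $|Q_k-Q_j|\geq\mu$ for points of $\Omega_m$. First I would prove \eqref{e1}. Fix $x\in\Lambda_j$, so $|x-Q_j|\leq\frac{\mu}{2}$; the index $k=j$ contributes exactly the term $e^{-\gamma|x-Q_j|}$ on the right-hand side, so it suffices to show $\sum_{k\neq j}e^{-\gamma|x-Q_k|}\leq Ce^{-\frac{\gamma\mu}{2}}$. For $k\neq j$ the triangle inequality together with $\textbf{Q}_{m}\in\Omega_m$ gives $|x-Q_k|\geq|Q_k-Q_j|-|x-Q_j|\geq\mu-\frac{\mu}{2}=\frac{\mu}{2}$, so every such $Q_k$ lies in one of the shells $\{\frac{l}{2}\mu\leq|x-Q_k|<\frac{l+1}{2}\mu\}$ with $l\geq1$. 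By \eqref{s1.1} there are at most $C_N(l+1)^{N-1}$ of them in the $l$-th shell, and each contributes at most $e^{-\frac{\gamma l\mu}{2}}$, whence
$$\sum_{k\neq j}e^{-\gamma|x-Q_k|}\leq\sum_{l=1}^{\infty}C_N(l+1)^{N-1}e^{-\frac{\gamma l\mu}{2}}=e^{-\frac{\gamma\mu}{2}}\sum_{l=1}^{\infty}C_N(l+1)^{N-1}e^{-\frac{\gamma(l-1)\mu}{2}}\leq Ce^{-\frac{\gamma\mu}{2}},$$
since for $\mu$ large enough the last series is convergent and bounded by a constant depending only on $N$ and $\gamma$. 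This yields \eqref{e1}.

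The proof of \eqref{e2} is the same computation without the exceptional term: if $x\in\Lambda^c$, then by definition $|x-Q_k|>\frac{\mu}{2}$ for every $k=1,\dots,m$, so every $Q_k$ already belongs to a shell of index $l\geq1$, and the display above, now summed over all $k=1,\dots,m$, gives $\sum_{k=1}^{m}e^{-\gamma|x-Q_k|}\leq Ce^{-\frac{\gamma\mu}{2}}$.

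I do not expect a genuine obstacle here; the only point requiring a little care is that the constant $C$ be independent of $m$ and of the particular configuration $\textbf{Q}_{m}\in\Omega_m$. This is automatic, because the counting constant $C_N=6^N$ in Lemma~\ref{number} is universal and the geometric series is summed with no reference to $m$. Note also that only $\gamma>0$ is actually used; the restriction $\gamma\in(0,1)$ is retained merely for uniformity with the rest of the paper.
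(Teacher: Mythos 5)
Your proof is correct and follows the same overall strategy as the paper's: use the separation built into $\Omega_m$ to get $|x-Q_k|\geq\mu/2$ for $k\neq j$, count the points shell by shell, and sum a rapidly decaying series to absorb everything into $Ce^{-\gamma\mu/2}$. The only divergence is in the counting step. You invoke Lemma~\ref{number} directly, giving the polynomial shell count $C_N(l+1)^{N-1}$ and hence the series $\sum_{l\geq1}C_N(l+1)^{N-1}e^{-\gamma l\mu/2}$, which converges for every $\mu>0$; the threshold $\mu\geq\mu_0$ in your argument is needed only so that the constant $C$ can be taken uniform in $\mu$, $m$, and $\textbf{Q}_m$. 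The paper's own proof of Lemma~\ref{lemw} argues more coarsely: it bounds the number of points at distance roughly $k\mu/2$ from $x$ by $C_N^k$ and sums the geometric series $\sum_{k\geq1}C_N^k e^{-k\gamma\mu/2}$, which converges only once $\gamma\mu/2>\ln C_N$. Your variant is the cleaner of the two, as it reuses a lemma already stated and proved in the body of the paper rather than redoing a looser packing estimate from scratch; both routes arrive at the same conclusion under the same effective largeness hypothesis on $\mu$. Your side remark that only $\gamma>0$ is used is also accurate: the restriction $\gamma\in(0,1)$ comes from the definition of the weighted norm in \eqref{2.1}--\eqref{2.2}, not from this lemma.
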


 \begin{proof}
 Note that given a ball of size
$\mu$, there are at most $C_{N}:=6^{N}$ number of non-overlapping ball of size $\mu$
surrounding this ball. Since $|x-Q_{j}|\leq\frac{\mu}{2},$ we have
$$
|x-Q_{k}|\geq|Q_{k}-Q_{j}|-|x-Q_{j}|\geq\frac{\mu}{2}\,\,\,\,\text{for~all}\,\,\,k\neq j.
$$
Then we have
\begin{eqnarray*}
\sum_{k=1}^{m} e^{-\gamma|x-Q_{k}|}
&=& e^{-\gamma|x-Q_{j}|}+\sum_{k\neq j}e^{-\gamma|x-Q_{k}|}
\leq  e^{-\gamma|x-Q_{j}|}+\sum_{k=1}^{\infty}C_{N}^{k}e^{-\frac{k\gamma \mu}{2}}\\
&\leq&e^{-\gamma|x-Q_{j}|}+\sum_{k=1}^{\infty}e^{k(\ln C_{N}-\frac{\gamma \mu}{2})}
\leq e^{-\gamma|x-Q_{j}|}+O(e^{-(\frac{\gamma\mu}{2}-\ln C_{N})})\\
&\leq& e^{-\gamma|x-Q_{j}|}+Ce^{-\frac{\gamma\mu}{2}},
\end{eqnarray*}
if $\ln C_{N}<\frac{\gamma\mu}{2}$, which is true for $\mu$ large enough.

The proof of \eqref{e2} is similar.
\end{proof}

\begin{proof}[\textbf{Proof of \eqref{4.9}}]
By direct computation, we have
\begin{equation*}
\begin{array}{ll}
\Bigl|\ds
Re\int\epsilon^{2}|\tilde{A}(x)|^{2}z_{Q}\bar{\varphi}_{\sigma,Q}\Bigl|&\leq\ds\epsilon^{2}\Bigl(\int|\tilde{A}(x)|^{2}|z_{Q}|^{2}\Bigl)^{\frac{1}{2}}\Bigl(\int|\tilde{A}(x)|^{2}|\bar{\varphi}_{\sigma,Q}|^{2}\Bigl)^{\frac{1}{2}}   \vspace{0.2cm}\\
&\leq\ds\delta\epsilon^{2}\int|\tilde{A}(x)|^{2}w_{Q}^{2}+C_{\delta}\epsilon^{2}\int|\tilde{A}(x)|^{2}|\varphi_{\sigma,Q}|^{2}\vspace{0.2cm}\\
&\leq\ds \delta\epsilon^{2}\int|\tilde{A}(x)|^{2}w_{Q}^{2}+C\|\varphi_{\sigma,Q}\|^{2}_{H^{1}},
\end{array}
\end{equation*}

\begin{equation*}
\begin{array}{ll}
\Bigl|\ds
Re\int\epsilon\tilde{V}(x)z_{Q}\bar{\varphi}_{\sigma,Q}\Bigl|&\leq\ds\epsilon\Bigl(\int\tilde{V}(x)|z_{Q}|^{2}\Bigl)^{\frac{1}{2}}\Bigl(\int\tilde{V}(x)|\bar{\varphi}_{\sigma,Q}|^{2}\Bigl)^{\frac{1}{2}}   \vspace{0.2cm}\\
&\leq\ds\delta\epsilon\int\tilde{V}(x)w_{Q}^{2}+C_{\delta}\epsilon\int\tilde{V}(x)|\varphi_{\sigma,Q}|^{2}\vspace{0.2cm}\\
&\leq\ds \delta\epsilon\int\tilde{V}(x)w_{Q}^{2}+C\|\varphi_{\sigma,Q}\|^{2}_{H^{1}}
\end{array}
\end{equation*}
and
\begin{equation*}
\begin{array}{ll}
&\Bigl|\ds
Re\ds\int \epsilon \tilde{A}(x) \Bigl(\frac{\nabla \varphi_{\sigma,Q}}{i}\bar{\varphi}_{\sigma,Q}-A_{0}\varphi_{\sigma,Q}\bar{\varphi}_{\sigma,Q}\Bigl) \Bigl|\vspace{0.2cm}\\
&\leq\ds\epsilon\Bigl(\int|\nabla \varphi_{\sigma,Q}|^{2}\Bigl)^{\frac{1}{2}}\Bigl(\int|\tilde{A}|^{2}|\bar{\varphi}_{\sigma,Q}|^{2}\Bigl)^{\frac{1}{2}}
\ds+C\epsilon\int|\bar{\varphi}_{\sigma,Q}|^{2}\leq\ds C\|\varphi_{\sigma,Q}\|^{2}_{H^{1}}.
\end{array}
\end{equation*}

Similarly, we can prove
\begin{equation*}
\begin{array}{ll}
\Bigl|\ds
Re\int \epsilon \tilde{A}(x) \Bigl(\frac{\nabla \varphi_{\sigma,Q}}{i}\bar{z}_{Q}-A_{0}\varphi_{\sigma,Q}\bar{z}_{Q}\Bigl) \Bigl|
&\leq\ds \delta\epsilon\int|\tilde{A}|^{2}w_{Q}^{2}+C\|\varphi_{\sigma,Q}\|^{2}_{H^{1}}.
\end{array}
\end{equation*}
Moreover, we have
\begin{equation*}
\begin{array}{ll}
&\Bigl|\ds
Re\int \frac{\epsilon}{i} \tilde{A}(x)\cdot\nabla w_{Q}\xi_{Q}\bar{\varphi}_{\sigma,Q} \Bigl| \vspace{0.2cm}\\
&= \ds\Bigl|\ds
-Re\int \frac{\epsilon}{i}w_{Q}\Bigl(div\tilde{A}\xi_{Q}\bar{\varphi}_{\sigma,Q}+i\tilde{A}\cdot A_{0}\xi_{Q}\bar{\varphi}_{\sigma,Q}+ \xi_{Q}\tilde{A}\cdot\nabla\bar{\varphi}_{\sigma,Q}  \Bigl)\Bigl| \vspace{0.2cm}\\
&\leq \ds\epsilon\int|div\tilde{A}|w_{Q}|\varphi_{\sigma,Q}|+\epsilon\int|\tilde{A}||A_{0}|w_{Q}|\varphi_{\sigma,Q}|+\epsilon\int|\tilde{A}|w_{Q}|\nabla\varphi_{\sigma,Q}|\vspace{0.2cm}\\
&\leq \ds\epsilon\Bigl(\int|div\tilde{A}|^{2}w_{Q}^{2}\Bigl)^{\frac{1}{2}}\Bigl(\int \varphi_{\sigma,Q}^{2}\Bigl)^{\frac{1}{2}}
+\epsilon\Bigl(\int|\tilde{A}|^{2}w_{Q}^{2}\Bigl)^{\frac{1}{2}}\Bigl(\int|A_{0}|^{2}\varphi_{\sigma,Q}^{2}\Bigl)^{\frac{1}{2}}
\vspace{0.2cm}\\
&\,\,\,\,\,\,\ds+\epsilon\Bigl(\int|\tilde{A}|^{2}w_{Q}^{2}\Bigl)^{\frac{1}{2}}\Bigl(\int |\nabla\varphi_{\sigma,Q}|^{2}\Bigl)^{\frac{1}{2}}\vspace{0.2cm}\\
&\leq \ds\delta\epsilon\int|div\tilde{A}|^{2}w_{Q}^{2}+C_{\delta}\epsilon\int \varphi_{\sigma,Q}^{2}
+\delta\epsilon\int|\tilde{A}|^{2}w_{Q}^{2}+C_{\delta}\epsilon\int\varphi_{\sigma,Q}^{2}
\vspace{0.2cm}\\
&\,\,\,\,\,\,\ds+\delta\epsilon\int|\tilde{A}|^{2}w_{Q}^{2}+C_{\delta}\epsilon\int |\nabla\varphi_{\sigma,Q}|^{2}\vspace{0.2cm}\\
&\leq \ds\delta\epsilon\int|div\tilde{A}|^{2}w_{Q}^{2}+\delta\epsilon\int|\tilde{A}|^{2}w_{Q}^{2}+C\|\varphi_{\sigma,Q}\|^{2}_{H^{1}},
\end{array}
\end{equation*}
where we can choose ~$\delta>0$~ small enough.
From all the estimates above, then \eqref{4.9} holds.
\end{proof}

\end{document}